\documentclass[12pt]{article}
\usepackage[utf8]{inputenc}
\usepackage{amsmath, amsthm, amssymb}
\usepackage{graphicx,fullpage,mathptmx,helvet}
\usepackage[dvipsnames]{xcolor}
\usepackage{hyperref,cleveref,cite,url}
\usepackage{mathtools}
\usepackage{complexity}
\usepackage{float, array}
\usepackage{relsize}
\usepackage{diagbox}
\usepackage[table]{xcolor}
\usepackage{tikz}
\usepackage{tkz-graph}
\usetikzlibrary{arrows,decorations.markings,bbox,patterns,arrows.meta}
\usetikzlibrary{shapes}

\DeclareMathOperator{\lcm}{lcm}

\newtheorem{theorem}{Theorem}
\newtheorem{question}[theorem]{Question}
\newtheorem{proposition}[theorem]{Proposition}
\newtheorem{observation}[theorem]{Observation}
\newtheorem{corollary}[theorem]{Corollary}
\newtheorem{lemma}[theorem]{Lemma}

\theoremstyle{definition}

\newcommand{\pattern}[3]{{#1}\diamond^{#2 - 1}{#3}}
\newcommand{\patternprime}[3]{{#1}\diamond^{#2}{#3}}
\newcommand{\subsequence}[2]{#1\langle#2\rangle}

\title{Pairs of square-free arithmetic progressions in infinite words}
\author{Thomas Delépine$^{1}$, Pascal Ochem$^{2}$, Matthieu Rosenfeld$^{1}$\thanks{This research was funded, in whole or in part, by the French National Research Agency (ANR) under grant agreement No.~ANR-24-CE48-3758-01. In accordance with the objective of open access dissemination, the authors apply a Creative Commons Attribution (CC-BY) license to any accepted article or manuscript (AAM) resulting from this submission.}\\[0.5em]
\small $^{1}$LIRMM, Université de Montpellier, CNRS, Montpellier, France\\
\small $^{2}$LIRMM, CNRS, Université de Montpellier, Montpellier, France}
\date{Mai 2026}

\begin{document}

\maketitle

\begin{abstract}

We study a question of Harju from 2019 regarding the existence of infinite ternary square-free words whose subsequences modulo $p$ and $q$ are also square-free for relatively prime integers $p$ and $q$. Among such pairs $(p, q)$ with $p$, $q \ge 3$, the only two pairs with this property known prior to this work were $(3, 11)$ and $(5, 6)$. We prove that there are finitely many pairs $(p, q)$ of relatively prime integers with $p, q \ge 3$ for which there is no infinite ternary square-free word whose subsequences modulo $p$ and $q$ are square-free. To prove our result, we combine different techniques, including the construction of words from multi-valued square-free morphisms and circular square-free morphisms. We also introduce the notion of square-free transducers, a generalization of square-free morphisms that may be of independent interest.
    
\end{abstract}

\section{Introduction}\label{chap:intro}
The avoidability of repetitions in infinite words is a central topic in combinatorics on words. A \emph{square} is a word of the form $uu$ with $u$ non-empty, and a word is \emph{square-free} if it does not contain any factor that is a square. Over a ternary alphabet, Thue showed the existence of infinite square-free words~\cite{thue1906uber} and since then, many variants and additional constraints have been considered.

In this context, Harju proposed in \cite{HARJU201995} the following generalization. Given an infinite word $w=w_0w_1w_2\ldots$ and an integer $p \geq 1$, the \emph{subsequence modulo $p$ of $w$}, denoted $\subsequence{w}{p}$, is defined as the sequence of letters at positions congruent to $0$ modulo $p$, that is,
\[
\subsequence{w}{p}=w_0w_pw_{2p}w_{3p}\ldots\,
\] 

We say that $w$ is \emph{square-free modulo $p$} if $\subsequence{w}{p}$ is square-free. Harju proved in \cite{HARJU201995} that for every $p\ge3$ there exists an infinite ternary square-free word that is also square-free modulo $p$. He also proved that this is impossible when $p=2$ and then asked the following question:
\begin{question}[Harju, 2019 \cite{HARJU201995}]\label{question:harju}
Do there exist pairs $(p,q)$ of relatively prime integers such that there exist infinite square-free words $w$ over the ternary alphabet such that $\subsequence{w}{p}$ and $\subsequence{w}{q}$ are both square-free?
\end{question}

Currie et al. proved in~\cite{ArithmeticProgressions} that there exist at least two such pairs $(p, q)$. For $(p,q)\in \{(3,11), (5,6)\}$, they constructed infinite ternary square-free words that are square-free modulo $p$ and $q$. They then asked for a characterization of the pairs $(p, q)$ for which such words exist. They noted that for the pair $(5,8)$ computer experiments suggest that no such words exist, but that this would be surprising since one might expect it to be easier to construct a word for this pair than for the pair $(5,6)$.

In this article, we make some progress toward the characterization of pairs $(p, q)$ of relatively prime integers for which there exist infinite ternary square-free words that are square-free modulo $p$ and $q$. We show that there exist infinite square-free words $w$ over the ternary alphabet such that $\subsequence{w}{p}$ and $\subsequence{w}{q}$ are both square-free for every pair $(p, q)$ of relatively prime integers (assuming without loss of generality that $p \le q$) such that
\begin{itemize}
\item  $p \geq 331$ and $q \geq 364$ (Theorem \ref{thm:pq_large}), or
\item $p\in \{13,17,18,19\}\cup \mathbb{N}_{\ge23}$ and $q\ge 19p$ (Corollary \ref{coro:construction_from_cyclic_morphisms}), or
\item $p \in \{3, 4, 5, 7, 8, 9, 10, 11, 12, 14, 15, 16, 20, 21, 22\}$ and $q$ large enough (Proposition \ref{prop:sparse_p_pairs}, see Table \ref{fig:morphisms_cone_remaining_cases} for the precise threshold values of $q$), or
\item $p=6$ and $q\ge 341$ (Proposition \ref{prop:p=6}).
\end{itemize}
\begin{figure}
    \centering
    \includegraphics[scale=0.45]{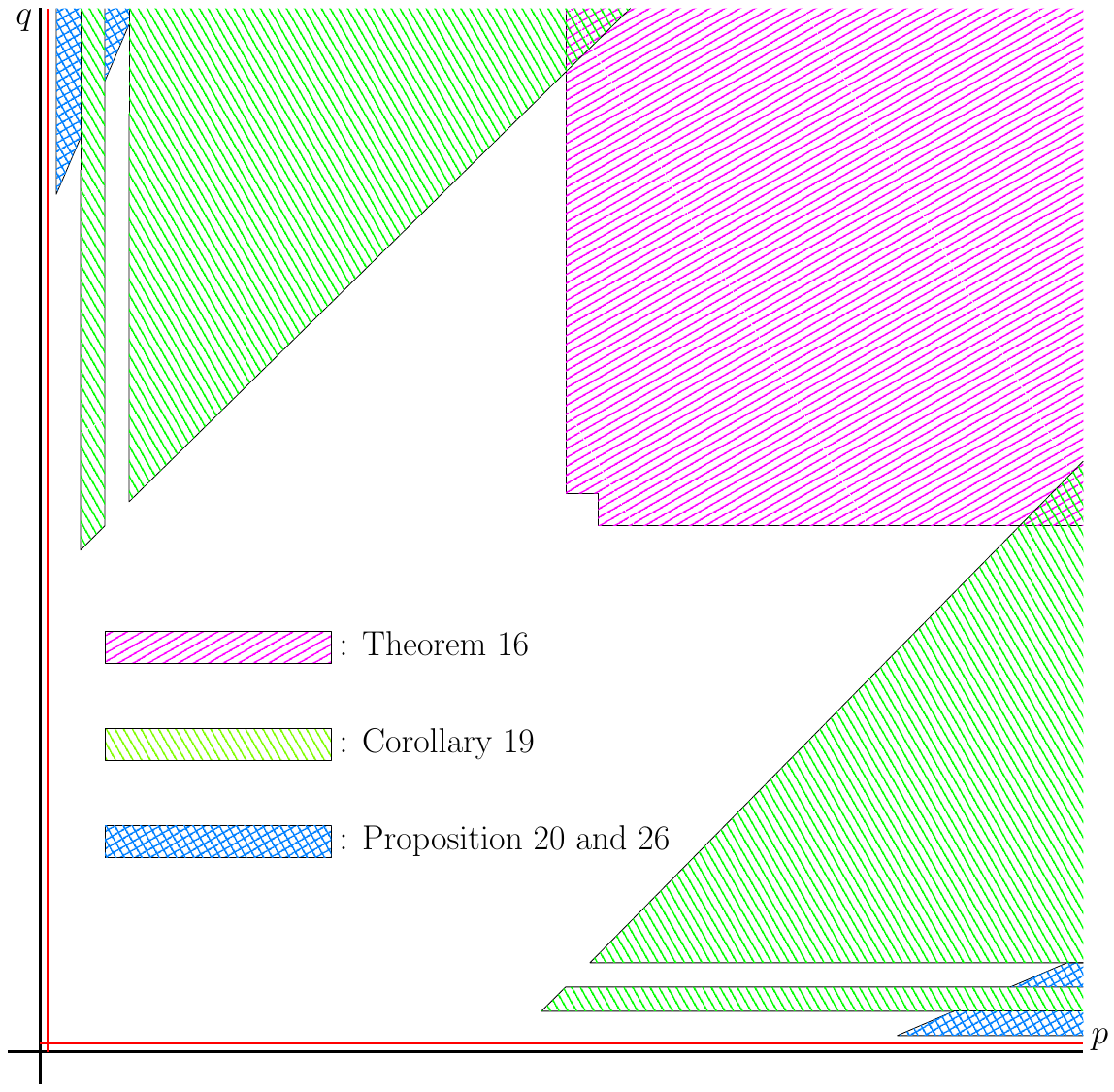}
    \caption{A (not-to-scale) schematic plot of the cases covered by Theorem \ref{thm:pq_large}, Corollary \ref{coro:construction_from_cyclic_morphisms}, Proposition \ref{prop:sparse_p_pairs} and Proposition \ref{prop:p=6}.}
    \label{fig:sketch_tiling}
\end{figure}
These four cases cover all but finitely many pairs of coprime integers $p$, $q \ge 3$, as illustrated in Figure \ref{fig:sketch_tiling}. In particular, this implies that there are only finitely many pairs of coprime integers with $p, q \ge 3$ for which it is not possible to construct such a word. We also solve the problem for all pairs with $p, q \leq 20$ other than $(5,8)$, which we leave open (see Table \ref{table:pq_leq_20}), although we conjecture that $(5, 8)$ is a negative pair. In total, we leave open $1238408$ cases, but we conjecture that almost all of them also have a positive answer.

We begin in Section \ref{sec:prel_and_palindromes} by introducing one of the main tools used throughout the paper: a particular multi-valued morphism introduced in \cite{ArithmeticProgressions} that allows us to construct square-free words while controlling the precise letters at some well-chosen positions. As an illustrative example of the method, we present an improvement on a result of Harju and Müller regarding a question of Brešar et al. on the avoidance of palindromes at prescribed positions in infinite square-free words~\cite{HARJU_palindromes,BRESAR_nonrepetitive_coloring_in_trees}.
Then, in Section \ref{chap:pqlarge}, we prove Theorem \ref{thm:pq_large}, which gives a first family of pairs of large coprime integers $(p, q)$ for which infinite ternary square-free words that are square-free modulo $p$ and $q$ exist. In Section \ref{chap:smallplargeq}, we prove Corollary \ref{coro:construction_from_cyclic_morphisms} and Proposition \ref{prop:sparse_p_pairs}. These proofs rely on the existence of circular square-free morphisms whose length is a multiple of $p$. In particular, for Corollary \ref{coro:construction_from_cyclic_morphisms}, we use morphisms constructed by Currie \cite{currie2012infinite}, and for Proposition \ref{prop:sparse_p_pairs} we provide the required morphisms. As the case $p=6$ seems to elude this technique, we cover this special case in Section \ref{sec:p_equal_6} with Proposition \ref{prop:p=6} using a different technique based on the existence of a transducer that, from any square-free word $w$, produces a square-free word $w'$ with $\subsequence{w'}{6}=w$ (so, in particular, $w'$ is square-free modulo $6$, and by controlling $w$ it also gives us some control over $w'$). In Section \ref{sec:remaining_cases}, we discuss the cases $p, q \le 20$. We end the article in Section \ref{sec:further_remarks} with a discussion of the growth rate of these languages and other related questions. Many of the proofs of the results mentioned above rely on computer verification. The programs used for these verifications are available at \cite{code}.

\section{Preliminaries with an application to forbidding palindromes}\label{sec:prel_and_palindromes}

Let $n \geq 1$ be an integer. We denote by $\Sigma_n$ the finite alphabet of size $n$ whose letters are canonically represented by the elements of $\{0, \dots, n - 1\}$. A \emph{morphism} $h$ is a function from $\Sigma_n^*$ to $\Sigma_m^*$ for some integers $n$, $m$ that is uniquely determined by its images over the source alphabet~$\Sigma_n$, and the facts that $h(\epsilon) = \epsilon$ (where $\epsilon$ denotes the empty word), and that for every non-empty word $w = aw'$, $h(w) = h(a)h(w')$. Notice that this definition can be naturally extended to infinite words. We say that a morphism $h : \Sigma_n^* \mapsto \Sigma_n^*$ is \emph{uniform} if there exists an integer $k \geq 0$ such that for every letter $a \in \Sigma_n$, $|h(a)| = k$ in which case we say that $h$ is $k$-uniform.
Let $\pi_n : \Sigma_n^* \mapsto \Sigma_n^*$ be the morphism given by $\pi_n(i) = i + 1$ whenever $i < n - 1$ and $\pi_n(n - 1) = 0$.
We say that a  morphism $h$ is \emph{circular} if for every $i \in \Sigma_n$, $$h(\pi_n(i)) = \pi_n(h(i))\,.$$
Notice that circular morphisms are uniform.
Finally, a morphism $h$ is square-free whenever for every square-free word $w$, $h(w)$ is square-free as well. 

We will use the following particularly useful morphism in our first constructions. We define $h$, a \emph{multi-valued} circular morphism over $\Sigma_3 = \{0, 1, 2\}$ as follows:
\begin{align*}
h_{23}(0)&=012102120210\mathbf{12}021201210\\
h_{24}(0)&=012102120210\mathbf{201}021201210\\
h_{25}(0)&=012102120210\mathbf{2012}021201210\\
h_{26}(0)&=012102120210\mathbf{20121}021201210
\end{align*}
and for all $i\in\{23,24,25,26\}$, $h_i(1)=\pi_3(h_i(0))$ and  $h_i(2)=\pi_3(h_i(1))$.
Given a word $w\in\Sigma_3^\omega$ and an infinite sequence $\Gamma=(\gamma_i)_{i\ge0}\in \{23,24,25,26\}^\omega$, the \emph{image of $w$ under $h$ with guiding sequence $\Gamma$} is given by 
$$h_{\Gamma}(w)=h_{\gamma_0}(w_0)h_{\gamma_1}(w_1)h_{\gamma_2}(w_2)\ldots$$
This morphism, introduced in a similar setting in \cite{ArithmeticProgressions}, will be quite handy thanks to the following property.
\begin{lemma}[Currie, Harju, Ochem, Rampersad, 2019 \cite{ArithmeticProgressions}]\label{lemma:h_is_squarefree}
For any infinite ternary square-free word $w\in\Sigma_3^\omega$, and any guiding sequence $\Gamma$, the word
$h_{\Gamma}(w)$ is square-free.
\end{lemma}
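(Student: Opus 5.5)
We prove that $h_\Gamma(w)$ is square-free by contradiction. We use the standard scheme for showing that a uniform or near-uniform morphism is square-free: a combination of local synchronization, a finite check of short factors, and a desubstitution argument.

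First we record the structure of the blocks. Write $P=012102120210$ and $S=021201210$. Then $h_i(0)=P\,m_i\,S$, where the middle $m_i\in\{12,201,2012,20121\}$. The blocks $h_i(1),h_i(2)$ are obtained by applying $\pi_3$ letter-wise. Every block therefore has length between $23$ and $26$. It begins with the length-12 prefix $\pi_3^a(P)$ and ends with the length-9 suffix $\pi_3^a(S)$, where $a$ is the letter being encoded.

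\textbf{Short squares.} We check by computer that $h_\Gamma(u)$ is square-free for every square-free ternary word $u$ of length at most some small bound (say $4$ or $5$), and for every choice of guiding indices. Since $w$ may be assumed to start with $0$ up to $\pi_3$-symmetry, this is a finite enumeration. It shows that any square $xx$ in $h_\Gamma(w)$ with $|x|$ below roughly $3\cdot 23$ cannot occur, because such a square lies inside the image of at most $4$ consecutive letters.

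\textbf{Synchronization.} This is the key combinatorial step and the main obstacle. We show that the factors $\pi_3^a(P)$ (or some suitably chosen factor of length about $12$–$14$ around the block boundary $S\,|\,P$) occur in any $h_\Gamma(w)$ only at block boundaries. This holds except for a controlled set of positions that can be listed explicitly. We verify it again by enumerating all images of square-free words of length $3$ under all $4^3$ guiding choices, and checking where an occurrence of a boundary marker $\pi_3^a(S)\pi_3^b(P)$ (with $a\ne b$) can appear. The difficulty is that the variable middles $m_i$ might create spurious occurrences. We would first try the marker consisting of the last few letters of $S$ followed by the first few letters of $P$, and enlarge it until the check succeeds.

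\textbf{Desubstitution.} Now take a long square $xx$ in $h_\Gamma(w)$ with $|x|\ge 3\cdot 23$. Each half contains at least two full boundary markers. By synchronization, the cutting points inside the first $x$ correspond, with the same offset, to cutting points in the second $x$. Hence $xx$ aligns with the block decomposition up to a shift $s$. The full blocks in the two halves then coincide. Each block determines its letter, via its first letter $a$, and also its length, so the sequences of encoded letters coincide. The partial blocks at the two ends, and at the middle junction, share enough prefix or suffix ($\pi_3^a(P)$ or $\pi_3^a(S)$) to determine the letter. This yields a square $yy$ in $w$ with $|y|\ge1$, which contradicts the square-freeness of $w$. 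The delicate point is the middle junction, where a block is split between the end of the first $x$ and the start of the second. There we use that a length-$\ge 9$ suffix or length-$\ge 12$ prefix determines $a$. If a split leaves fewer letters than this, we argue through the neighbouring boundary marker, which is forced to be at the same offset in both halves.
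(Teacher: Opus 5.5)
The paper gives no proof of this lemma; it is quoted from \cite{ArithmeticProgressions}, so there is no in-paper argument to compare with. Your scheme (a finite check for short squares, a synchronizing marker, then desubstitution) is the standard way to prove such a statement, and it can be completed here. However, the synchronization claim you lead with is false, and that is the step you yourself identify as the crux.

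The factor $\pi_3^a(P)$ does not occur only at block boundaries. Indeed, $h_{24}(2)=201021\,012102120210\,120102$ contains $P=012102120210$ at offset $6$, and by circularity $\pi_3^a(P)$ occurs at offset $6$ of $h_{24}(\pi_3^2(a))$. A $12$-letter factor meets at most two images, so one can check the twelve images and their junctions exhaustively. This check shows the offset-$6$ occurrence is the only non-boundary one. That occurrence is preceded by $\pi_3^a(1021)$. At a genuine boundary, $\pi_3^a(P)$ is instead preceded by the last four letters $\pi_3^b(1210)$ of an image of some $b\neq a$, i.e.\ by $\pi_3^a(2021)$ or $\pi_3^a(0102)$. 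Hence ``last four letters of an image followed by $\pi_3^a(P)$'' is a synchronizing marker with no exceptions. You should state and use this, and drop the clause ``except for a controlled set of positions'': your desubstitution step silently assumes there are no exceptions and never says how they would be handled.

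Two smaller repairs are needed.

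First, your short-square bound is wrong. A square with $|x|\le 68$ has length up to $136$ and can meet $7$ images, not $4$, so with your threshold the finite check needs all square-free $u$ with $|u|\le 7$. With the $16$-letter marker, every half of length $|x|\ge 41$ contains a cut point whose marker lies inside that half. Desubstitution therefore applies from $|x|\ge 41$ on, and $|u|\le 5$ suffices for the finite check.

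Second, the middle-junction issue is not a real problem, but you omitted something else. Every image $h_\gamma(a)$ begins and ends with the letter $a$, so a single letter of a partially covered block already identifies the encoded letter. What you do need, and did not state, is unique parsing, to transport cut points that lie too close to the ends of a half for the marker to fit. The four images of a given letter are pairwise incomparable in the prefix order (counting from $0$, they first differ at positions $12$, $15$ or $16$) and likewise in the suffix order. So from one synchronized pair of cut points $c$ and $c+|x|$ you can parse forwards and backwards and obtain a bijection between the cut points of the two halves. This yields a square of period at least $1$ in $w$.

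With these corrections your argument is a complete proof, modulo the finite computer checks.
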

This property is extremely useful because it gives us two ways to control the square-free word $h_{\Gamma}(w)$, by changing either $w$ or $\Gamma$.

We now illustrate how to use this morphism by improving the results on a problem relating square-free words and small palindromes.
For a word $w = w_0\dots w_n$, we denote $w_n\dots w_0$, the \emph{mirror of $w$}, by $w^R$. 
A non-empty word $w$ is a \emph{palindrome} if $w = w^R$. Observe that a word $w = w_0w_1w_2$ of length $3$ is a palindrome if and only if $w_0 = w_2$. 
In 2007, Brešar, Grytczuk, Klavžar, Niwczyk and Peterin asked in \cite{BRESAR_nonrepetitive_coloring_in_trees} whether there exists an integer $N_0$ such that for every sparse set $A$ of integers with gaps at least $N_0$, there exists an infinite ternary square-free word $w$ such that for every $a \in A$, $w_aw_{a + 1}w_{a + 2}$ is not a palindrome, i.e. $w_a \neq w_{a + 2}$. 

This question was later generalized by Currie in \cite{CURRIE_palindromes} with two kinds of constraints: for each $a \in A$, you impose either $w_a = w_{a + 2}$ or you impose $w_a \neq w_{a + 2}$. Currie proved that $N_0$ exists even in this stronger setting and gave the upper bound $N_0 \leq 400$. This bound was later improved by Harju and Müller in \cite{HARJU_palindromes} down to $N_0 \leq 67$. In this section, we improve this bound down to $N_0 \leq 30$. Our proof is similar to the one used by Harju and Müller and also serves as a simple illustration of one of the main techniques used throughout this article. First, one can observe that an infinite ternary square-free word must contain palindromes and non-palindromes of size $3$ very often.

\begin{lemma}\label{lemma:palindromes_recurrence}
    For every infinite ternary square-free word $w$, and for every integer $i \geq 0$, there exists $\Delta \leq 3$ such that $w_{i + \Delta} = w_{i + \Delta + 2}$ and $\Delta'\le1$ such that $w_{i + \Delta'} \neq w_{i + \Delta' + 2}$.
\end{lemma}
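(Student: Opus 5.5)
The plan is a direct local case analysis on short factors of $w$, using only that $w$ is square-free over $\Sigma_3$. Two basic facts drive everything. First, consecutive letters differ, so $w_j \neq w_{j+1}$ for all $j$. Second, whenever $w_j \neq w_{j+2}$, the three letters $w_j, w_{j+1}, w_{j+2}$ are pairwise distinct, since $w_j\ne w_{j+1}$ and $w_{j+1}\ne w_{j+2}$ also hold. In that case $w_jw_{j+1}w_{j+2}$ is a permutation of $\{0,1,2\}$. Throughout, the positions $j$ at which we ask whether $w_j=w_{j+2}$ are the non-palindrome/palindrome tests of length-$3$ factors.

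\textbf{Non-palindromes ($\Delta'\le 1$).} Suppose $w_i = w_{i+2}$ and $w_{i+1} = w_{i+3}$. Then $w_iw_{i+1}w_{i+2}w_{i+3} = (w_iw_{i+1})^2$ is a square, which is a contradiction. So at least one of $\Delta'=0$ or $\Delta'=1$ gives $w_{i+\Delta'}\ne w_{i+\Delta'+2}$.

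\textbf{Palindromes ($\Delta\le 3$).} Suppose for contradiction that $w_{i+\Delta}\ne w_{i+\Delta+2}$ for all $\Delta\in\{0,1,2,3\}$. By the second basic fact, every window of three consecutive letters among positions $i,\dots,i+5$ uses all three letters. It follows that $w_{j+3}=w_j$ for $j=i,i+1,i+2$: the letter $w_{j+3}$ must differ from both $w_{j+1}$ and $w_{j+2}$, and the only such letter is $w_j$. Hence $w_i\cdots w_{i+5}=(w_iw_{i+1}w_{i+2})^2$, a square, contradicting square-freeness. So some $\Delta\le 3$ works. This uses indices up to $i+5$, which exist because $w$ is infinite.

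There is no real obstacle here. The only point needing care is to make sure the indices in the palindrome case stay within the window used, so that $\Delta$ ranges exactly over $\{0,1,2,3\}$ and the square obtained is of length $6$ starting at $i$.
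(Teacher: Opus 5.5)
Your proof is correct, and it takes a different route from the paper. The paper assumes by symmetry that the word starts with $01$. It then lists the seven square-free words of length $6$ with that prefix, and in each one marks a position $\le 3$ where a length-$3$ palindrome starts and a position $\le 1$ where a non-palindrome starts.

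You argue structurally instead:
\begin{itemize}
\item Two consecutive palindromic windows force $w_i\cdots w_{i+3}$ to have period $2$.
\item Four consecutive non-palindromic windows make each window a permutation of $\Sigma_3$. This gives $w_{j+3}=w_j$ for $j=i,i+1,i+2$, so $w_i\cdots w_{i+5}$ has period $3$.
\end{itemize}
In both cases you obtain a square, which is a contradiction.

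Your argument needs no enumeration and no symmetry reduction. It also explains where the constants $1$ and $3$ come from: they are tied to the shortest squares of periods $2$ and $3$. The paper's finite check is terser. It is also in the same mechanical style as the exhaustive searches used later for longer patterns, such as Lemma~\ref{lemma:bound_28_to_see_pattern_again}, where no clean argument of this kind seems available.
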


\begin{proof}
    To prove this result, it is enough to consider every square-free word of length $6$ and to assume that $i = 0$. Moreover, since letters play symmetric roles, we can assume without loss of generality that the prefix is $01$. In what follows, we highlight the beginning of a palindrome by $\hat{a}$ and the beginning of a non-palindrome by $\bar{a}$. 
    \[\hat0\bar10201\quad\hat0\bar10210\quad\hat0\bar10212\quad\bar012\hat010\quad\bar01\hat2021\quad\bar0\hat12101\quad\bar0\hat12102\qedhere\]
\end{proof}

In other words, any set of $4$ consecutive positions of an infinite ternary square-free word must contain the beginning of a factor $abc$ and the beginning of a factor $aba$. Assume that we managed to construct an infinite ternary square-free word with a prefix respecting the conditions. The following Lemma shows how to increase the size of said prefix.

\begin{lemma}\label{lemma:palindromes_construction}
    Let $t$ be an infinite ternary square-free word, let $\Gamma = (\gamma_i)_{i \geq 0}$ be a guiding sequence, and let $N  \geq 0$. Then for every integer $N' \geq N + 30$ and for every $c \in \{\top, \bot\}$, there exists a guiding sequence $\Gamma'$ such that
    \begin{itemize}
        \item for every integer $i \leq N$, $h_{\Gamma'}(t)_i=h_\Gamma(t)_i$,
        \item  and $h_{\Gamma'}(t)_{N'} = h_{\Gamma'}(t)_{N' + 2}$ if and only if $c = \top$. 
    \end{itemize}
\end{lemma}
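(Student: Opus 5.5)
We will only modify the guiding sequence at indices whose blocks start after position $N$, so the prefix up to $N$ is preserved automatically. Write $P_j = \sum_{i<j} \gamma_i$ for the starting position of the $j$-th block $h_{\gamma_j}(t_j)$ in $h_\Gamma(t)$. Let $k$ be the index of the block containing position $N$, and keep $\gamma'_i = \gamma_i$ for all $i \le k$. Then every position $\le N$ lies in the unchanged prefix $h_{\gamma_0}(t_0)\cdots h_{\gamma_k}(t_k)$, which gives the first condition. The block $k+1$ starts at $P_{k+1} \le N+1$.

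The remaining freedom is in $\gamma'_{k+1}, \gamma'_{k+2}, \dots$, each chosen in $\{23,\dots,26\}$. Every block $h_\gamma(a)$ has the form $x\,m_\gamma\,y$, where $x = \pi_3^a(012102120210)$ has length $12$, $m_\gamma$ is one of $12, 201, 2012, 20121$ (shifted by $\pi_3^a$), and $y = \pi_3^a(021201210)$ has length $9$.

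The key observation is that the word is fixed near block boundaries but flexible in the middle. We plan to use the choice of $\gamma'_{k+1}$ (and possibly $\gamma'_{k+2}$) to shift the block boundary so that $N'$ lands at a suitable offset inside a fixed part $x$ or $y$. Within one block, $x$ and $y$ contain both palindromic and non-palindromic length-3 factors at known offsets. For example, in $012102120210$ the offset $0$ gives $012$ (non-palindrome) and the offset $2$ gives $210$ (non-palindrome). The factor $121$, a palindrome, starts at offset $1$. Also $020$ starts at offset $7$ (the letters at positions $7,8,9$ are $0,2,1$, so this needs checking), $202$ and similar factors are available, and so on.

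Concretely, the plan is as follows.
\begin{itemize}
\item Varying $\gamma'_{k+1}$ over $\{23,24,25,26\}$ shifts the start of block $k+2$ over $4$ consecutive values.
\item Varying $\gamma'_{k+2}$ as well makes the start of block $k+3$ range over $7$ consecutive values. In general, after choosing $r$ blocks, the start of the next block ranges over an interval of $3r+1$ consecutive positions.
\item Given $N' \ge N+30$, we locate a block whose $x$-part (positions $0$ to $11$ of the block) we can position so that $N'$ hits an offset with the prescribed palindrome status. A table of offsets in $x$ of both types is enough, since the achievable start positions form an interval of length at least $4$. Otherwise we fall back on offsets in $y$ or in the middle part.
\item We must also check the case where the factor at $N'$ straddles the boundary between $x$ and $m_\gamma$. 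There the middle choice of the same block can itself decide the status, because the first letter of $m_\gamma$ is $1$ for $\gamma = 23$ and $2$ otherwise.
\end{itemize}

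The main obstacle is the bookkeeping needed to show that the constant $30$ suffices uniformly, for every residue of $N'$ relative to $P_{k+1} \in [N-25, N+1]$ and for every letter $t_j$. Since the blocks are circular, the letter $t_j$ only permutes letters, and equality $w_{n} = w_{n+2}$ is invariant under $\pi_3$, so the letter does not matter. The problem is therefore a finite check: for each distance $d = N' - P_{k+1}$ between $29$ and $55$ (larger distances reduce to this range by keeping later $\gamma$'s fixed and moving to a later block), and for each $c$, we exhibit guiding values making position $d$ of the concatenated blocks have the required status. We would do this by a small case table or a short computer search over $\{23,\dots,26\}^2$ or $\{23,\dots,26\}^3$.

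Finally, the second condition involves positions $N'$ to $N'+2$, all of which lie beyond $N$, so they are determined by the modified blocks. Square-freeness needs no check here, since Lemma~\ref{lemma:h_is_squarefree} applies to any guiding sequence.
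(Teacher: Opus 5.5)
Your proposal has a genuine gap, and it sits in the bookkeeping you deferred as routine. If block $k$ contains $N$ and you freeze $\gamma_0,\dots,\gamma_k$, then $P_k\le N<P_{k+1}\le P_k+26$, so $P_{k+1}\in[N+1,N+26]$. Your ``$P_{k+1}\le N+1$'' and ``$[N-25,N+1]$'' have the inequality reversed. As a result, $d=N'-P_{k+1}$ can be as small as $4$, not $29$.

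For $4\le d\le 9$ the positions $N',N'+1,N'+2$ lie in the $12$-letter prefix $x$ of block $k+1$. That prefix is identical for all four images, so no later choice can change the status there. Concretely, take $\Gamma$ constant $26$, $N=0$, $N'=30$, $c=\top$. You freeze block $0$, so position $30$ is offset $4$ of block $1$. It carries $\pi_3^{t_1}(021)$, a non-palindrome, whatever $\gamma'_1,\gamma'_2,\dots$ are. Each $d\in\{4,\dots,9\}$ fails for one of the two values of $c$. Even $d=12$ with $c=\top$ fails: offsets $12$--$14$ of block $k+1$ read $120$ or $201$ for all four values of $\gamma'_{k+1}$. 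So freezing the block containing $N$ only works with a larger constant (at least $39$), not with $30$.

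The missing idea is that you may also modify the block containing $N$, provided $N$ falls in its common $12$-letter prefix. Take $n$ minimal with $N\le 11+\sum_{i<n}\gamma_i$. Then $P_n\le N+14$, hence $N'\ge P_n+16$. Set $\gamma'_i=\gamma_i$ for $i<n$ and $\gamma'_i=26$ for $i\ge n$. The word $h_{\Gamma'}(t)$ is square-free, so Lemma~\ref{lemma:palindromes_recurrence} gives $\Delta'\le 3$ with the required status at $N'+\Delta'$. Now replace $\gamma'_n$ by $26-\Delta'$. This leaves positions $\le N$ unchanged. For $\Delta'\ge1$ the common $9$-letter suffix of block $n$ starts at offset $17\le N'+\Delta'-P_n$, so everything from $N'+\Delta'$ onward shifts left by exactly $\Delta'$.

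This is the paper's argument, and it needs no offset table. It also avoids your claim that the letters $t_j$ do not matter. That claim holds only inside a single block: for a length-$3$ factor straddling two blocks, the status depends on $t_{j+1}-t_j \bmod 3$.
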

\begin{figure}
        \centering
        \includegraphics[scale=0.9]{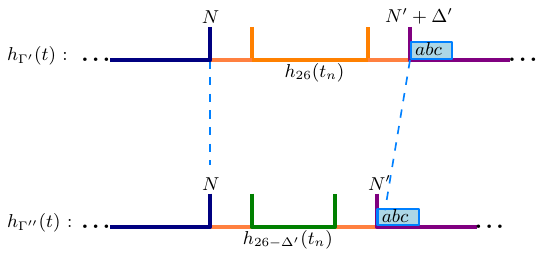}
        \caption{Illustration of the main idea behind Lemma \ref{lemma:palindromes_construction}. In order to have $abc$ at position $N'$ with either $a = c$ or $a \neq c$, we first find it at position $N' + \Delta'$ (with $\Delta'\le 3$) and then we transform an image of $h_{26}$ into either an image of $h_{23}$, $h_{24}$, $h_{25}$ or $h_{26}$ to shift $abc$ towards position $N'$.}
        \label{fig:simple_contraction_example_for_palindromes}
\end{figure}

The proof of Lemma \ref{lemma:palindromes_construction} takes advantage of the fact that $h$ is a multi-valued morphism with images of length $23$, $24$, $25$ and $26$ for each letter in $\Sigma_3$. Assume that we want a palindrome of size three to appear at position $N'$ in $h_\Gamma(t)$ for some infinite ternary square-free word $t$. We know that there exists $\Delta'\le3$ such that a palindrome appears at position $N' + \Delta'$ in $h_\Gamma(t)$. If $N' - N$ is sufficiently large, we can find a full image of $h_{26}(t_n)$ for some $n \geq 0$ between the constraint at position $N$ and the constraint at position $N'$, and by changing it to $h_{26 - \Delta'}(t_n)$ we move the palindrome to the desired position, as illustrated in Figure \ref{fig:simple_contraction_example_for_palindromes}, while preserving the prefix.

\begin{proof}
    Let $n$ be the smallest integer such that $N \leq 11 + \sum_{i = 0}^{n - 1}\gamma_i$. With this choice, $n$ is the smallest integer such that for every $i \geq n$, we can modify $\gamma_i$ without changing the prefix of size $N + 1$ of $h_\Gamma(t)$. Indeed, for every $a \in \Sigma_3$, every image of $a$ under $h$ shares a common prefix of length 12. Observe that then $N + 14 \geq \sum_{i = 0}^{n - 1}\gamma_i$. Indeed, by minimality of $n$,
    \begin{equation*}
    N+14 > 14+11 + \sum_{i = 0}^{n - 2} \gamma_i 
    \ge 26 + \sum_{i = 0}^{n - 2} \gamma_i\,
    \geq \sum_{i = 0}^{n - 1} \gamma_i.
    \end{equation*}
    We start by constructing a new guiding sequence $\Gamma' = (\gamma'_i)_{i \geq 0}$. For every integer $i \leq n - 1$, $\gamma'_i = \gamma_i$ and for every integer $i \geq n$, $\gamma'_i = 26$. Let $\Delta'$ be the smallest non-negative integer such that $h_{\Gamma'}(t)_{N' + \Delta'}h_{\Gamma'}(t)_{N' + \Delta' + 1}h_{\Gamma'}(t)_{N' + \Delta' + 2}$ is a palindrome if and only if $c = \top$. By Lemma \ref{lemma:palindromes_recurrence}, $\Delta' \leq 3$. If $\Delta' = 0$, we are done. Therefore, for the rest of the proof, we can assume that $\Delta' \geq 1$. We finally construct $\Gamma'' = (\gamma''_i)_{i \ge 0}$ from $\Gamma'$ such that $\gamma''_n = \gamma'_n - \Delta'$ and $\gamma''_i = \gamma'_i$ for all $i\neq n$. 
    
     For every $i < \sum_{j = 0}^{n - 1} \gamma_j$, $h_{\Gamma''}(t)_i = h_\Gamma(t)_i$ since for every $i \leq n - 1$, $\gamma''_i = \gamma'_i=\gamma_i$. Moreover, for every $i$ such that $\sum_{j = 0}^{n - 1} \gamma_j\le i\le N$, we have $h_{\Gamma''}(t)_i = h_\Gamma(t)_i$ since $N \leq 11 + \sum_{j = 0}^{n - 1} \gamma_j$, and for every $a \in \Sigma_3$, every image of $a$ under $h$ shares a common prefix of length 12. We then have,
    \begin{align*}
        16 + \sum_{i = 0}^{n - 1} \gamma''_i    &\leq 16 + N + 14\\
                                                &\leq N' \textrm{ since by hypothesis, } N' \geq N + 30.
    \end{align*}
    So in particular, $17 + \sum_{i = 0}^{n - 1} \gamma''_i \le N' + \Delta'$. For every $a \in \Sigma_3$, every image of $a$ under $h$ shares a common suffix of length $9 = 26 - 17$, so for every integer $i \geq N' + \Delta'$, $i \geq 17 + \sum_{j = 0}^{n - 1}\gamma'_j$ therefore $h_{\Gamma''}(t)_{i - \Delta'} = h_{\Gamma'}(t)_{i}$ so $h_{\Gamma''}(t)_{N'} = h_{\Gamma''}(t)_{N' + 2}$ if and only if $c = \top$ as desired.
\end{proof}

We can finally describe the construction procedure with the following Proposition.
\begin{proposition}\label{prop:palindromes}
    Let $(p_i)_{i \geq 0}$ be an increasing sequence of integers and let $(c_i)_{i \geq 0}$ be a sequence of elements over $\{\top, \bot\}$. If for every $i \geq 0$, $p_{i + 1} - p_i \geq 30$, then there exists an infinite ternary square-free word $w$ such that for every $i \geq 0$, $w_{p_i}w_{p_i + 1}w_{p_i + 2}$ is a palindrome if and only if $c_i = \top$.
\end{proposition}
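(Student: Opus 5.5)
We build the word by iterating Lemma \ref{lemma:palindromes_construction} and then pass to a limit. Fix an infinite ternary square-free word $t$ (e.g.\ Thue's) and the guiding sequence $\Gamma^{(0)}=(26)_{i\ge0}$. The words $h_{\Gamma}(t)$ are square-free for every guiding sequence by Lemma \ref{lemma:h_is_squarefree}, so it suffices to find a guiding sequence whose image satisfies all the constraints.

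First handle the initial constraint at $p_0$. Lemma \ref{lemma:palindromes_construction} needs some index $N\ge0$ with $N'\ge N+30$, and $p_0$ may be smaller than $30$. Two fixes are possible. One is to use the lemma's proof idea directly near the start: modify $\gamma_0$ by some $\Delta'\le3$, which shifts the suffix of $h_{\gamma_0}(t_0)$ and everything after it. This only works if $p_0$ lies in the shared suffix region. The simpler fix is to shift indices: prepend nothing, but note that we may instead prove the statement for the sequence $p_i+K$ for a large enough $K$ and then drop the first $K$ letters. Dropping a prefix preserves square-freeness, and palindromicity at $p_i$ in the suffix is palindromicity at $p_i+K$ in the original. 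So we may assume $p_0\ge 30$, and use $N=-1$ (or $N=0$ with $p_0\ge 30$, accepting that position $0$ is kept).

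Now proceed inductively. Suppose $\Gamma^{(k)}$ is a guiding sequence such that $w^{(k)}=h_{\Gamma^{(k)}}(t)$ satisfies the constraint at $p_j$ for all $j<k$. Apply Lemma \ref{lemma:palindromes_construction} with $N=p_{k-1}+2$ and $N'=p_k$ to obtain $\Gamma^{(k+1)}$. The lemma as stated needs $N'\ge N+30$, that is $p_k-p_{k-1}\ge 32$, whereas the hypothesis only gives $30$; this is the main obstacle. To resolve it, I would check in the lemma's proof whether fixing only positions $\le p_{k-1}$ suffices. The letters $w_{p_{k-1}+1}$ and $w_{p_{k-1}+2}$ must also stay unchanged, so I would re-run the counting with $N=p_{k-1}+2$. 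The slack comes from the bound $N+14\ge\sum\gamma_i$ and the common suffix of length $9$. If that fails, I would instead observe that the new word agrees with the old one on all positions up to $\sum_{i<n}\gamma_i+11$, which covers $p_{k-1}+2$ when $N$ is taken as $p_{k-1}+2$ but the minimal $n$ is chosen relative to $p_{k-1}$; the slack of $2$ can be absorbed there. The lemma then guarantees that $w^{(k+1)}$ agrees with $w^{(k)}$ on $[0,p_{k-1}+2]$, so the earlier constraints still hold, and that the constraint at $p_k$ holds.

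Finally, the prefixes stabilise: $w^{(k+1)}$ and $w^{(k)}$ agree on $[0,p_{k-1}+2]$, and $p_k\to\infty$. Hence the sequence $w^{(k)}$ converges to an infinite word $w$ that satisfies every constraint. The word $w$ is square-free because every finite prefix of $w$ is a prefix of some square-free $w^{(k)}$.
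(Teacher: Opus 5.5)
Your overall plan is the paper's: iterate Lemma~\ref{lemma:palindromes_construction} on $h_\Gamma(t)$ and pass to the limit. Your index shift by $K$ with a first application at $N=0$, and your limit argument, are both fine.

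The gap is the one you flagged yourself, and your repair does not close it. Re-running the lemma with $N=p_{k-1}+2$ gives exactly the bound $32$; the counting has no slack. The inequality $\sum_{i<n}\gamma_i\le N+14$ is attained, and for $\Delta'=1$ the shift needs $N'\ge\sum_{i<n}\gamma_i+16$. Your fallback, choosing $n$ minimal relative to $p_{k-1}$, only guarantees agreement up to position $p_{k-1}$, not $p_{k-1}+2$.

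This is a real failure, not bookkeeping. Suppose $p_{k-1}$ sits at offset $10$ or $11$ of block $n$ and $\Delta'=3$. Replacing $h_{26}$ by $h_{23}$ changes offsets $12$ and $13$, and this flips the palindrome status at $p_{k-1}$. Concretely, offsets $10,12$ read $1,2$ in $h_{26}(0)$ but $1,1$ in $h_{23}(0)$, and offsets $11,13$ read $0,0$ versus $0,2$. So as written, your argument proves the statement only for gaps $\ge 32$.

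The paper's proof has the same blind spot. It applies the lemma with $N=p_n+\delta$, which protects only positions $\le N$, and says nothing about $N+1$ and $N+2$. Your worry is therefore legitimate; it is just not resolved.

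The missing observation is that the length of the protected prefix depends on $\Delta'$. The images $h_{24}$, $h_{25}$, $h_{26}$ share the common prefix $012102120210201$ of length $15$; only $h_{23}$ deviates, already at offset $12$. Let $S_m=\sum_{i<m}\gamma_i$, and recall that all blocks after the last contracted one equal $26$. Then $\Delta'\le 3$ can be read off the current word directly.

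\begin{itemize}
\item If $\Delta'\in\{1,2\}$, take $m$ minimal with $p_{k-1}+2\le S_m+14$. Minimality gives $S_m\le p_{k-1}+13$. Contracting block $m$ to $h_{26-\Delta'}$ leaves all positions $\le S_m+14$ untouched. Moving the occurrence at $p_k+\Delta'$ to $p_k$ needs $p_k+\Delta'\ge S_m+17$, which holds as soon as $p_k\ge p_{k-1}+29$.
\item If $\Delta'=3$, take $m$ minimal with $p_{k-1}+2\le S_m+11$. Then $S_m\le p_{k-1}+16$, and the shift needs $p_k+3\ge S_m+17$, i.e.\ $p_k\ge p_{k-1}+30$.
\end{itemize}

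In both cases $m$ lies strictly after the previously contracted block, so block $m$ currently has value $26$. This is because the previous constraint was placed at offset at least $17-\Delta'_{\mathrm{old}}\ge 14$ of that block. With this case split, a gap of $30$ suffices and the rest of your argument goes through unchanged.
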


\begin{proof}
Let $t$ be an arbitrary infinite ternary square-free word, let $\Gamma = (\gamma_i)_{i \geq 0}$ be a guiding sequence, and let $\delta\ge p_0$ be such that $h_{\Gamma}(t)_\delta = h_{\Gamma}(t)_{\delta + 2}$ if and only if $c_0 = \top$. 
We will construct from $\Gamma$ a new guiding sequence $\Gamma'$ such that for every $i \geq 0$, $h_{\Gamma'}(t)_{p_i + \delta} = h_{\Gamma'}(t)_{p_i + \delta + 2}$ if and only if $c_i = \top$. We then just need to delete the prefix of size $\delta - p_0$ of $h_{\Gamma'}(t)$ to conclude.    

Let $n$ be the largest integer such that for every $i \in \{0, \dots, n\}$, $h_\Gamma(t)_{p_i + \delta} = h_\Gamma(t)_{p_i + \delta + 2}$ if and only if $c_i = \top$. In other words, $h_\Gamma(t)$ satisfies the $n+1$ first constraints. Applying Lemma \ref{lemma:palindromes_construction} with $t$, $\Gamma$, $N = p_n + \delta$, $N' = p_{n + 1} + \delta$ and $c = c_{n + 1}$, we increase the value of $n$ (by at least $+ 1$), and we indeed have that $N' \ge N + 30$ since for every $i \ge 0$, $p_{i + 1}-  p_i \ge 30$ by hypothesis. By applying this inductively, we can ensure that an arbitrarily long prefix of the word has the desired property.
\end{proof}

\section{Large $p$ and large $q$}\label{chap:pqlarge}

Intuitively, when $p$ and $q$ are large, enforcing that the subsequences modulo $p$ and $q$ are square-free results in sparse local constraints, therefore one might expect the existence of infinite ternary square-free words that are square-free modulo $p$ and square-free modulo $q$. 
In this section, we confirm this intuition.

\subsection{Necessary and sufficient condition for large $p$ and $q$}

A \emph{pattern} is a set of words over some alphabet $\Sigma$. In particular, we denote by 
$$\mathcal{P} = \patternprime{A_0}{\delta_0}{A_1} \dots \patternprime{A_{n - 1}}{\delta_{n-1}}{A_n}$$ 
the pattern containing words of length $n + 1 + \sum_{i = 0}^{n-1}\delta_i$ over $\Sigma$ such that $w \in \mathcal{P}$ if and only if for every integer $m \le n$, $w_{m+\sum_{i = 0}^{m-1}\delta_i}$ is in the alphabet $A_m$. For the sake of conciseness, in pattern notation, we denote $\{a\}$ by $a$, and $\Sigma \setminus\{ a\}$ by $\overline{a}$ when $\Sigma$ is clear from context. We say that a word $w=w_0w_1\ldots$ \emph{contains} the pattern $\mathcal{P}$ at position $i$ (or  $\mathcal{P}$ \emph{occurs} at position $i$ in $w$) whenever there exists $v \in \mathcal{P}$, $w_i\dots w_{i + |v| - 1} = v$. For instance, $\mathcal{P} = 0\diamond^20$ over $\Sigma_2$
is exactly $\{0000, 0010, 0100, 0110\}$, and the word $010010$ contains the pattern $\mathcal{P}$ at position $0$ and at position $2$.
A pattern $\mathcal{P}$ is \emph{$(\Delta, h_{26})$-recurrent} if for every infinite ternary square-free word $t$, and for every integer $i \ge 0$, there exists $j$ such that $h_{26}(t)$ contains $\mathcal{P}$ at position $j$ with $i \le j \le i + \Delta$.
For instance, if $\mathcal{P} = \{0\}$, then $\mathcal{P}$~is $(3, h_{26})$-recurrent since every factor of length $4$ of a square-free word over $\Sigma_3$ must contain $0$, $1$, and $2$.
The idea behind the definition of this notion is that if we need a recurrent pattern at a specific position, we can always find it a bit further and then shift it towards the left by ``contracting the images of the morphism". This is what we did in the proof of Lemma \ref{lemma:palindromes_construction} where we implicitly used the fact that palindromes of size 3 are $(3, h_{26})$-recurrent.

For an infinite ternary word $w$ and a pair $(p, q)$ of integers, we denote by (\ref{condAdjacent}) the following condition :
\begin{equation}\label{condAdjacent}
\forall i \ge 0,\ 
\begin{cases}
i \equiv 0 \pmod{p} \text{ and } i+1 \equiv 0 \pmod{q}, \\
\text{or} \\
i \equiv 0 \pmod{q} \text{ and } i+1 \equiv 0 \pmod{p}
\end{cases}
\Rightarrow w_i \neq w_{i+1}\,. \tag{\textasteriskcentered}
\end{equation}

Let $p$, $q \in \mathbb{N}$. If there exists an infinite ternary square-free word $w$ that is square-free modulo $p$ and square-free modulo $q$, then in particular it satisfies the condition (\ref{condAdjacent}). In this section, we shall prove the following theorem stating that when $p$ and $q$ are large, a partial converse holds.

\begin{theorem}\label{thm:sufficient_condition}
    Let $p, q \in \mathbb{N}_{\ge 331}$ and let $w$ be an infinite ternary (not necessarily square-free) word satisfying condition (\ref{condAdjacent}). 
    Then there exists an infinite ternary square-free word $w'$ such that, $\subsequence{w'}{p} = \subsequence{w}{p}$ and $\subsequence{w'}{q} = \subsequence{w}{q}$.
\end{theorem}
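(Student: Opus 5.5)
We construct $w'$ as $h_{\Gamma}(t)$, with $t$ a fixed infinite ternary square-free word. Lemma \ref{lemma:h_is_squarefree} makes every such word square-free, so the only remaining freedom is the guiding sequence $\Gamma$. We will build $\Gamma$ step by step, as in the proof of Proposition \ref{prop:palindromes}. Call a position $i$ \emph{constrained} if $p\mid i$ or $q\mid i$, and require $w'_i = w_i$ at each constrained position. Then $\subsequence{w'}{p}=\subsequence{w}{p}$ and $\subsequence{w'}{q}=\subsequence{w}{q}$ automatically. We handle constrained positions in increasing order, grouped into clusters. Since $p,q\ge 331$, the multiples of $p$ are at distance at least $331$ from one another, and likewise for $q$. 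So a cluster is either a single constrained position or a pair $\{i,i'\}$ with $p\mid i$, $q\mid i'$ (or vice versa) and $|i-i'|$ small. Two consecutive clusters are then separated by a gap of order at least $\min(p,q)/2$, which is the slack the contraction argument needs.

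The first step is to establish the recurrence facts we need, in the spirit of Lemma \ref{lemma:palindromes_recurrence}, by computer check over $h_{26}(t)$. For a single constraint $w'_N=a$ we use the pattern $\{a\}$, which is $(3,h_{26})$-recurrent. For a pair of constraints at distance $d$ we need the pattern $\patternprime{a}{d-1}{b}$ to be $(\Delta_d,h_{26})$-recurrent for some uniform bound $\Delta_d\le 3$. When $d=1$, condition (\ref{condAdjacent}) guarantees $a\neq b$, and the pattern $ab$ with $a\ne b$ is recurrent, since square-free words contain every 2-letter factor $ab$ with $a\neq b$ often. This is the only place the hypothesis is used; indeed $aa$ never occurs. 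When $d\ge 2$, nearby letters of a square-free image are not forced, and an exhaustive check of the factors of $h_{26}(t)$ should give a bound on $\Delta_d$. When $d$ is large, the two positions can instead be handled independently: we place an extra full image of $h$ between them and contract it separately.

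The second step is the contraction lemma, analogous to Lemma \ref{lemma:palindromes_construction}. Suppose the prefix up to $N$ is already correct and the next cluster starts at $N'$. Following that proof, we reset to $\gamma_i=26$ from the first modifiable block onward. We then find the pattern at $N'+\Delta'$ and shorten one block located between $N$ and $N'$ by $\Delta'\le 3$. The common prefix of length $12$ and common suffix of length $9$ of the images keep both the earlier positions and the pattern intact. For a pair cluster at large distance $d$, we perform two contractions in two distinct blocks, one located between the two positions. The first fixes $i$; the second then fixes $i'$ while preserving $i$. This requires roughly $30$ positions of slack for each contraction. Iterating over all clusters and passing to the limit, which is valid because each prefix is eventually frozen, yields $w'$.

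The main obstacle is the intermediate regime of the distance $d$ inside a cluster. Here $d$ is too large for a small computer-verified recurrence of $\patternprime{a}{d-1}{b}$, yet too small to fit a full block of length $26$ plus margins between the two constrained positions. The threshold $331$ presumably comes from balancing these two regimes together with the gaps between clusters. I would first try to verify recurrence of $\patternprime{a}{d-1}{b}$ for all $2\le d\le D$ with $D\approx 60$ by computer. For $d>D$ I would use the two-contraction argument, and then check that $\min(p,q)\ge 331$ leaves enough room on both sides of every cluster.
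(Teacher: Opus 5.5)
\paragraph{Verdict.} Your overall scheme is the paper's: take $w'=h_\Gamma(t)$, process the multiples of $p$ or $q$ in increasing order, and treat two constrained positions at small distance $d$ as one pattern $\patternprime{a}{d-1}{b}$. You also use condition (\ref{condAdjacent}) only for $d=1$ and move occurrences left by contracting blocks. There is, however, a genuine gap in the regime you yourself flag, and a computer check cannot close it.

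\paragraph{The failing case $d=26$.} Recurrence in $h_{26}(t)$ is simply false for some short gaps. Since $h_{26}$ is circular and $26$-uniform, positions $i$ and $i+26$ of $h_{26}(t)$ carry $h_{26}(t_j)_r$ and $h_{26}(t_{j+1})_r$ with $t_j\neq t_{j+1}$. These letters always differ, so $\patternprime{a}{25}{a}$ never occurs in $h_{26}(t)$, and your verification for $2\le d\le 60$ must fail at $d=26$. The case cannot be avoided: condition (\ref{condAdjacent}) says nothing about constraints $w'_i=w'_{i+26}=a$ with $p\mid i$ and $q\mid i+26$. Your two-contraction fallback does not cover it either. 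The single-letter shift of Lemma \ref{lemma:contraction_for_Delta_recurrent_patterns} is only guaranteed once the two positions are at distance at least $4+26=30$; this is exactly the paper's first case $u_{n+1}-u_n\ge 30$. You yourself place $d=26$ in the range where a block plus margins does not fit.

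\paragraph{The missing idea.} You need to \emph{construct} the pattern rather than find it. Choose the guiding values of a few blocks near the pair so that the image of a short preimage already contains $\patternprime{a}{\delta}{b}$ close to its start. This is $(\Delta,h)$-constructibility (Lemma \ref{lemma:Delta_constructability_of_A3B_and_Ageq9B_bis}), which the paper uses for $\delta=3$ and all $\delta\ge 9$. Only then shift the occurrence left by contracting earlier blocks.

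The constructed occurrence may land up to $25+\Delta$ positions past its target, so this costs extra slack. Lemma \ref{lemma:contraction_for_Delta_constructible_patterns} needs a gap of $26\lceil(\Delta+1)/3\rceil+198=302$ for $\Delta=10$. The threshold is exactly $331=302+29$, where $29$ is the largest distance inside a cluster.

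\paragraph{A secondary error.} Your uniform bound $\Delta_d\le 3$ is false even for $d=1$. In $h_{26}(0)=01210212021020121021201210$, the next occurrence of $01$ after position $1$ is at position $13$. The paper accordingly uses $\Delta=12$ for $ab$ and $\Delta=27$ for $\delta\in\{1,2,4,\dots,8\}$. Shortening a single block by at most $3$ is therefore not enough. You must contract $\lceil\Delta/3\rceil$ blocks as in Lemma \ref{lemma:contraction_for_Delta_recurrent_patterns}; the roughly $300$ positions separating clusters can absorb this.
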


So, in particular, whenever $\subsequence{w}{p}$ and $\subsequence{w}{q}$ are square-free, $w'$ is square-free, square-free modulo $p$ and square-free modulo $q$. The rest of this subsection is dedicated to the proof of Theorem \ref{thm:sufficient_condition}. Then, in Subsection \ref{sec:pairs_that_respect_the_condition}, we will study for which pairs $(p, q)$ of relatively prime integers, there exists a word $w$ that is square-free modulo $p$, square-free modulo $q$ and that respects (\ref{condAdjacent}). 

To prove Theorem \ref{thm:sufficient_condition}, we start from $h_{\Gamma}(t)$ where $t$ is any infinite ternary square-free word and $\Gamma$ is the guiding sequence of constant value $26$, and we will modify $\Gamma$ so that $\subsequence{h_\Gamma(t)}{p} = \subsequence{w}{p}$ and $\subsequence{h_\Gamma(t)}{q} = \subsequence{w}{q}$. With this technique, the subsequences modulo $p$ and $q$ are fixed and can be seen as additional constraints asking for some letters at some positions. Since $p$ and $q$ are large, these constraints are sparse, but sometimes, when the subsequences modulo $p$ and $q$ almost coincide, these constraints are actually patterns of shape $\patternprime{a}{\delta}{b}$ with $\delta$ small. In what follows, we will prove some lemmas that will help us deal with such pairs of constraints.

\begin{lemma}\label{lemma:contraction_for_Delta_recurrent_patterns}
    Let $t$ be an infinite ternary square-free word, let $\Gamma = (\gamma_i)_{i \ge 0}$ be a guiding sequence, let $\mathcal{P}$ be a $(\Delta, h_{26})$-recurrent pattern, and let $N \ge 0$. Then for every integer $N' \ge N + 4 + 26\lceil \Delta/3\rceil$, there exists a guiding sequence $\Gamma'$ such that
    \begin{itemize}
        \item for every integer $i \le N$, $h_{\Gamma'}(t)_i = h_\Gamma(t)_i$, and
        \item $\mathcal{P}$ occurs at position $N'$ in $h_{\Gamma'}(t)$.
    \end{itemize} 
\end{lemma}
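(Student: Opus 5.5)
We follow the strategy of Lemma \ref{lemma:palindromes_construction}, but now we may have to contract by up to $\Delta$ positions. A single image can shrink by at most $3$ (from $26$ to $23$), so we spread the contraction over $\lceil \Delta/3\rceil$ consecutive images.

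\textbf{Step 1: freeze the prefix.} As in the proof of Lemma \ref{lemma:palindromes_construction}, let $n$ be the smallest integer with $N \le 11 + \sum_{i=0}^{n-1}\gamma_i$. Since all images of a letter under $h$ share a common prefix of length $12$, every $\gamma_i$ with $i\ge n$ can be modified without changing the prefix of length $N+1$. The same computation as before gives $\sum_{i=0}^{n-1}\gamma_i \le N+14$.

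\textbf{Step 2: find the pattern beyond the contraction zone.} Define $\Gamma'$ by $\gamma'_i=\gamma_i$ for $i<n$ and $\gamma'_i=26$ for $i\ge n$. Write $S=\sum_{i<n}\gamma_i$ and $k=\lceil\Delta/3\rceil$. Beyond position $S$, the word $h_{\Gamma'}(t)$ coincides with $h_{26}(t_nt_{n+1}\cdots)$, and the suffix $t_nt_{n+1}\cdots$ is again an infinite ternary square-free word.

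Since $\mathcal{P}$ is $(\Delta,h_{26})$-recurrent, $h_{26}(t_n t_{n+1}\cdots)$ contains $\mathcal{P}$ at some relative position $j$ with $N'-S\le j\le N'-S+\Delta$. Setting $\Delta'=S+j-N'\in[0,\Delta]$, the pattern occurs at position $N'+\Delta'$ in $h_{\Gamma'}(t)$.

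Here we use that $N'-S\ge 0$. Indeed, $N'-S \ge N+4+26k-N-14 = 26k-10 \ge 0$ whenever $k\ge1$. If $\Delta=0$, we can still use recurrence with the window starting at $N'-S$, provided this is nonnegative; this edge case needs a separate check, but it is trivial when $N'\ge S$.

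\textbf{Step 3: contract.} Write $\Delta'=\sum_{m=0}^{k-1} d_m$ with each $d_m\in\{0,1,2,3\}$. Define $\Gamma''$ by $\gamma''_{n+m}=26-d_m$ for $m<k$, and keep all other entries as in $\Gamma'$. The prefix up to $N$ is preserved by Step 1.

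Every image under $h$ is $012102120210$, then a middle part, then $021201210$, i.e. a fixed $12$-prefix and $9$-suffix. Everything after position $S+\sum_{m<k}\gamma''_{n+m}-9$ in $h_{\Gamma''}(t)$ is the corresponding part of $h_{\Gamma'}(t)$ shifted left by $\Delta'$. So it suffices that the occurrence of $\mathcal{P}$, which starts at $N'+\Delta'$ in $h_{\Gamma'}(t)$, starts at or after position $S+26k-9$ there.

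This holds because
\[
N'+\Delta' \ge N'\ge N+4+26k \ge S-14+4+26k = S+26k-10.
\]
This is off by one. The fix is to use the sharper inequality from the minimality of $n$: we have the strict inequality $N>11+\sum_{i<n-1}\gamma_i$, which gives $S\le N+14$ but with the $-10$ adjusted. Alternatively, one can note that the common suffix region actually starts at offset $17$ within an image of length $26$, and that the mid-parts of $h_{23}$ through $h_{26}$ also agree on their first letter. Then the occurrence at $N'+\Delta'$ in $h_{\Gamma'}(t)$ lands at $N'$ in $h_{\Gamma''}(t)$.

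\textbf{Main obstacle.} The main difficulty is exactly this boundary bookkeeping: verifying that the constant $4$ in the hypothesis suffices given the $12$-letter common prefix and the $9$-letter common suffix. I would redo the chain of inequalities from Lemma \ref{lemma:palindromes_construction} carefully, replacing $16$ there by $26k-10$, and if necessary exploit the extra letter shared by the middle blocks.
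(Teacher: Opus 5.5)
Your construction is the paper's. You freeze the images up to index $n-1$, reset the rest to $26$, locate $\mathcal{P}$ at $N'+\Delta'$ with $0\le\Delta'\le\Delta$, and contract $h_{26}$-images to shift it back. Your justification of Step 2 via the suffix $t_nt_{n+1}\cdots$ is more careful than the paper's, which applies recurrence directly to $h_{\Gamma'}(t)$.

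However, the proof is not finished. It ends on an off-by-one that you do not close, and both remedies you suggest fail:
\begin{itemize}
\item The bound $S\le N+14$ is exactly what minimality of $n$ gives, and it is attained: take $\gamma_{n-1}=26$ and $N=12+\sum_{i<n-1}\gamma_i$. There is nothing to sharpen.
\item The middle blocks $12$, $201$, $2012$, $20121$ of $h_{23},\dots,h_{26}$ begin with $1,2,2,2$ and end with $2,1,2,1$. So the common prefix has length exactly $12$ and the common suffix exactly $9$; there is no extra shared letter to exploit.
\end{itemize}

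The missing step is that you discarded $\Delta'$ in $N'+\Delta'\ge N'$. Split into two cases.
\begin{itemize}
\item If $\Delta'=0$, all $d_m$ are $0$, so $\Gamma''=\Gamma'$ and $\mathcal{P}$ already occurs at $N'$; nothing needs to be shifted.
\item If $\Delta'\ge 1$, then $N'+\Delta'\ge N'+1\ge S+26k-9$, which is exactly the threshold you derived, and your argument goes through.
\end{itemize}
This is how the paper concludes. It first disposes of $\Delta'=0$, then turns $16+\sum_{i\le n+\lceil\Delta'/3\rceil-2}\gamma'_i\le N'$ into $17+\sum_{i\le n+\lceil\Delta'/3\rceil-2}\gamma'_i\le N'+\Delta'$. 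The paper contracts only the first $\lceil\Delta'/3\rceil$ images greedily ($23,\dots,23,26-r$) rather than $\lceil\Delta/3\rceil$ images with an arbitrary split, but that difference is immaterial.

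You also left open the degenerate case $\Delta=0$, which the paper never needs and does not treat separately. It closes without any contraction:
\begin{itemize}
\item Minimality of $n$ gives $N'\ge N+4\ge 16+\sum_{i<n-1}\gamma_i$.
\item For $\gamma<26$, the common $9$-suffix of $h_\gamma$ starts at offset $\gamma-9\le 16$.
\item Hence, from position $N'$ onward, $h_{\Gamma'}(t)$ is a suffix of $h_{26}(t_{n-1}t_n\cdots)$; if $n=0$, it is simply $h_{26}(t)$.
\item Recurrence then applies directly.
\end{itemize}
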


In the proof of this lemma, we use the fact that the multi-valued morphism $h$ has images of length $23$, $24$, $25$, $26$ for each letter in $\Sigma_3$. The idea is that by taking $N' - N$ sufficiently large, we will be able to contract enough images of $h_{26}$ into images of $h_{23}$, between position $N$ and position $N'$, to shift $\mathcal{P}$ towards the left at position $N'$, as illustrated in Figure \ref{fig:simple_contraction_example}. Each such contracted image can shift the position of $\mathcal{P}$ by $3$ towards the left, so we need to contract roughly $\Delta/3+O(1)$ images, and if the gap between two consecutive constraints is larger than $26\Delta/3 + O(1)$, we can ensure that we have enough images of $h_{26}$ to contract without changing the prefix of size $N$. Controlling the precise value of the $O(1)$ term is the only technical detail remaining.
\begin{figure}
        \centering
        \includegraphics[scale=0.9]{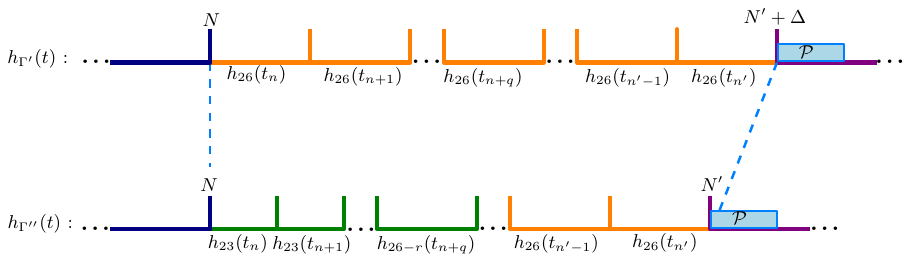}
        \caption{In order to have the pattern $\mathcal{P}$ at position $N'$, we first find it at position $N' + \Delta$ and then we transform some images of $h_{26}$ into images of $h_{23}$, $h_{24}$, $h_{25}$ or $h_{26}$ to shift $\mathcal{P}$ towards the left up to position $N'$.}
        \label{fig:simple_contraction_example}
\end{figure}
\begin{proof}
    Let $n$ be the smallest integer such that $N \le 11 + \sum_{i = 0}^{n - 1} \gamma_i$. Observe that then $N + 14 \ge \sum_{i = 0}^{n - 1} \gamma_i$.
    Indeed, by minimality of $n$, 
    \begin{equation*}
        N+14 > 14+11 + \sum_{i = 0}^{n - 2} \gamma_i    \ge 26 + \sum_{i = 0}^{n - 2} \gamma_i            
        \ge \sum_{i = 0}^{n - 1} \gamma_i\,.
    \end{equation*}
    
    We let $\Gamma' = (\gamma'_i)_{i \ge 0}$ be the guiding sequence such that for every integer $i \le n - 1$, $\gamma'_i = \gamma_i$ and for every integer $i \ge n$, $\gamma'_i = 26$. 
    Let $\Delta'$ be the smallest non-negative integer such that $h_{\Gamma'}(t)$ contains the pattern $\mathcal{P}$ at position $N' + \Delta'$, so by hypothesis, $\Delta' \le \Delta$ since $\mathcal{P}$ is $(\Delta, h_{26})$-recurrent. If $\Delta' = 0$, then $h_{\Gamma'}(t)$ already contains $\mathcal{P}$ at position $N'$ and we are done. Therefore, for the rest of the proof, we can assume that $\Delta' \ge 1$. Let $q$ and $r$ be the quotient and the remainder in the Euclidean division of $\Delta'$ by $3$, so $\Delta' = 3q + r$. We now let $\Gamma''= (\gamma''_i)_{i \ge 0}$ be the guiding sequence such that $\gamma''_i = \gamma'_i$ except for $\gamma''_n = \dots = \gamma''_{n + q - 1} = 23$ and $\gamma''_{n + q} = 26 - r$. 
    
    For every $i < \sum_{j = 0}^{n - 1}\gamma_j$, $h_{\Gamma''}(t)_i = h_\Gamma(t)_i$ since for every $i \le n - 1$, $\gamma''_i = \gamma_i$. Moreover, for every $i\in\{\sum_{j = 0}^{n - 1}\gamma_j, \dots, N\}$, $h_{\Gamma''}(t)_i = h_\Gamma(t)_i$ since $N \le 11 + \sum_{j = 0}^{n - 1}\gamma_j$, and for every $a \in \Sigma_3$, every image of $a$ under $h$ shares a common prefix of length $12$. 
    
    Observe that when constructing $\Gamma''$ from $\Gamma'$, we have only changed the values of $\gamma'_n, \dots, \gamma'_{n + \lceil\Delta'/3\rceil - 1}$ since when $\Delta' \equiv 0 \pmod3$, $\gamma''_{n + q} = 26 = \gamma'_{n + q}$. We thus have,
    \begin{align*}
        16 + \sum_{i = 0}^{n + \lceil\Delta'/3\rceil - 2} \gamma'_i &= 16 + \sum_{i = 0}^{n - 1} \gamma'_i + \sum_{i = n}^{n + \lceil\Delta'/3\rceil - 2} \gamma'_i\\
        &\le 16 + N + 14 + 26(\lceil\Delta'/3\rceil - 1)\\
        &= 4 + N + 26\lceil\Delta'/3\rceil\\
        &\le 4 + N + 26\lceil\Delta/3\rceil\textrm{ since } \Delta' \le \Delta\\
        &\le N' \textrm{ by hypothesis}.
    \end{align*}
    So, in particular, $17 + \sum_{i = 0}^{n + \lceil\Delta'/3\rceil - 2} \gamma'_i \le N' + \Delta'$.
    For every $a \in \Sigma_3$, every image of $a$ under $h$ shares a common suffix of length $9 = 26 - 17$, so for every integer $i \ge N' + \Delta'$, $i \ge 17 + \sum_{j = 0}^{n + \lceil\Delta'/3\rceil - 2} \gamma'_j$ therefore $h_{\Gamma''}(t)_{i - \Delta'} = h_{\Gamma'}(t)_{i}$ so $h_{\Gamma''}(t)$ contains the pattern $\mathcal{P}$ at position $N'$ as desired. 
\end{proof}

The following lemma lists some recurrent patterns that we will later use with Lemma \ref{lemma:contraction_for_Delta_recurrent_patterns}.

\begin{lemma}\label{lemma:bound_28_to_see_pattern_again}
    For every letter $a$, $b \in \Sigma_3$ such that $a \neq b$, the pattern $ab$ is $(12, h_{26})$-recurrent.
    Moreover, for every $\delta \in \{1, 2, 4, 5, 6, 7, 8\}$ and for every letter $a$, $b \in \Sigma_3$, the pattern  $\patternprime{a}{\delta}{b}$ is $(27, h_{26})$-recurrent.  
\end{lemma}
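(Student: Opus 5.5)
We reduce the statement to a finite computation on $h_{26}$-images. The key observation is that $h_{26}(t)$ is a concatenation of blocks $h_{26}(t_k)$ of length $26$ starting at positions multiple of $26$. Any window of positions $[i, i+\Delta+\ell]$, where $\ell$ is the length of the pattern, is contained in the images of a bounded factor of $t$. Concretely, if $\Delta + \ell \le 26m - 25$ for a suitable $m$, the window lies inside $h_{26}(u)$ for some factor $u$ of $t$ of length $m+1$, and the relative offset of $i$ inside the first block ranges over $\{0,\dots,25\}$. Since $t$ is square-free, $u$ is a square-free ternary word of length $m+1$. It therefore suffices to check the following: for every square-free word $u$ of length $m+1$, every offset $s \in \{0,\dots,25\}$, and every pattern in question, the pattern occurs in $h_{26}(u)$ at some position $j$ with $s \le j \le s+\Delta$. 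We should restrict to words $u$ extendable to infinite square-free words, but checking all square-free words of that length is a stronger and still finite condition, so it suffices.

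For the first claim we take $\Delta=12$ and pattern length $2$, so the window has length $15$ and at most two consecutive blocks are needed; $u$ ranges over square-free words of length $2$. For the second claim we take $\Delta = 27$ and pattern length at most $10$ (for $\delta=8$), so the window has length at most $38$ and spans at most three blocks; $u$ ranges over square-free words of length $3$. Because $h$ is circular ($h_{26}(\pi_3(a)) = \pi_3(h_{26}(a))$) and the set of patterns is closed under applying $\pi_3$ to both letters, we may moreover assume by symmetry that $u$ starts with $0$. This leaves only a handful of words $u$ ($01$ and $02$ in the first case; $010$, $012$, $020$, $021$ in the second), times $26$ offsets, times $6$ (respectively $9\cdot 7$) patterns. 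We then verify each case either by hand, by scanning the explicit $52$- or $78$-letter word $h_{26}(u)$, or by a short program, as the paper does elsewhere.

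The main obstacle is that the check genuinely has to succeed. In particular, the pattern $\patternprime{a}{\delta}{b}$ with $a=b$ must recur within $27$ positions for each listed $\delta$, and this is where the excluded value $\delta=3$ presumably fails (likely a whole block avoids, e.g., a distance-$4$ repetition of some letter for too long). We will therefore organize the verification by computing, for each $\delta$ and each pair $(a,b)$, the maximal gap between consecutive occurrences of $\patternprime{a}{\delta}{b}$ in the words $h_{26}(u)$. The required bound is then simply that this maximal gap, taken with the boundary offsets, is at most $\Delta$, namely $12$ for $ab$ with $a\neq b$ and $27$ for the others. By hand, the cheapest route is to write out $h_{26}(012)$ and $h_{26}(010)$ and read off the positions of each pattern; the cases $020$ and $021$ follow from these by the mirror-like symmetry swapping $1$ and $2$, if $h_{26}$ respects it, and otherwise they must be checked directly.
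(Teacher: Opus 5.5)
Your proposal is correct and follows the paper's proof, which is likewise a finite exhaustive check: the paper enumerates all factors of length $40$ of $h_{26}(t)$ for square-free $t$ and verifies that each pattern occurs at position at most $12$ (resp.\ $27$), which is the same check as your scan of $h_{26}(u)$ over square-free $u$ of length $2$ or $3$ at all $26$ offsets, with your use of circularity to assume $u_0=0$ as a harmless refinement. Two minor slips do not affect the argument: the containment condition should read $\Delta+\ell\le 26m+1$ rather than $\Delta+\ell\le 26m-25$ (the counts of two and three blocks that you actually use are the correct ones), and $h_{26}$ has no symmetry exchanging $1$ and $2$, so $020$ and $021$ must indeed be checked directly.
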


In order to prove Lemma \ref{lemma:bound_28_to_see_pattern_again}, we perform an exhaustive verification. We generate all possible factors of length 40 of $h_{26}(t)$ for every square-free word $t$, and we check that each pattern occurs at position at most $12$ or $27$ in every generated factor. The program computing this verification can be found in \cite[lemma\_\ref{lemma:bound_28_to_see_pattern_again}.cpp]{code}.

In what follows, we need to be able to deal with patterns of the form $\patternprime{a}{\delta}{b}$ that are not covered by Lemma \ref{lemma:bound_28_to_see_pattern_again}.
In particular, since $h_{26}$ is $26$-uniform and circular, the pattern $\patternprime{a}{25}{a}$ never appears in any image of a square-free word by $h_{26}$. Some of the other patterns that we will use are $(\Delta, h_{26})$-recurrent, but for some very large value of $\Delta$. To solve these problems, and to improve the bounds that would be worsened by these patterns, we use the following generalizations of the notion of $(\Delta, h_{26})$-recurrence and of Lemma~\ref{lemma:contraction_for_Delta_recurrent_patterns}.

A pattern $\mathcal{P}$ is \emph{$(\Delta, h)$-constructible} if there exists $k \in \mathbb{N}$ such that for every square-free word $w$ of length $k$, there exists a guiding sequence $\Gamma$ of length $k$ such that $h_\Gamma(w)$ contains $\mathcal{P}$ at position at most $\Delta$. Notice that every $(\Delta, h_{26})$-recurrent pattern is $(\Delta, h)$-constructible. With this notion, we can now first modify slightly the guiding sequence to construct the pattern at some position, and we will then, as previously, contract some images of $h_{26}$ into shorter images to shift the pattern towards the left at the desired position.
This definition leads to the following lemma. 
\begin{lemma}\label{lemma:contraction_for_Delta_constructible_patterns}
    Let $t$ be an infinite ternary square-free word, let $\Gamma = (\gamma_i)_{i \ge 0}$ be a guiding sequence and let $\Delta \geq 0$. For every integer $N \ge 0$, $N' \ge N + 26\lceil(\Delta + 1)/3\rceil + 198$, and for every $(\Delta, h)$-constructible pattern $\mathcal{P}$,  there exists a guiding sequence $\Gamma'$ such that
    \begin{itemize}
        \item for every integer $i \le N$, $h_\Gamma(t)_i = h_{\Gamma'}(t)_i$, and
        \item $h_{\Gamma'}(t)$ contains an occurrence of the pattern $\mathcal{P}$ at position $N'$.
    \end{itemize}
\end{lemma}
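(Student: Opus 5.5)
The plan is to reduce to the mechanism of Lemma~\ref{lemma:contraction_for_Delta_recurrent_patterns}, with one extra step: we first \emph{build} the pattern somewhat to the right of $N'$ by rewriting a block of the guiding sequence, and only then contract images of $h_{26}$ to move it left onto $N'$.

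\textbf{Fixing the prefix.} Let $n$ be the smallest integer with $N \le 11 + \sum_{i<n}\gamma_i$. As in the previous proofs, this gives $\sum_{i<n}\gamma_i \le N+14$. Since all images of a letter under $h$ share a common prefix of length $12$, any change to $\gamma_i$ with $i \ge n$ leaves the prefix of length $N+1$ unchanged.

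\textbf{Building the pattern.} Let $k$ be the integer from the definition of $(\Delta,h)$-constructibility of $\mathcal P$. Reset $\gamma_i=26$ for all $i\ge n$ and write $s_m=\sum_{i<m}\gamma_i$. Choose an index $m \ge n$ so that $s_m \ge N'$, taking $m$ minimal with this property. Then $s_m - N'$ is bounded by $25$ plus the slack coming from the prefix bound, so $s_m-N'$ is an absolute constant. The factor $t_m\cdots t_{m+k-1}$ is a ternary square-free word of length $k$, so by constructibility there is a guiding block $(\gamma_m,\dots,\gamma_{m+k-1})$ such that $h$ applied to this factor contains $\mathcal P$ at some position $\Delta'\le\Delta$. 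Install this block in the guiding sequence. The occurrence of $\mathcal P$ then lies at position $s_m+\Delta'$ in the full word, because concatenation preserves the factor and the letters after the block do not affect it.

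\textbf{Contracting.} It remains to shift the occurrence left by $D=s_m+\Delta'-N'\le \Delta+c$, where $c$ is the constant above. As in Lemma~\ref{lemma:contraction_for_Delta_recurrent_patterns}, write $D=3q'+r$ and replace $\gamma_n,\dots,\gamma_{n+q'-1}$ by $23$ and $\gamma_{n+q'}$ by $26-r$. All these indices must be strictly smaller than $m$, so that the constructed block is untouched and is merely translated by $-D$. This requires roughly $\lceil (\Delta+1)/3\rceil + O(1)$ full $h_{26}$-images between position $N+14$ and the start of the block. The hypothesis $N'\ge N+26\lceil(\Delta+1)/3\rceil+198$ is exactly what guarantees that $m-n$ is large enough. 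The prefix up to $N$ is preserved by the common-prefix argument. Because contraction only shortens whole images strictly before index $m$, everything from $s_m$ onward, including the occurrence of $\mathcal P$, is shifted rigidly by $-D$. In particular, unlike the recurrent case, we do not even need the common-suffix property here.

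\textbf{Main obstacle.} The delicate step is the constant bookkeeping. We must choose $m$ so that $D\ge 0$ is guaranteed, while the number of images needed to absorb $D$ still fits in the gap, and check that this gives at most $198$ beyond the $26\lceil(\Delta+1)/3\rceil$ term. My first attempt is to pick $m$ minimal with $s_m\ge N'$, which gives $D\le\Delta+25$. If this naive choice overshoots $198$, I would instead exploit the fact that only the first $\lceil(\Delta+|\mathcal P|)/23\rceil$ values of the block influence the occurrence. I would then start the block slightly earlier, at $s_m \ge N'-\Delta'$, after computing $\Delta'$ for the candidate factor, or absorb residues modulo $3$ with a single $24$/$25$ image, to trim the additive constant.
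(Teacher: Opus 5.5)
Your proof is correct and is essentially the paper's: you use the same $n$ from the common prefix, reset to $26$, install the constructible block at the first index $m$ with $s_m\ge N'$, and then contract with $23$'s and one $26-r$. The only difference is that you justify the final shift by rigidly translating everything from index $m$ onward, whereas the paper appeals to the common suffix of length $9$; your version is the cleaner way to get the constant $198$ directly.

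The naive choice of $m$ already settles the bookkeeping you flagged: $26(m-n)=s_m-s_n\ge N'-N-14\ge 26\lceil(\Delta+1)/3\rceil+184>26(\lceil(\Delta+1)/3\rceil+7)$, hence $m-n\ge\lceil(\Delta+25)/3\rceil\ge\lceil D/3\rceil$. So every modified index lies below $m$ (leave $\gamma_{n+q'}$ untouched when $r=0$), and no fallback is needed.
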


The proof of Lemma \ref{lemma:contraction_for_Delta_constructible_patterns} proceeds as for Lemma \ref{lemma:contraction_for_Delta_recurrent_patterns} with one additional step that aims at constructing the pattern.

\begin{figure}
    \centering
    \includegraphics[scale=0.9]{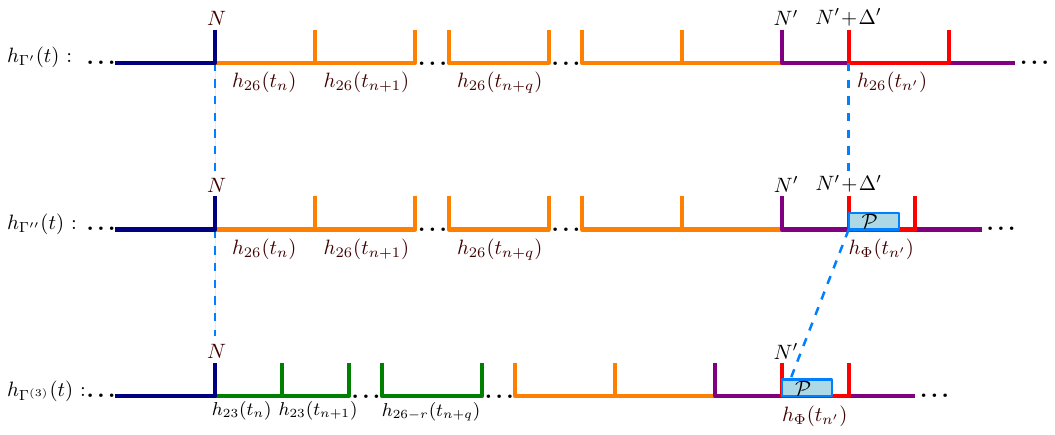}
    \caption{This figure illustrates the proof of Lemma \ref{lemma:contraction_for_Delta_constructible_patterns}. In order to have the pattern $\mathcal{P}$ at position $N'$, we first construct it at position $N' + \Delta'$ and then we transform some images of $h_{26}$ into images of $h_{23}$, $h_{24}$, $h_{25}$ or $h_{26}$ to shift $\mathcal{P}$ towards the left up to position $N'$.}
    \label{fig:pattern_a4b_proof}
\end{figure}

\begin{proof}
    Let $n$ be the smallest integer such that $N\le 11+\sum_{i=0}^{n - 1}\gamma_i$, so $N + 14 \ge \sum_{i=0}^{n - 1}\gamma_i$.
    We first let $\Gamma' = (\gamma'_i)_{i\ge 0}$ be the guiding sequence such that for every integer $i \le n - 1$, $\gamma'_i = \gamma_i$ and for every integer $i \ge n$, $\gamma'_i = 26$. Let $n'$ be the smallest integer such that $N' \le  \sum_{i = 0}^{n' - 1}\gamma'_i$, then $N' + 25  \ge \sum_{i = 0}^{n' - 1}\gamma'_i$. Indeed, by minimality of $n'$, $N' >  \sum_{i = 0}^{n' - 2}\gamma'_i$ so $N' +26 >  \sum_{i = 0}^{n' - 2}\gamma'_i + 26 = \sum_{i = 0}^{n' - 1}\gamma'_i$.
    
    Since $\mathcal{P}$ is $(\Delta, h)$-constructible, there exists a finite guiding sequence $\Phi = (\varphi_i)_{i \ge 0}$ of length $k + 1$ for some $k \geq 0$ such that $h_\Phi(t_{n'}\dots t_{n' + k})$ contains $\mathcal{P}$ at position $\Delta' \le \Delta$. We define a new guiding sequence $\Gamma'' = (\gamma''_i)_{i \ge 0}$ from $\Gamma'$ and $\Phi$ by setting $\gamma''_i = \gamma'_i$ for every $i \ge 0$ except for $i \in \{0, \dots, k\}$ where $\gamma''_{n' + i} = \varphi_i$.
    Hence, $h_{\Gamma''}(t)$ contains $\mathcal{P}$ at position 
    \begin{align*}
        \Delta' + \sum_{i=0}^{n'-1}\gamma''_i &= \Delta' + \sum_{i=0}^{n' - 1}\gamma'_i \textrm{ since for all } i \le n' - 1, \gamma''_i = \gamma'_i\\
        &\le\Delta' + N' + 25 \textrm{ since } \sum_{i=0}^{n'-1}\gamma'_i \le N' + 25\\
        &\le N' + 25 + \Delta.
    \end{align*}

     Since $\Delta' + \sum_{i=0}^{n'-1}\gamma''_i \geq N'$, let $\Delta'' \geq 0$ be the smallest integer such that $h_{\Gamma''}(t)$ contains $\mathcal{P}$ at position $N' + \Delta''$. By the previous observation, $\Delta'' \le 25 + \Delta$. If $\Delta'' = 0$, then $h_{\Gamma''}(t)$ already contains $\mathcal{P}$ at position $N'$ as desired, so we can assume that $\Delta'' \ge 1$. Let $q$ and $r$ be the quotient and the remainder in the Euclidean division of $\Delta''$ by $3$, so $\Delta'' = 3q + r$. We now construct $\Gamma^{(3)} = (\gamma^{(3)}_i)_{i \ge 0}$ from $\Gamma''$ by setting $\gamma^{(3)}_i = \gamma''_i$ except for $\gamma^{(3)}_n, \dots, \gamma^{(3)}_{n + q - 1} = 23$ and $\gamma^{(3)}_{n + q} = 26 - r$. For every $i \le \sum_{j = 0}^{n - 1}\gamma_j$, we have that $h_{\Gamma^{(3)}}(t)_i = h_\Gamma(t)_i$ since $N \le 11 + \sum_{j = 0}^{n - 1}\gamma_j$ and for every $a \in \Sigma_3$, every image of $a$ under $h$ shares a common prefix of length $12$. Observe that when constructing $\Gamma^{(3)}$ from $\Gamma''$, we have only changed the values of $\gamma''_n, \dots, \gamma''_{n + \lceil \Delta''/3\rceil - 1}$ since when $\Delta'' \equiv 0\pmod3$, $\gamma^{(3)}_{n + q} = 26 = \gamma''_{n + q}$. We thus have
    \begin{align*}
        16 + \sum_{i = 0}^{n + \lceil \Delta''/3\rceil - 2}\gamma''_i &= 16 + \sum_{i = 0}^{n - 1}\gamma''_i + \sum_{i = n}^{n + \lceil \Delta''/3\rceil - 2}\gamma''_i\\
        &\le 16 + N + 26(\lceil\Delta''/3\rceil - 1)\\
        &= N + 26\lceil\Delta''/3\rceil -10\\
        &\le N + 26\lceil (\Delta + 25)/3 \rceil -10 \textrm{ since } \Delta'' \leq \Delta + 25\\
        &=N + 26\lceil(\Delta + 1)/3\rceil + 198\\
        &\le N' \textrm{ since by hypothesis, } N' - N \ge 26\lceil(\Delta + 1)/3 \rceil + 198.
    \end{align*}
So, in particular, $17 + \sum_{i=0}^{n + \lceil\Delta/3\rceil-2}\gamma''_i \le N' + \Delta''$. For every $a \in \Sigma_3$, every image of $a$ under $h$ shares a common suffix of length $9 = 26 - 17$, so for every integer $\ell \ge N' + \Delta''$, we have $\ell\ge 17 + \sum_{i=0}^{n + \lceil\Delta/3\rceil-2}\gamma''_i$ therefore $h_{\Gamma^{(3)}}(t)_{\ell - \Delta''} = h_{\Gamma''}(t)_\ell$ so $h_{\Gamma^{(3)}}(t)$ contains the pattern $\mathcal{P}$ at position $N'$ as desired.
\end{proof}

The following Lemma provides some constructability bounds for patterns that will be useful in the proof of Theorem \ref{thm:sufficient_condition}.

\begin{lemma}\label{lemma:Delta_constructability_of_A3B_and_Ageq9B_bis}
    For every $a$, $b \in \Sigma_3$,
    \begin{itemize}
        \item the pattern $\patternprime{a}{3}{b}$ is $(10, h)$-constructible, and
        \item for every integer $\delta \ge 9$, the pattern $\patternprime{a}{\delta}{b}$ is $(8, h)$-constructible.
    \end{itemize}
\end{lemma}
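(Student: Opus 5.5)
The plan is to reduce both statements to a finite computer check, as the paper does for Lemma~\ref{lemma:bound_28_to_see_pattern_again}. By definition, a pattern is $(\Delta,h)$-constructible if there is some length $k$ such that every ternary square-free word $u$ of length $k$ has a guiding sequence $\Phi\in\{23,24,25,26\}^k$ for which $h_\Phi(u)$ contains the pattern at a position at most $\Delta$. Because $h$ is circular, we have $h_i(\pi_3(x))=\pi_3(h_i(x))$. Hence it is enough to prove the claim for $a=0$: the patterns $\patternprime{0}{\delta}{b}$ and $\patternprime{\pi_3(0)}{\delta}{\pi_3(b)}$ are exchanged by applying $\pi_3$ to $u$. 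The same symmetry lets us assume $u$ starts with $0$, which cuts the search by a factor of three.

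For the first item ($\delta=3$, so the two constrained positions are $j$ and $j+4$), I will take $k=2$ or $k=3$. For every square-free $u$ of that length, I will enumerate the $4^k$ words $h_\Phi(u)$ and check whether some $j\le 10$ gives $h_\Phi(u)_j=0$ and $h_\Phi(u)_{j+4}=b$. This is a search over at most a few hundred candidates per $(u,b)$, which is easily done by hand or by program. In fact a single letter image $h_\gamma(u_0)$ has length at least $23>14$ and contains all windows $[j,j+4]$ with $j\le 10$. So for the first item I expect $k=1$ to almost suffice, except that the prefix $012102120210$ is common to all $h_\gamma$. That is exactly why plain recurrence fails and why the pattern is only constructible with a slightly larger $\Delta$. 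If position $10$ is not enough for some $b$, I will use the variable middle block together with the choice of $\gamma_0$.

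The second item is infinitely many statements, one for each $\delta\ge 9$, so I need a uniform argument. The key observation is the following. The first letter $x=h_\Phi(u)_j$ with $j\le 8$ lies inside the common prefix of length $12$ of $h(u_0)$, so it depends only on $u_0$ and $j$. On the other hand, the letter at position $j+\delta+1\ge 10+j$ can be shifted by choosing the guiding values. Every factor of length $4$ of a square-free word contains every letter, and $h_\Gamma(u)$ is square-free by Lemma~\ref{lemma:h_is_squarefree}. Hence I will first fix $j$ with $h_\Phi(u)_j=a$ among the positions $0,\dots,3$; such a $j$ exists since the prefix $0121$ (and its rotations) contains all letters. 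Then I will choose the guiding values of the letters preceding position $j+\delta+1$ so that a $b$ lands exactly there. Shifting by $0$ to $3$ is possible by changing one guiding value from $26$ to $23,\dots,26$, and some letter $b$ occurs within any $4$ consecutive positions. The point is that the total length, and hence the window that moves, only starts beyond the first image's prefix, which is why $\delta\ge 9$ is needed.

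The main obstacle is making this uniform in $\delta$. I must make sure that the modified guiding value belongs to an image that starts at or after position $j+1$ but ends before $j+\delta+1$, so that changing it does not disturb position $j$. When $\delta$ is small (around $9$ to $35$), the target position falls inside the first or second image, and there the shift acts only on the variable middle block of $h(u_0)$. For this range I will do an explicit finite check over $u$ of length $3$ and $\delta\in\{9,\dots,60\}$. For $\delta$ beyond that range, the target lies after a complete image, and the general shifting argument above applies with $k=\lceil(\delta+12)/23\rceil+1$. Finally, I will use periodicity of the shift argument to show that the cases $\delta\le 60$ plus the generic argument cover everything.
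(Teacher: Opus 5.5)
Your overall plan matches the paper's: an exhaustive finite check for $\delta=3$ and for a bounded range of $\delta\ge 9$, plus a uniform argument for larger $\delta$. The uniform argument puts $a$ at some $j\le 2$ inside the common prefix, finds a $b$ within three positions after the target, and shortens one image by $\Delta\le 3$.

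\textbf{The symmetry step is wrong as written.} You can use $\pi_3$ only once. Applying $\pi_3$ to $u$ rotates the letters of the pattern and $u_0$ at the same time, so you may normalize $a=0$ \emph{or} $u_0=0$, but not both. Checking only $a=0$, $u_0=0$ misses the case that determines the bound. For $\patternprime{0}{3}{b}$ with $u_0=0$, every $b$ already occurs at position at most $8$. With $u_0=2$ and $b=2$, however, the only occurrence at position at most $10$ is at position exactly $10$, and it requires $\gamma_0=23$. So fix $a=0$ and enumerate all square-free $u$, or fix $u_0=0$ and enumerate all $a$.

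\textbf{Your condition on the modified image is stronger than needed.} You require the modified image to start after position $j$, and this is what pushes your finite range up to $\delta=60$. Your generic argument then covers every $\delta>60$ directly, so the closing appeal to periodicity is unnecessary; the route is valid, just wasteful.

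The paper modifies $\gamma_0$ itself. All four images of a letter share a prefix of length $12$ and a suffix of length $9$. Hence replacing $\gamma_0=26$ by $26-\Delta$ leaves position $j\le 2$ untouched and shifts every position $\ge 17$ left by exactly $\Delta$. So for $\delta\ge 16$ the pattern is even $(2,h)$-constructible by inspection. The computer check is then needed only for $\delta\in\{9,\dots,15\}$ and for $\delta=3$. For $\delta=3$, $k=1$ already suffices, since $j+4\le 14$ lies inside the first image.

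\textbf{Your explanation of the threshold is also off.} The uniform argument needs the target at position at least $17$, i.e.\ $\delta\ge 16$. The range $9\le\delta\le 15$ is covered purely by the finite verification, not by the shifting argument.
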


\begin{proof}
For patterns $\patternprime{a}{3}{b}$ and $\patternprime{a}{\delta}{b}$ with $\delta \in \{9, \dots, 15\}$ an exhaustive verification is provided in \cite[lemma\_\ref{lemma:Delta_constructability_of_A3B_and_Ageq9B_bis}.cpp]{code}. For each pattern $\mathcal{P}$ and each square-free pre-image $t$ of length $2$, we compute by exhaustive search a guiding sequence $\Gamma_{\mathcal{P}, t}$ minimizing the position of the first occurrence of $\mathcal{P}$ in $h_{\Gamma_{\mathcal{P}, t}}(t)$, and we check that this position is at most $10$ (resp. $8$).

For the case of $\delta \geq 16$, we will actually prove $(2, h)$-constructability by simple inspection of $h$. Let $t$ be a finite square-free word whose image under $h$ should contain the pattern $\patternprime{a}{\delta}{b}$ at position at most $2$ and let $\ell \le 2$ be such that $h_{26}(t_0)_\ell = a$. Let $\Gamma = (\gamma_i)_{i \in \{0,\dots, |t| - 1\}}$ be a guiding sequence with $\gamma_0 = 26$. Since $h$ is a square-free morphism and $t$ is square-free, there exists $\Delta \le 3$ such that $h_\Gamma(t)_{\ell + \delta + \Delta} = b$. By setting $\gamma_0 = 26 - \Delta$ and since $\ell + \delta \ge 17$, $h_\Gamma(t)$ contains the pattern $\patternprime{a}{\delta}{b}$ at position $\ell \le 2$ as desired.
\end{proof}

We are now ready to prove Theorem \ref{thm:sufficient_condition}.
\begin{proof}[Proof of Theorem \ref{thm:sufficient_condition}]

Let $w$ be an infinite ternary word respecting condition (\ref{condAdjacent}). We want to construct an infinite ternary square-free word $w'$ such that $\subsequence{w'}{p} =\subsequence{w}{p}$ and $\subsequence{w'}{q} = \subsequence{w}{q}$. Let $(u_i)_{i \ge 0}$ be the ordered sequence of the multiples of $p$ or of $q$. So, for every $n \ge 0$, $w'_{u_n}$ must be equal to $w_{u_n}$. We start by setting $w'$ to be $h_{26}(t)$ for some infinite ternary square-free word $t$ such that $w'_0 = w_0$. 

Let $n \ge 0$ be such that for every integer $i \le n$, $w'_{u_i}  = w_{u_i}$ and the prefix of length $u_n + 1$ of $w'$ is square-free, and we will show how to increase the value of $n$ by only doing modifications on $\Gamma$ so that $w'$ remains square-free.
\begin{itemize}
    \item If $u_{n + 1} - u_{n} \ge 30$, then since the pattern $w_{u_{n + 1}}$ is $(3, h_{26})$-recurrent (every ternary square-free word of length $4$ must contain the three letters), we can apply Lemma \ref{lemma:contraction_for_Delta_recurrent_patterns} with $P = w_{u_{n + 1}}$, $\Delta \le 3$, $N = u_n$, $N' = u_{n + 1}$, and we indeed have that $N' \ge N + 4 + 26\lceil3/3\rceil$ since by hypothesis, $N' \ge N + 30$ and $4 + 26\lceil\Delta/3\rceil \le 30$.

    \item Otherwise, if $u_{n + 1} - u_{n} \le 29$, then $u_n - u_{n - 1} \ge 302$ since $u_{n + 1} - u_{n - 1} \ge \min(p, q) \ge 331$. Let $\delta = u_{n + 1} - u_n - 1$, we would like to find or construct the pattern $\patternprime{w_{u_n}}{\delta}{w_{u_{n + 1}}}$ and shift it towards the left. 
    \begin{itemize}
        \item If $\delta = 0$, then in particular $w_{u_n}\neq w_{u_{n + 1}}$ and therefore the pattern $\patternprime{w_{u_n}}{\delta}{w_{u_{n + 1}}}$ is $(12, h_{26})$-recurrent by Lemma \ref{lemma:bound_28_to_see_pattern_again}, thus we can apply Lemma \ref{lemma:contraction_for_Delta_recurrent_patterns} with $\mathcal{P} = \patternprime{w_{u_n}}{\delta}{w_{u_{n + 1}}}$, $\Delta \leq 12$, $N = u_{n - 1}$, $N' = u_{n}$ and we indeed have that $N' \ge N + 4 + 26\lceil\Delta/3\rceil$ since by hypothesis $N' \ge N + 302>N+108\ge N+4 + 26\lceil\Delta/3\rceil$.
        \item If $\delta \in \{1, 2, 4, 5, 6, 7, 8\}$, then by Lemma \ref{lemma:bound_28_to_see_pattern_again}, the pattern $\patternprime{w_{u_n}}{\delta}{w_{u_{n + 1}}}$ is $(27, h_{26})$-recurrent so we can apply Lemma \ref{lemma:contraction_for_Delta_recurrent_patterns} with $\mathcal{P} = \patternprime{w_{u_n}}{\delta}{w_{u_{n + 1}}}$, $\Delta \leq 27$, $N = u_{n - 1}$, $N' = u_{n}$ and we indeed have that $N' \ge N + 4 + 26\lceil\Delta/3\rceil$ since by hypothesis $N' \ge N + 302>N+238\ge N+4 + 26\lceil\Delta/3\rceil$.
        \item Finally, if $\delta = 3$ or $\delta \ge 9$, then by Lemma \ref{lemma:Delta_constructability_of_A3B_and_Ageq9B_bis} the pattern $\patternprime{w_{u_n}}{\delta}{w_{u_{n + 1}}}$ is $(10, h)$-constructible. Therefore, we can apply Lemma \ref{lemma:contraction_for_Delta_constructible_patterns} with $\mathcal{P} = \patternprime{w_{u_n}}{\delta}{w_{u_{n + 1}}}$, $N = u_{n - 1}$, $N' = u_n$, and $\Delta \leq 10$ and we indeed have that $N' \ge N + 26\lceil(\Delta+1)/3\rceil +198$ since by hypothesis $N' \ge N + 302\ge N+26\lceil(\Delta+1)/3\rceil + 198$.
    \end{itemize}
\end{itemize}
In each case, by either applying Lemma \ref{lemma:contraction_for_Delta_recurrent_patterns} or Lemma \ref{lemma:contraction_for_Delta_constructible_patterns}, we can modify $w'$ so that it is still  an infinite ternary square-free word, and for every integer $i \le n + 1$, $w'_{u_i} = w_{u_i}$.
\end{proof}

Notice that in Theorem \ref{thm:sufficient_condition}, there are no conditions on the subsequences modulo $p$ and $q$ besides condition (\ref{condAdjacent}), so this result could be used to prove existence of words with subsequences having other constraints than being square-free.

\subsection{Pairs that respect the conditions}\label{sec:pairs_that_respect_the_condition}

In this section, we prove the existence of infinite ternary words that are square-free modulo $p$, square-free modulo $q$, and that respect condition (\ref{condAdjacent}). By Theorem \ref{thm:sufficient_condition}, the pairs $(p, q)$ of relatively prime integers for which such words exist are positive pairs.

\begin{theorem}\label{thm:pq_coprime_st_w_respect_star_and_is_sqfree}
    Let $(p, q)$ be a pair of relatively prime integers such that $p$, $q \ge 3$ and $\max(p, q) \ge 364$. Then there exists an infinite ternary word $w$ that respects condition (\ref{condAdjacent}) and such that $\subsequence{w}{p}$ and $\subsequence{w}{q}$ are square-free.
\end{theorem}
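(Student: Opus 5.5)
Assume without loss of generality $p<q$, so $q\ge 364$. Since $\gcd(p,q)=1$, the positions $\equiv 0 \pmod p$ and $\equiv 0\pmod q$ meet only at multiples of $pq$. The word $w$ is therefore determined by two infinite square-free words $x=\subsequence{w}{p}$ and $y=\subsequence{w}{q}$, subject to two compatibility requirements. First, $x_{jq}=y_{jp}$ for every $j\ge0$ (the common positions $jpq$). Second, condition (\ref{condAdjacent}) must hold. All other positions of $w$ are arbitrary. My plan is to fix $x$ once and for all and then build $y$ letter by letter.

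Fix any infinite ternary square-free word $x$. Position $kq$ of $w$ imposes the following on the letter $y_k$:
\begin{itemize}
\item if $p\mid k$, then $y_k = x_{kq/p}$;
\item if $kq+1\equiv 0\pmod p$, then $y_k\ne x_{(kq+1)/p}$;
\item if $kq-1\equiv 0 \pmod p$, then $y_k\ne x_{(kq-1)/p}$.
\end{itemize}
Since $p\ge3$, for a given $k$ at most one of these three cases applies. So $y$ must be an infinite square-free word with prescribed letters at positions $k\equiv 0 \pmod p$, and with one forbidden letter at positions $k\equiv \pm q^{-1}\pmod p$. Note that the roles can be swapped: we may instead fix $y$ and build $x$ with constraints recurring with period $q$ in the index. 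I plan to take that second viewpoint. It gives a square-free word $x$ in which the forced letters $x_{jq}=y_{jp}$ occur every $q\ge 364$ positions, while the one-letter prohibitions occur at positions $i$ with $ip\pm1\equiv 0\pmod q$. These are two residue classes mod $q$, hence at most $2$ prohibitions per period of $q$. Thus $x$ has, per block of $q$ consecutive indices, at most one fixed letter and two forbidden letters.

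Building such an $x$ is exactly the setting of Section~\ref{sec:prel_and_palindromes} and of Lemmas~\ref{lemma:contraction_for_Delta_recurrent_patterns} and~\ref{lemma:contraction_for_Delta_constructible_patterns}. Take $x=h_\Gamma(t)$ and adjust $\Gamma$ constraint by constraint. A constraint ``$x_N=a$'' is the $(3,h_{26})$-recurrent pattern $a$. A prohibition ``$x_N\ne a$'' is the pattern $\overline{a}$, which is $(1,h_{26})$-recurrent. By Lemma~\ref{lemma:contraction_for_Delta_recurrent_patterns}, each constraint can be enforced without disturbing earlier ones, provided consecutive constraints are at distance at least $30$. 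The difficulty is that the three constrained residues $0$, $p^{-1}$ and $-p^{-1}$ modulo $q$ may be close to one another. I will handle this as in the proof of Theorem~\ref{thm:sufficient_condition}: close constraints are grouped into a single pattern $\patternprime{A}{\delta}{B}$ with $A,B$ singletons or complements of singletons. These grouped patterns are recurrent or constructible, by Lemmas~\ref{lemma:bound_28_to_see_pattern_again} and~\ref{lemma:Delta_constructability_of_A3B_and_Ageq9B_bis}, or by a similar finite check for complement alphabets, which only make things easier. A group of up to three constraints within a short window may also be needed; I would treat it by an exhaustive computer check of constructibility. The groups are separated by gaps of size at least about $q-60\ge 300$, which suffices for Lemma~\ref{lemma:contraction_for_Delta_constructible_patterns} (it needs $26\lceil(\Delta+1)/3\rceil+198\le 302$).

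The main obstacle is making the bookkeeping yield exactly the threshold $364$. A gap of $29$ inside a group, plus the constraint gap $302$ plus margin, suggests $331+$ slack from having three residues, and I expect that the precise count of $364$ comes from this case. I also need the degenerate situation $p^{-1}\equiv -p^{-1}$ (i.e.\ $q\mid 2$) and the case where $\pm p^{-1}$ is adjacent to $0$ (i.e.\ $p\equiv\pm1 \pmod q$, impossible since $3\le p<q$ unless $p=q-1$). Small $p$ itself causes no trouble in this viewpoint: the constrained word $x$ only sees period $q$, and $y$ can be taken to be any square-free word. Finally, I would verify that the resulting $w$ satisfies (\ref{condAdjacent}) by construction.
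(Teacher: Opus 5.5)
Your framework is the paper's: fix $\subsequence{w}{q}=s$, build $\subsequence{w}{p}=h_\Gamma(t)$ with a forced letter at indices $iq$ and forbidden letters at $iq+a$ and $iq+a+b$, where $2a+b=q$ (Lemma~\ref{lemma:description_of_constraints_distance_for_p_q_coprime}), group nearby constraints into patterns, and move them into place with Lemmas~\ref{lemma:contraction_for_Delta_recurrent_patterns} and~\ref{lemma:contraction_for_Delta_constructible_patterns}. The gap is the intermediate regime.

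Lemma~\ref{lemma:contraction_for_Delta_recurrent_patterns} needs a gap of $30$ even for a single letter. So you must group whenever $a\le 29$ or $b\le 29$. That requires bounds for $\overline{c}\diamond^{\delta}c'\diamond^{\delta}\overline{c''}$ and $\overline{c}\diamond^{\delta}\overline{c'}$ with $\delta$ up to $28$. The paper's checks (Lemmas~\ref{lemma:pattern!a<-d->!b_distance_and_pattern!a<-d->b<-d->!c_distance} and~\ref{lemma:Mbad_patterns_are_13_h_constructible}) stop at $\delta=17$. These are not checks that ``only make things easier'': spacings near multiples of $26$ interact badly with $h_{26}$. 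Recall that $\patternprime{a}{25}{a}$ never occurs in $h_{26}(t)$, and $\mathcal{P}_{bad}$ is nonempty already for $\delta\le 17$.

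The quantitative margin is also tight. With a cluster of width up to $58$, the gap to the next cluster can be only $364-58=306$. Lemma~\ref{lemma:contraction_for_Delta_constructible_patterns} then needs $(\Delta,h)$-constructibility with $\Delta\le 11$, which is better than the $(13,h)$ bound the paper obtains even for its bad patterns. So your argument rests on computations whose outcome is unknown. Your guess about where $364$ comes from uses the $302/331$ bookkeeping of Theorem~\ref{thm:sufficient_condition}, not this theorem's.

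The missing idea is Theorem~\ref{thm:borne_19_square_free_from_forced_letters}. If $a\ge 19$ and $b\ge 19$, replace each forbidden letter by an admissible forced letter. Then Rosenfeld's result directly gives a square-free word compatible with the partial word, with no morphism at all. The $h_\Gamma$ argument is then needed only when $a\le 18$ or $b\le 18$, i.e.\ for $\delta\le 17$, which is exactly the range the paper's lemmas cover.

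This is also where $364$ comes from. If $a\le 18$, then $b=q-2a\ge q-36$. The $(13,h)$-constructible patterns of $\mathcal{P}_{bad}$ need a gap of $26\lceil 14/3\rceil+198=328$, hence $q\ge 328+36=364$.

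Finally, your claim that groups are separated by about $q-60$ is wrong when $b$ is small. There the group is the pair $\overline{c}\diamond^{b-1}\overline{c'}$, and it is separated from the forced letters only by $a=(q-b)/2$. This can be as small as $168$ with your threshold, or $173$ with the paper's. That is below the $224$ that Lemma~\ref{lemma:contraction_for_Delta_constructible_patterns} requires even for $\Delta=0$. You must instead use the $(6,h_{26})$-recurrence of this pattern together with Lemma~\ref{lemma:contraction_for_Delta_recurrent_patterns}, which needs only $56$.
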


To prove Theorem \ref{thm:pq_coprime_st_w_respect_star_and_is_sqfree}, we will fix $\subsequence{w}{q}$ to be an arbitrary square-free word (assuming $p<q$), and we will construct the square-free word $\subsequence{w}{p}$ while preserving the condition (\ref{condAdjacent}) and the fact that $\subsequence{w}{p}$ must agree with $\subsequence{w}{q}$ whenever they coincide (these positions are multiples of $q$ in $\subsequence{w}{p}$).
We first give some lemmas about the structure of the constraints for such words before proving Theorem \ref{thm:pq_coprime_st_w_respect_star_and_is_sqfree}.

\begin{lemma}\label{lemma:description_of_constraints_distance_for_p_q_coprime}
    Let $(p, q)$ be a pair of relatively prime integers with $p, q \ge 3$. There exist $a,b>0$ with $2a + b = q$ such that for every integer $i \ge 0$, the only multiples of $p$ in $]ipq, (i + 1)pq[$ that are congruent to $\pm 1$ modulo $q$ are $ipq + ap$, and $ipq + ap + bp$.
\end{lemma}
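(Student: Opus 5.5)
Since $\gcd(p,q)=1$, the residue of $kp$ modulo $q$ runs over all of $\mathbb{Z}/q\mathbb{Z}$ exactly once as $k$ ranges over $\{0,1,\dots,q-1\}$. The multiples of $p$ in $]ipq,(i+1)pq[$ are exactly the numbers $ipq+kp$ with $1\le k\le q-1$, and $ipq+kp\equiv kp \pmod q$. So the condition does not depend on $i$, and it suffices to treat $i=0$. Because $q\ge 3$, the residues $1$ and $-1$ are distinct and both nonzero. Hence there are exactly two values $k_1,k_2\in\{1,\dots,q-1\}$ with $k_1p\equiv 1$ and $k_2p\equiv -1 \pmod q$.

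Next I use the symmetry $k\mapsto q-k$. It sends the solution of $kp\equiv 1$ to the solution of $kp\equiv -1$, since $(q-k)p\equiv -kp$. Therefore $k_2=q-k_1$. Let $a=\min(k_1,k_2)$ and $b=q-2a$, so that $\max(k_1,k_2)=q-a=a+b$ and $2a+b=q$. Clearly $a>0$.

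It remains to show $b>0$, that is, $k_1\neq k_2$, which is the only delicate point. If $b=0$ then $q=2a$ and $k_1=k_2=a$. This would give $1\equiv -1 \pmod q$, contradicting $q\ge 3$. So $b>0$.

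With this choice, the two multiples of $p$ in the interval that are $\equiv\pm1 \pmod q$ are $ipq+ap$ and $ipq+(a+b)p=ipq+ap+bp$, as claimed. There is no real obstacle here: the argument is elementary modular arithmetic. The only care needed is to use $q\ge 3$ so that $\pm1$ are distinct residues, and to note that the endpoints of the open interval are excluded because they are $\equiv 0 \pmod q$.
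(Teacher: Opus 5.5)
Your proof is correct and follows essentially the paper's route. The paper obtains the two positions $s_\pm \in \,]0,pq[$ via the Chinese remainder theorem and shows $s_+ + s_- = pq$, which is exactly your reflection $k \mapsto q-k$ giving $k_2 = q-k_1$; it then defines $a$ and $b$ in the same way, and you additionally make explicit why $b>0$ (namely $1 \not\equiv -1 \pmod q$ for $q \ge 3$), a point the paper leaves implicit.
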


\begin{proof}
Since $p$ and $q$ are coprime, the Chinese remainder theorem implies that there exists $s_+\in ]0,pq[$ such that 
\[ 
\{x: x\equiv 0\mod p \text{ and } x\equiv +1\mod q \}=\{s_++ipq: i\in \mathbb{N}\}.
\]
That is, for all $i$, $s_++ipq$ is the unique multiple of $p$ in $]ipq, (i + 1)pq[$ that is congruent to $+1$ modulo $q$. Similarly, there exists $s_-\in ]0,pq[$ such that  for all $i$, $s_-+ipq$ is the unique multiple of $p$ in $]ipq, (i + 1)pq[$ that is congruent to $-1$ modulo $q$. 

Note that, $s_++s_-$ is congruent to $0$ both modulo $q$ and modulo $p$. Since $s_++s_-\in ]0,2pq[$, we deduce $s_++s_-=pq$.
Now take $a=\frac{\min(s_-, s_+)}{p}$ and $b= \frac{\max(s_-, s_+)}{p}-a$ and we have the desired property.
\end{proof}

In order to prove Theorem \ref{thm:pq_coprime_st_w_respect_star_and_is_sqfree}, we will need to find the patterns $\patternprime{\overline{a}}{\delta}{\overline{b}}$ or the patterns $\patternprime{\overline{a}}{\delta}{b}\patternprime{}{\delta}{\overline{c}}$ in a square-free word. In what follows, we provide results on their recurrence. We first define $\mathcal{P}_{bad}$, a set of bad patterns containing the following patterns,  that will be treated separately.\\
\begin{center}
\begin{tabular}{lll}
$\patternprime{\overline{a}}{16}{a}\patternprime{}{16}{\overline{a}}$ &$\patternprime{\overline{a}}{16}{a}\patternprime{}{16}{\overline{\pi_3^2(a)}}$
&$\patternprime{\overline{a}}{14}{a}\patternprime{}{14}{\overline{\pi_3(a)}}$ \\
$\patternprime{\overline{a}}{12}{a}\patternprime{}{12}{\overline{a}}$ 
&$\patternprime{\overline{a}}{12}{\pi_3^2(a)}\patternprime{}{12}{\overline{a}}$ 
&$\patternprime{\overline{a}}{12}{\pi_3(a)}\patternprime{}{12}{\overline{\pi_3^2(a)}}$ \\
$\patternprime{\overline{a}}{10}{\pi_3(a)}\patternprime{}{10}{\overline{a}}$ 
&$\patternprime{\overline{a}}{8}{\pi_3^2(a)}\patternprime{}{8}{\overline{a}}$
&$\patternprime{\overline{a}}{8}{\pi_3(a)}\patternprime{}{8}{\overline{\pi_3^2(a)}}$ \\
$\patternprime{\overline{a}}{6}{\pi_3^2(a)}\patternprime{}{6}{\overline{\pi_3(a)}}$ 
&$\patternprime{\overline{a}}{0}{\pi_3^2(a)}\patternprime{}{0}{\overline{a}}$ 
&$\patternprime{\overline{a}}{0}{\pi_3(a)}\patternprime{}{0}{\overline{a}}$ \\
\end{tabular} \\
\end{center}
and that for every  $a \in \Sigma_3$.

\begin{lemma}\label{lemma:pattern!a<-d->!b_distance_and_pattern!a<-d->b<-d->!c_distance} 
    For every $\delta \in \{0,\dots, 17\}$ and for every letter $a$, $b$, $c \in \Sigma_3$, 
    \begin{itemize}
        \item the pattern $\mathcal{P} = \patternprime{\overline{a}}{\delta}{\overline{b}}$ is $(6, h_{26})$-recurrent, and
        \item either the pattern $\mathcal{P} = \patternprime{\overline{a}}{\delta}{b}\patternprime{}{\delta}{\overline{c}}$ is in  $\mathcal{P}_{bad}$, or it is $(27, h_{26})$-recurrent.
    \end{itemize}
\end{lemma}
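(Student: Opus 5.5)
This is a finite statement about recurrence in images $h_{26}(t)$ of ternary square-free words $t$, so I will prove it by exhaustive computer verification, as was done for Lemma \ref{lemma:bound_28_to_see_pattern_again}.

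\textbf{Reduction to finite factors.} Each pattern considered has length at most $2\delta+3 \le 37$. Checking whether it occurs at some position in $\{i,\dots,i+\Delta\}$ with $\Delta\le 27$ therefore only involves a factor of $h_{26}(t)$ of length at most $27+37=64$. Any such factor lies inside $h_{26}(u)$ for a square-free factor $u$ of $t$ of length $\lceil 64/26\rceil+1=4$. It is safest to take length $5$ to absorb the offset of $i$ within an image.

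A further restriction is needed. I will only use words $u$ that extend to an infinite square-free word, and the ternary square-free words of length $5$ all do, since they are factors of infinite square-free words over three letters. This is why every square-free $u$ of length $5$ can be enumerated, rather than only those occurring in one fixed word.

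\textbf{The enumeration.} For each such $u$, each offset $i\in\{0,\dots,25\}$, each $\delta\in\{0,\dots,17\}$ and each choice of letters $a,b,c$, I check the following two conditions.
\begin{itemize}
\item For the first item, an occurrence of $\patternprime{\overline{a}}{\delta}{\overline{b}}$ must start in $[i,i+6]$.
\item For the second item, either the pattern belongs to $\mathcal{P}_{bad}$, or an occurrence must start in $[i,i+27]$.
\end{itemize}

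Circularity of $h_{26}$ and the symmetry $\pi_3$ reduce the work. The list $\mathcal{P}_{bad}$ is written in terms of $a,\pi_3(a),\pi_3^2(a)$, so it is closed under $\pi_3$. Hence we may fix $u_0=0$, and we only need to consider $a=0$ up to rotation.

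\textbf{Hand argument for the first item.} Independently of the computer, the first item has a short proof. The pattern $\patternprime{\overline a}{\delta}{\overline b}$ fails at position $j$ only if $w_j=a$ or $w_{j+\delta+1}=b$. A square-free ternary word never has the same letter at two consecutive positions. So among positions $j,j+1$, at most one has letter $a$, and likewise at most one of $j+\delta+1,j+\delta+2$ has letter $b$. Counting these failures across a window of consecutive positions gives a valid $j$ fairly quickly, and the stated bound of $6$ is presumably what the enumeration confirms.

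\textbf{Main obstacle.} The real difficulty is the second item. There the computer does not merely confirm a bound. It must identify exactly which patterns $\patternprime{\overline a}{\delta}{b}\patternprime{}{\delta}{\overline c}$ are not $(27,h_{26})$-recurrent, and check that these coincide with (or are contained in) $\mathcal{P}_{bad}$.

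To make this sound, the program must compute, for each pattern, the maximum over all $(u,i)$ of the first occurrence distance. It should then report the set of patterns whose maximum exceeds $27$, or that never occur at all, and compare that set to $\mathcal{P}_{bad}$. I would also verify that taking window length $5$ suffices by rerunning with length $6$ and checking that the results are unchanged.
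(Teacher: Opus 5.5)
Your plan matches the paper's proof. The authors also verify both items by exhaustive enumeration: they generate all factors of length $30$ (resp.\ $70$) of images under $h_{26}$ and check that each pattern occurs at position at most $6$ (resp.\ at most $27$, for patterns outside $\mathcal{P}_{bad}$). Your enumeration of $h_{26}(u)$ over square-free $u$ of length $5$, with offsets $i\in\{0,\dots,25\}$, covers the same windows; length $4$ would already suffice, since $25+63<4\cdot 26$. Also, enumerating all square-free $u$ gives a superset of the actual factors, so extendability of $u$ only matters for avoiding spurious failures, not for soundness.

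Two corrections are needed.

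\textbf{The rotation $\pi_3$ can be used only once.} You may either fix $u_0=0$ and range over all $a,b,c$, or fix $a=0$ and range over all $u$. Imposing both conditions at once only tests pairs with $u_0=a$, and so silently skips two thirds of the cases.

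\textbf{Your hand argument for the first item does not work as stated.} The no-$aa$ property alone allows up to $4$ positions $j\in[i,i+6]$ with $w_j=a$ and up to $4$ with $w_{j+\delta+1}=b$, and $4+4>7$. The missing observation is as follows.
\begin{itemize}
\item Four occurrences of $a$ among $7$ consecutive positions force the factor $axayaza$ with $x\neq y$ and $y\neq z$, hence $axayaxa$.
\item This word has no square-free right extension: $axayaxa\,x$ ends with $(ax)^2$, and $axayaxa\,y=(axay)^2$.
\item So in an infinite ternary square-free word, each letter occurs at most $3$ times in any $7$ consecutive positions.
\end{itemize}
Since $3+3<7$, this gives the bound $6$. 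It holds for every $\delta\ge 0$ and every infinite ternary square-free word, not just images under $h_{26}$.

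With this fix, the first item needs no computer. The second item, with its exceptional set $\mathcal{P}_{bad}$ specific to $h_{26}$, still does, exactly as in the paper.
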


\begin{lemma}\label{lemma:Mbad_patterns_are_13_h_constructible}
    Every pattern $\mathcal{P} \in \mathcal{P}_{bad}$ is $(13, h)$-constructible.
\end{lemma}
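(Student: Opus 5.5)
We prove Lemma \ref{lemma:Mbad_patterns_are_13_h_constructible} by a finite exhaustive search, in the same spirit as Lemma \ref{lemma:Delta_constructability_of_A3B_and_Ageq9B_bis}. First, since $h$ is circular and $\mathcal{P}_{bad}$ is closed under the letter permutation $\pi_3$, we only treat $a=0$: if $h_\Gamma(t)$ contains $\mathcal{P}$ at position $j$, then $h_\Gamma(\pi_3(t))=\pi_3(h_\Gamma(t))$ contains $\pi_3(\mathcal{P})$ at position $j$. This leaves twelve patterns. Each has length $2\delta+3\le 2\cdot 16+3=35$. An occurrence at position at most $13$ therefore ends before position $13+35=48$. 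Since every image under $h$ has length at least $23$, this window is covered by the images of the first three letters of the preimage. So it suffices to take $k=3$ in the definition of $(13,h)$-constructibility.

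For each of the twelve patterns $\mathcal{P}$ and each square-free word $t=t_0t_1t_2$, we enumerate all $4^3=64$ guiding sequences $(\gamma_0,\gamma_1,\gamma_2)\in\{23,\dots,26\}^3$. For each, we compute the first occurrence of $\mathcal{P}$ in $h_\Gamma(t)$ and check that some choice places it at position at most $13$. By circularity of $h$ we may fix $t_0=0$, which leaves only four preimages: $010$, $012$, $020$, $021$. The whole verification is $12\cdot 4\cdot 64$ cases and can be done by a short program, as for the previous lemmas. Should $k=3$ fail for some pattern, we rerun the search with $k=4$. That is still allowed by the definition, because constructibility only needs some finite $k$, and a longer preimage only adds freedom.

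The main obstacle is that the check could genuinely fail for some bad pattern. These patterns were set aside precisely because they are not well recurrent under $h_{26}$. The structural reason is that the common prefix of length $12$ and common suffix of length $9$ of all images force rigid letter coincidences at distances near $\delta+1$. The freedom we exploit is that changing $\gamma_0$ shifts everything after position $12$ by up to $3$. This lets the three constrained positions $j$, $j+\delta+1$, $j+2\delta+2$ fall into different, appropriately shifted copies of the middle variable block ($12$, $201$, $2012$, $20121$). If a pattern resists, a hand argument would pick $j\le 2$ inside the common prefix of $h(t_0)$, then choose $\gamma_0$ and $\gamma_1$ successively so that the second and third positions hit the required letters. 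The offsets $\delta\in\{0,6,8,10,12,14,16\}$ are small enough that only the first one or two images matter. In the end, though, we rely on the computer enumeration as the proof.
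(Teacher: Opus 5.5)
Your method is the same as the paper's: an exhaustive search, for each bad pattern and each short square-free preimage, over all guiding sequences. The paper uses preimages of length $2$; your length $3$ is a weaker requirement, so that difference is harmless.

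The gap is in your case reduction, where you use the circular symmetry twice. The only symmetry available is $(\mathcal{P},t)\mapsto(\pi_3(\mathcal{P}),\pi_3(t))$, applied simultaneously to pattern and preimage. It acts freely, with orbits of size $3$, on the $36\cdot 12=432$ pairs (bad pattern, square-free preimage of length $3$), which leaves $144$ classes. Normalizing $a=0$ gives one complete set of representatives: $12$ patterns against all $12$ preimages. Normalizing $t_0=0$ gives another: $36$ patterns against $4$ preimages. You cannot impose both. The other two members of the orbit of a checked pair have $a=t_0=1$ and $a=t_0=2$, so no pair with $a=0$ and $t_0\neq 0$ is ever reached. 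Your $12\cdot 4\cdot 64$ enumeration therefore covers only $48$ of the $144$ classes required for $(13,h)$-constructibility.

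The omitted cases matter. Take $\overline{0}\,2\,\overline{0}$, the family $\overline{a}\diamond^{0}\pi_3^2(a)\diamond^{0}\overline{a}$ with $a=0$. In a square-free word it is exactly the factor $121$. Pair it with a preimage starting with $1$.
\begin{itemize}
\item The common prefix $120210201021$ of the images of $1$ contains no $121$.
\item Checking the four middle blocks $20$, $012$, $0120$, $01202$ shows that the only way to get $121$ at a position at most $13$ is $\gamma_0=24$.
\item Then $h_{24}(1)=120210201021\,012\,102012021$ has $121$ at position exactly $13$.
\end{itemize}
So the bound $13$ is attained here, and your reduced search never examines this case. The fix is to keep only one normalization: for instance $a=0$ with all twelve preimages $t_0t_1t_2$, or all six preimages of length $2$ as in the paper.

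Your fallback to $k=4$ is also vacuous. By your own window argument, positions $0,\dots,47$ are determined by $t_0t_1t_2$ and $\gamma_0\gamma_1\gamma_2$, so a fourth letter cannot create an occurrence at position at most $13$.
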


Both Lemma \ref{lemma:pattern!a<-d->!b_distance_and_pattern!a<-d->b<-d->!c_distance} and Lemma \ref{lemma:Mbad_patterns_are_13_h_constructible}
can be checked by a computer program performing an exhaustive search on images of $h$. We prove the first item of Lemma \ref{lemma:pattern!a<-d->!b_distance_and_pattern!a<-d->b<-d->!c_distance} in \cite[lemma\_\ref{lemma:pattern!a<-d->!b_distance_and_pattern!a<-d->b<-d->!c_distance}\_i.cpp]{code} by generating all factors of length $30$ appearing in the images of $h_{26}$. Then, we check that every pattern $\mathcal{P} = \patternprime{\overline{a}}{\delta}{\overline{b}}$ appears at position at most $6$ in each factor. We proceed similarly in \cite[lemma\_\ref{lemma:pattern!a<-d->!b_distance_and_pattern!a<-d->b<-d->!c_distance}\_ii.cpp]{code} to prove the second item of Lemma \ref{lemma:pattern!a<-d->!b_distance_and_pattern!a<-d->b<-d->!c_distance} except that we generate the factors of length 70 instead and that we check that for every pattern $\mathcal{P} = \patternprime{\overline{a}}{\delta}{b}\patternprime{}{\delta}{\overline{c}}$ that is not in  $\mathcal{P}_{bad}$, $\mathcal{P}$ appears at position at most $27$ in each factor.
To prove Lemma \ref{lemma:Mbad_patterns_are_13_h_constructible}, for each pattern $\mathcal{P} \in \mathcal{P}_{bad}$ and each square-free pre-image $t$ of length $2$, we compute by exhaustive search a guiding sequence $\Gamma_{\mathcal{P}, t}$ minimizing the position of the first occurrence of $\mathcal{P}$ in $h_{\Gamma_{\mathcal{P}, t}}(t)$, and we check that this position is at most $13$; this verification is performed in \cite[lemma\_\ref{lemma:Mbad_patterns_are_13_h_constructible}.cpp]{code}.

Another way to look at patterns is to consider them as partial words.
A \emph{partial word} over an alphabet $\Sigma$ is a (possibly infinite) word over $\Sigma \cup \{\diamond\}$. For any (finite or infinite) partial word $v$ and (finite or infinite) word $w$, we say that $w$ is \emph{compatible} with $v$ if $|w| \le |v|$ (or both are infinite) and for every $i \le |w|$, if $v_i \ne \diamond $ then $w_i = v_i$. For instance, the word $chocolate$ is compatible with the word $c\diamond o\diamond\diamond\diamond at\diamond$ and is not compatible with the word $c\diamond\diamond a\diamond c\diamond ao\diamond$. If a partial word has infinite length, then it can be seen as a set of forced letters at some positions. The following theorem states that if such constraints are at distance at least $19$ from each other, then we can construct a square-free word fulfilling those constraints.

\begin{theorem}[Rosenfeld, 2020 \cite{rosenfeld2020far}]\label{thm:borne_19_square_free_from_forced_letters}
    Let $(p_i)_{i \ge 0} \in \mathbb{N}_{\ge 18}^\omega$
    and $(c_i)_{i \ge 0} \in \Sigma^\omega$ be two sequences. 
    For any partial word $v = \prod\limits_{i\ge 0}c_i\diamond^{p_i}$, there exists an infinite ternary square-free word $w$ compatible with $v$.  
\end{theorem}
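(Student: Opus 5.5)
The plan is to prove the theorem by a counting argument of the kind introduced by Rosenfeld, rather than by an explicit morphic construction. For $n\ge 0$, let $C_n$ be the set of square-free ternary words of length $n$ that are compatible with the prefix of length $n$ of $v$. By compactness (K\H{o}nig's lemma), it suffices to show that $C_n\neq\emptyset$ for every $n$. I will prove the stronger statement that $|C_n|$ grows, with a growth factor that depends on where $n$ sits relative to the forced positions $P=\{\sum_{j<i}(p_j+1): i\ge 0\}$.

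The first step is the basic recurrence. Every word of $C_{n+1}$ is obtained from a word $u\in C_n$ by appending a letter: any letter $x\neq u_{n-1}$ if position $n$ is free, and only the forced letter if $n\in P$. Such an extension fails only if it creates a square suffix of some period $\ell\ge 2$, since the case $\ell=1$ is excluded by the choice of letter. A bad word $uu'$ with square suffix of period $\ell$ is determined by its prefix of length $n+1-\ell$, which lies in $C_{n+1-\ell}$. Hence
\[
|C_{n+1}|\ \ge\ e_n\,|C_n|-\sum_{\ell\ge2}|C_{n+1-\ell}|,
\]
where $e_n=2$ at free positions and $e_n=1$ at forced positions. (For the crude bound I count all $3$ letters at free positions and let $\ell=1$ be a bad case.)

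To make this usable, I set up an induction hypothesis with position-dependent ratios: $|C_{m+1}|\ge \alpha_{k(m)}|C_m|$, where $k(m)$ is the distance from $m$ to the previous forced position. The ratio $\alpha_k$ is below $1$ at and just after forced positions and above $1$ elsewhere, and the product of the $\alpha_k$ over any window between two consecutive forced positions is at least some $\beta>1$ once the gap is at least $18$. The induction then bounds each $|C_{n+1-\ell}|$ by $|C_n|\prod\alpha^{-1}$ over the intervening positions. Since every block of $19$ consecutive positions contributes a factor at least $\beta$, this sum is dominated by a geometric series. Checking that $e_n-\sum_\ell(\cdots)\ge\alpha_{k(n)}$ for every residue pattern of forced positions reduces to finitely many inequalities, with the tail beyond a few blocks handled by the geometric bound.

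The main obstacle is the forced positions. There $e_n=1$, so even the period-$2$ and period-$3$ losses can exceed the single available extension, and the crude inequality gives nothing. To fix this I would not count raw square-free words. Instead I would let $C_n$ be the square-free compatible words and treat all short periods $\ell\le L$ (for some $L$ of order $20$–$30$) exactly, via a computer enumeration of the local extension structure. Concretely, I would track counts refined by the last $L$ letters, or bound the bad short-period extensions by checking all compatible windows. Only periods $\ell>L$ would go into the geometric sum, where $\beta^{\ell/19}$ makes them negligible. Choosing $L$ and the ratios $\alpha_k$ so that the finite check succeeds exactly when all $p_i\ge18$ is the delicate, computer-assisted part of the proof, and it is where the constant $18$ would come from.
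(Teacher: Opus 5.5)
The paper does not prove this statement; it imports it from \cite{rosenfeld2020far}. Your proposal therefore has to stand on its own, and as written it has a genuine gap.

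The compactness reduction is fine, and so is the one-letter recurrence at free positions. At a forced position, however, the words of $C_n$ ending with the forced letter have no extension, so the period-$1$ term must be subtracted there too.

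The real problem is the scalar hypothesis $|C_{m+1}|\ge\alpha_{k(m)}|C_m|$. It cannot be propagated for ternary words, and the obstruction is not the forced positions. On a run of free positions it requires $\alpha_n\le 2-\sum_{\ell\ge2}\prod_{j=n+1-\ell}^{n-1}\alpha_j^{-1}$.
\begin{itemize}
\item With a constant ratio this reads $(2-\alpha)(\alpha-1)\ge 1$, which is impossible since the left-hand side is at most $1/4$.
\item With variable ratios, let $T_n$ be the partial sums over the run of $x_n=\prod_{j<n}\alpha_j$. They must satisfy $T_{n+1}\le 3T_n-3T_{n-1}$. Writing $r_n=T_n/T_{n-1}$, this gives $r_{n+1}\le 3-3/r_n\le r_n-\frac{3}{4r_n}$. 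So $r_n$ falls below $1$ after boundedly many steps, contradicting $x_n>0$.
\end{itemize}
Since the $p_i$ may be arbitrarily large, no choice of the $\alpha_k$ rescues the argument. Plain counting already fails for unconstrained ternary square-free words.

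Your last paragraph names the right architecture: periods at most $L$ handled exactly through the regular language of words with no square of period at most $L$, and only periods $>L$ through a geometric series. But that is where the entire proof lives, and it does not fit the invariant you state. Refining by suffixes buys nothing if what you propagate is still a bound on $|C_n|$. A word whose suffix clashes with the forced letter has no extension, so you need an invariant that sees how $C_n$ is distributed over suffix states.

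The standard device is a potential, built as follows.
\begin{itemize}
\item Take positive weights $\omega_k(\sigma)$ on the states $\sigma$ of an automaton for words without squares of period at most $L$ (suffixes of length $2L-1$ suffice).
\item Let the weights also depend on the phase $k$ since the last forced position. Merge all phases $k\ge 18$ into one, since only from those may the next position be forced.
\item Require, for every state and phase, that the total weight of the admissible one-letter successors (compatible with $v$ and creating no square of period at most $L$) is at least $\alpha_k\omega_k(\sigma)$.
\item For the merged phase, this must hold both for a free next position and for a next position forced to each of the three letters.
\end{itemize}
Then $W_n=\sum_{u\in C_n}\omega_{k(n)}(\sigma(u))$ grows, and the long-period losses are at most $\max\omega/\min\omega$ times a geometric tail.

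Exhibiting such weights (or an equivalent certificate) for gaps of $18$, and checking the resulting finitely many inequalities, is the substantive content of the theorem. Your proposal only asserts that some finite check will succeed, so it is a plan rather than a proof. Such a method also gives only a sufficient condition; nothing in it would make the check succeed ``exactly when'' $p_i\ge 18$.
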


We are now ready to prove Theorem \ref{thm:pq_coprime_st_w_respect_star_and_is_sqfree} :

\begin{proof}[Proof of Theorem \ref{thm:pq_coprime_st_w_respect_star_and_is_sqfree}]
    Without loss of generality, assume that $p \le q$.
    Let us fix $\subsequence{w}{q}$ to be any infinite ternary square-free word $s$. Since $p$ and $q$ are coprime, and $\subsequence{w}{q}$ is fixed, there are some constraints on $\subsequence{w}{p}$ where either a letter is forced (when $\subsequence{w}{p}$ and $\subsequence{w}{q}$ coincide in $w$), or a letter is forbidden (when $\subsequence{w}{p}$ and $\subsequence{w}{q}$ are at distance one in $w$). 
    Let $(u_i)_{i \ge 0}$ be the ordered sequence of the positions of the constraints on $\subsequence{w}{p}$ in $\subsequence{w}{q}$ and let $(v_i)_{i \ge 0}$ be the ordered sequence of the positions of the constraints on $\subsequence{w}{p}$ in $\subsequence{w}{p}$. That is, if the $i$-th constraint on $\subsequence{w}{p}$ is that ${\subsequence{w}{p}}_j$ must be equal to or different from ${\subsequence{w}{q}}_{j'}$, then $u_i = j'$ and $v_i = j$.
    
    By Lemma \ref{lemma:description_of_constraints_distance_for_p_q_coprime}, there exist exactly two integers $a$, $b \ge 0$ such that $2a + b = q$ and for every integer $i \ge 0$, $v_{3i} = iq$, $v_{3i + 1} = iq + a$, $v_{3i + 2} = iq + a + b$, and $v_{3i + 3} = iq + 2a + b = (i + 1)q$. 
    In particular, $\subsequence{w}{p}$ must be compatible with the partial word 
    $$w' = \prod_{i\ge0}s_{u_{3i}}\diamond^{a - 1}\overline{s_{u_{3i + 1}}}\diamond^{b - 1}\overline{s_{u_{3i + 2}}}\diamond^{a - 1}.$$ 
    
    If $a \ge 19$ and $b \ge 19$, then by Theorem \ref{thm:borne_19_square_free_from_forced_letters}, there exists an infinite ternary square-free word $w''$ compatible with $w'$, and we can simply set $\subsequence{w}{p} = w''$. Otherwise, let $\Gamma = (\gamma_i)_{i \ge 0}$ be the guiding sequence of constant value $26$, and let $t$ be an infinite ternary square-free word such that $h_\Gamma(t)_0 = s_0$. By construction, there exists $n \ge 0$ such that the prefix of length $v_n + 1$ of $h_\Gamma(t)$ is compatible with $w'$. We will show how to increase the value of $n$ by only doing modifications on $\Gamma$ so that $h_\Gamma(t)$ remains square-free.
\begin{itemize}
    \item If $a \le 18$, then by definition of $a$ and $b$, $b \ge q - 2a \ge 328$ since by hypothesis $q \ge 364$.  Let $m \le 2$ be such that $v_{n - m} - v_{n - m - 1} = b$, and let $\mathcal{P} = \pattern{\overline{s_{u_{n - m}}}}{a}{s_{u_{n - m + 1}}}\pattern{}{a}{\overline{s_{u_{n - m + 2}}}}$. 
    \begin{itemize}
        \item If $\mathcal{P} \in \mathcal{P}_{bad}$, then $\mathcal{P}$ is $(13, h)$-constructible by Lemma \ref{lemma:Mbad_patterns_are_13_h_constructible}. Therefore, we can apply Lemma \ref{lemma:contraction_for_Delta_constructible_patterns} with $N = v_{n - m - 1}$, $N' = v_{n - m}$, $\mathcal{P}$, and $\Delta \le 13$ and we indeed have that $N' \ge N + 26\lceil(\Delta + 1)/3\rceil + 198$ since by hypothesis, $N' - N = b \ge 328$ and $26\lceil(\Delta + 1)/3\rceil + 198 \le 328$.
        \item If $\mathcal{P} \not\in \mathcal{P}_{bad}$, then $\mathcal{P}$ is $(27, h_{26})$-recurrent by Lemma \ref{lemma:pattern!a<-d->!b_distance_and_pattern!a<-d->b<-d->!c_distance} so we can apply 
        Lemma \ref{lemma:contraction_for_Delta_recurrent_patterns} with $N = v_{n - m - 1}$, $N' = v_{n - m}$, $\mathcal{P}$, and $\Delta \le 27$ and we indeed have that 
        $N'\ge N + 4 + 26\lceil\Delta/3\rceil$ 
        since by hypothesis, $N' - N = b \ge 328$ and $4 + 26\lceil\Delta/3\rceil \le 238$.
    \end{itemize}
    \item If $b \le 18$, then by definition of $a$ and $b$, $a \ge (q - b)/2 \ge 173$ since by hypothesis, $q \ge 364$.
    \begin{itemize}
        \item If $v_{n + 1} - v_n = a$, then we can apply Lemma \ref{lemma:contraction_for_Delta_recurrent_patterns} with $N = v_n$, $N' = v_{n + 1}$, $\mathcal{P} = s_{u_{n + 1}}$, and $\Delta = 3$ since the pattern $a$ is $(3, h_{26})$-recurrent, and we indeed have that $N' \ge N + 4 + 26\lceil\Delta/3\rceil$ since by hypothesis,  $N' - N = a \ge 173$ and $4 + 26\lceil\Delta/3\rceil \le 30$.
        \item Otherwise, $v_{n + 1} - v_n = b$. By Lemma \ref{lemma:pattern!a<-d->!b_distance_and_pattern!a<-d->b<-d->!c_distance}, the pattern $\mathcal{P} = \pattern{\overline{s_{u_n}}}{b}{\overline{s_{u_{n + 1}}}}$ is $(6, h_{26})$-recurrent, we can thus apply Lemma \ref{lemma:contraction_for_Delta_recurrent_patterns} with $N = v_{n - 1}$, $N' = v_n$, $\mathcal{P} = \pattern{\overline{s_{u_n}}}{b}{\overline{s_{u_{n + 1}}}}$, and $\Delta \le 6$ and we indeed have that $N' \ge N + 4 + 26\lceil\Delta/3\rceil$ since by hypothesis,  $N' - N = a \ge 173$ and $4 + 26\lceil\Delta/3\rceil \le 56$.
    \end{itemize}
\end{itemize}

In all cases, by applying Lemma \ref{lemma:contraction_for_Delta_recurrent_patterns} or Lemma \ref{lemma:contraction_for_Delta_constructible_patterns}, we can construct a guiding sequence $\Gamma'$ such that the prefix of length $v_{n + 1} + 1$ of $h_{\Gamma'}(t)$ is compatible with $w'$, as desired.
\end{proof}

We can finally apply Theorem \ref{thm:sufficient_condition} together with Theorem \ref{thm:pq_coprime_st_w_respect_star_and_is_sqfree} to deduce the main result of this Section.
\begin{theorem}\label{thm:pq_large}
    For all relatively prime integers $p, q \ge 331$ such that $\max(p, q) \ge 364$, there exists an infinite ternary square-free word that is square-free modulo $p$ and $q$.
\end{theorem}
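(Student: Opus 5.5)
The plan is to chain the two preceding theorems. Let $p, q$ be relatively prime with $p, q \ge 331$ and $\max(p,q) \ge 364$. Then in particular $p, q \ge 3$, so all hypotheses of Theorem~\ref{thm:pq_coprime_st_w_respect_star_and_is_sqfree} hold. That theorem provides an infinite ternary word $w$ with three properties: it satisfies condition~(\ref{condAdjacent}), $\subsequence{w}{p}$ is square-free, and $\subsequence{w}{q}$ is square-free. The word $w$ itself need not be square-free; this is exactly the situation Theorem~\ref{thm:sufficient_condition} is designed for.

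Next I apply Theorem~\ref{thm:sufficient_condition} to $w$, whose hypotheses are $p, q \in \mathbb{N}_{\ge 331}$ and that $w$ satisfies~(\ref{condAdjacent}). Both hold, so we obtain an infinite ternary square-free word $w'$ with $\subsequence{w'}{p} = \subsequence{w}{p}$ and $\subsequence{w'}{q} = \subsequence{w}{q}$. These two subsequences are square-free by the choice of $w$. Hence $w'$ is square-free, square-free modulo $p$, and square-free modulo $q$, which is the statement.

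There is essentially no obstacle here; the only step requiring care is checking that the thresholds line up. Theorem~\ref{thm:sufficient_condition} needs both $p, q \ge 331$. Theorem~\ref{thm:pq_coprime_st_w_respect_star_and_is_sqfree} needs only $p, q \ge 3$ and $\max(p,q)\ge 364$. The hypothesis of the statement is precisely the conjunction of these requirements. Coprimality is used only in Theorem~\ref{thm:pq_coprime_st_w_respect_star_and_is_sqfree}, through Lemma~\ref{lemma:description_of_constraints_distance_for_p_q_coprime}. Theorem~\ref{thm:sufficient_condition} does not need it, so nothing further has to be verified.
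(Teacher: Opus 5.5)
Your proposal is correct and matches the paper's argument. The paper proves the theorem in one line the same way: it takes the word from Theorem~\ref{thm:pq_coprime_st_w_respect_star_and_is_sqfree} and then applies Theorem~\ref{thm:sufficient_condition}, whose statement needs only $p,q\ge 331$ and condition~(\ref{condAdjacent}), not coprimality.
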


\section{Large $q$ and small $p$}\label{chap:smallplargeq}

In Section \ref{chap:pqlarge}, we proved that when $p$ and $q$ are coprime and large enough, there always exists an infinite ternary square-free word that is square-free modulo $p$ and square-free modulo $q$. In this section, we will present another strategy that will allow us to deal with pairs with $p$ small and $q$ large (and symmetrically, with $p$ large and $q$ small). Our strategy is based on the existence of some morphisms and on the notion of compatible words.

The \emph{shift} of a word $w = w_0w_1w_2\dots$, denoted by $s(w)$, is the word $w_1w_2\dots$. If we consider a word $w$, then for every integer $\alpha \ge 0$ and $p \ge 1$, $\subsequence{s^\alpha(w)}{p} = w_\alpha w_{\alpha + p}w_{\alpha + 2p}\dots$ and we call this word the \emph{subsequence congruent to $\alpha$ modulo $p$ of $w$}.
The following proposition provides a sufficient condition on $p$ and $q$ for the existence of an infinite ternary square-free word that is also square-free modulo $p$ and square-free modulo $q$.

\begin{proposition}\label{prop:circular_morphism_sf_sfmodp}
    Let $p \ge 1$. If there exists a morphism $g$ such that 
    \begin{itemize}
        \item $g$ is circular,
        \item $g$ is $kp$-uniform for some $k \ge 1$,  
        \item $g$ is square-free, and
        \item there exists $\alpha \ge 0$ such that for every infinite ternary square-free word $t$, $\subsequence{s^\alpha(g(t))}{p}$ is square-free,
    \end{itemize}
    then for every integer $q \ge 19kp$, 
    there exists an infinite ternary square-free word $w$ such that $\subsequence{w}{p}$ and $\subsequence{w}{q}$ are square-free.
\end{proposition}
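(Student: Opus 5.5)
The plan is to take $w = s^\alpha(g(t))$ for a carefully chosen infinite ternary square-free word $t$. For any such $t$, two of the three properties come for free. Since $g$ is square-free, $g(t)$ is square-free, and so is its suffix $w$. By the last hypothesis, $\subsequence{w}{p} = \subsequence{s^\alpha(g(t))}{p}$ is square-free. So everything reduces to choosing $t$ so that $\subsequence{w}{q}$ is square-free. We will in fact make $\subsequence{w}{q}$ equal to a prescribed square-free word $x$, using Theorem \ref{thm:borne_19_square_free_from_forced_letters}.

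The key computation uses circularity. Let $L = kp$ and $c = g(0)$. Because $g$ is circular, $g(a) = \pi_3^a(g(0))$ for every letter $a$, so $g(a)_r = c_r + a \bmod 3$. The letter $\subsequence{w}{q}_j = g(t)_{\alpha + jq}$ lies in the block $g(t_{m_j})$, where $m_j = \lfloor(\alpha + jq)/L\rfloor$, at offset $r_j = (\alpha + jq) \bmod L$. Hence
\[
\subsequence{w}{q}_j = t_{m_j} + c_{r_j} \pmod 3 .
\]
Imposing $\subsequence{w}{q} = x$ is therefore equivalent to forcing the letter $t_{m_j} = x_j - c_{r_j} \pmod 3$ at each index $m_j$. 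These are single forced letters, and no other letters of $t$ are constrained.

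Next we check the spacing of these forced letters. Since $q \ge 19L$, we have $m_{j+1} - m_j \ge \lfloor q/L\rfloor \ge 19$. In particular the $m_j$ are pairwise distinct, so the constraints never conflict, and consecutive forced letters are separated by at least $18$ holes. This is exactly the hypothesis of Theorem \ref{thm:borne_19_square_free_from_forced_letters}. That theorem then yields an infinite ternary square-free $t$ compatible with the partial word, and for this $t$ the word $w$ satisfies all requirements.

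The main obstacle is the start of the partial word. Theorem \ref{thm:borne_19_square_free_from_forced_letters} requires a forced letter at position $0$, whereas our first constraint sits at $m_0 = \lfloor \alpha/L\rfloor$, which may be positive.
\begin{itemize}
\item If $m_0 = 0$, there is nothing to do.
\item If $m_0 \ge 19$, we add an arbitrary forced letter at position $0$, which keeps all gaps at least $19$.
\item If $0 < m_0 < 19$, an extra letter at position $0$ would violate the spacing. The first thing I would try is to add, for some suitable choice of $x_0$, a forced letter at $0$ consistent with the prefix constraints. Otherwise, I would use $x$ to shift all constraints: replace $\subsequence{w}{q}$ by a suffix-compatible choice, or start the $q$-progression later. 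Since any suffix of a square-free word is square-free, it suffices to control $\subsequence{w}{q}$ from an index $j_0$ with $m_{j_0} \ge 19$ onward, and then prepend constraints for the first few indices greedily.
\end{itemize}
I expect this boundary bookkeeping, rather than the core argument, to need the most care.
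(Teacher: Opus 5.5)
Your core argument is the paper's. You fix the target $q$-subsequence and use circularity to turn each constraint into a single forced letter of the preimage. You note that consecutive forced positions differ by at least $\lfloor q/kp\rfloor\ge 19$, and you conclude with Theorem~\ref{thm:borne_19_square_free_from_forced_letters}. You are in fact more careful than the paper, which divides $iq$ rather than $\alpha+iq$ by $kp$. The paper therefore silently assumes that the first forced letter sits at position $0$. That holds whenever $\alpha<kp$, which covers every application in the paper, where $\alpha\in\{0,1\}$.

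The case $0<\lfloor\alpha/kp\rfloor<19$ that you leave open is not settled by either of your suggestions as written.
\begin{itemize}
\item Adding a forced letter at position $0$ fails whatever value you choose for $x_0$. The obstruction is the spacing required by the theorem, not the letter values.
\item Passing to a suffix $s^{\beta}(w)$ is safe only when $p\mid\beta$. A shift by $j_0q$ with $p\nmid j_0q$ lands in a different residue class modulo $p$, and the fourth hypothesis says nothing about that class.
\end{itemize}

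The fix is one line. Replace $\alpha$ by $\alpha'=\alpha+mp$, with $m$ large enough that $\alpha'\ge 19kp$. Since $\subsequence{s^{\alpha'}(g(t))}{p}=s^{m}\bigl(\subsequence{s^{\alpha}(g(t))}{p}\bigr)$ is a suffix of a square-free word, the fourth hypothesis still holds for $\alpha'$. The first forced position becomes $\lfloor\alpha'/kp\rfloor\ge 19$, so your second bullet (a dummy forced letter at $0$) finishes the proof.

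Equivalently, your suffix idea works if you take $j_0=p$, impose only the constraints with $j\ge p$, and output $s^{pq}(w)$. Its $p$-subsequence is $s^{q}(\subsequence{w}{p})$, and its $q$-subsequence is a suffix of $x$. The first indices then need no separate handling.
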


\begin{proof}
Let $g$ be a morphism as in the statement of Proposition \ref{prop:circular_morphism_sf_sfmodp}.
By hypothesis, for every infinite ternary square-free word $w$, $s^\alpha(g(w))$ and $\subsequence{s^\alpha(g(w))}{p}$ are square-free.
Let us fix $t$ to be any infinite ternary square-free word. We would like to find a word $w$ such that $\subsequence{s^\alpha(g(w))}{q} = t$ and doing so, $s^\alpha(g(w))$, $\subsequence{s^\alpha(g(w))}{p}$ and $\subsequence{s^\alpha(g(w))}{q}$ would be square-free. 

Since $g$ is circular, for every integer $i < kp$, $\{g(0)_i, g(1)_i, g(2)_i\}= \Sigma_3$. Let $i \ge 0$ and let $d$, $r$ be the quotient and the remainder in the Euclidean division of $iq$ by $kp$ so that $iq = dkp + r$. Since $t$ is fixed, $g(w)_{dkp + r}$ must be equal to $t_i$. Since $g$ is circular, and by the previous observation, this constraint is actually a constraint on $w_d$. Let $d'$, $r'$ be the quotient and the remainder in the Euclidean division of $(i + 1)q$ by $kp$. We have that $(i + 1)q - iq = q = (d' - d)kp + r' - r \ge 19kp$ so $d' - d \geq \frac{19kp - r + r'}{kp} > 18$ and since $d' - d$ is an integer, then $d' - d \geq 19$. Let $(u_i)_{i \ge 0}$ be the sequence of positions of $w$ that have forced values. The previous observation implies that for every integer $i$, $u_{i + 1} - u_i \ge 19$. Therefore, by Theorem \ref{thm:borne_19_square_free_from_forced_letters}, $w$ exists.
\end{proof}

In order for us to apply Proposition \ref{prop:circular_morphism_sf_sfmodp}, we need such morphisms. It is known that such morphisms exist for almost all integers $p$ with $k = 1$ and $\alpha = 0$. Notice that if a morphism $g$ is circular with images of size $p$, then in particular it is square-free modulo $p$ (in this case, up to a permutation of the alphabet, $\subsequence{g(w)}{p}=w$).
\begin{proposition}[Currie, 2012 \cite{currie2012infinite}]\label{prop:currie2021infinite}
    For every integer $p \in \{13, 17, 18, 19\} \cup \mathbb{N}_{\ge 23}$, there exists a square-free circular morphism with images of length $p$.
\end{proposition}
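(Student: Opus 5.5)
The plan is to split the statement into a finite computational part and an inductive part for all large lengths. The tool for the finite part is Crochemore's criterion: a uniform morphism on a ternary alphabet is square-free as soon as the images of all square-free words of length $3$ are square-free. For a circular morphism, the images of $1$ and $2$ are obtained from $h(0)$ by applying $\pi_3$, so only the single word $h(0)$ of length $p$ has to be chosen. Deciding whether a square-free circular morphism of length $p$ exists is therefore a finite search. I would run this search, with backtracking on $h(0)$ and pruning by square-freeness of the partial images, for $p \in \{13,17,18,19\}$ and for every $p$ from $23$ up to an explicit bound $B$ fixed below.

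For the induction, the key observation is that the multi-valued morphism $h$ of Section~\ref{sec:prel_and_palindromes} is itself circular for each fixed index. Suppose $g$ is a square-free circular morphism of length $m$, and fix a finite sequence $(\gamma_0,\dots,\gamma_{m-1}) \in \{23,24,25,26\}^m$. Define
\[
H(a) = h_{\gamma_0}(g(a)_0)\, h_{\gamma_1}(g(a)_1) \cdots h_{\gamma_{m-1}}(g(a)_{m-1})
\]
for $a \in \Sigma_3$, using the same sequence $(\gamma_j)$ for every letter. This $H$ has the following properties.
\begin{itemize}
\item It is uniform of length $\sum_j \gamma_j$.
\item It is circular, since $g(\pi_3(a))_j = \pi_3(g(a)_j)$ and each $h_{\gamma_j}$ commutes with $\pi_3$.
\item It is square-free. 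For a square-free word $w$, the word $H(w)$ equals $h_{\Gamma}(g(w))$ for the periodic guiding sequence obtained by repeating $(\gamma_j)$. Here $g(w)$ is square-free, so Lemma~\ref{lemma:h_is_squarefree} applies.
\end{itemize}
To apply Lemma~\ref{lemma:h_is_squarefree}, which is stated for infinite words, I would first extend the finite square-free word $g(w)$ to an infinite square-free word. This is possible because every finite ternary square-free word is a factor of an infinite one, for instance a factor of a suitable image of a uniformly recurrent infinite square-free word. Square-freeness of the infinite image then gives square-freeness of the finite factor $H(w)$.

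Choosing the $\gamma_j$ freely, a single $g$ of length $m$ produces square-free circular morphisms of every length in the interval $[23m, 26m]$. Consecutive intervals $[23m,26m]$ and $[23(m+1),26(m+1)]$ overlap as soon as $26m \ge 23m+23$, that is for $m \ge 8$. So if circular square-free morphisms exist for every length $m$ in a window $[M, 26M/23]$ with $M \ge 23$, these intervals cover all lengths from $23M$ up to $26\cdot 26M/23$. Iterating, all $p \ge 23M$ are covered. Taking $M = 23$, the computational step must verify every length in $[23, 529]$; this fixes $B = 529$, and then all $p \ge 529$ follow by strong induction.

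The main obstacle is the finite verification for the roughly five hundred lengths in $[23,529]$. The search space is small for short $p$, but naive backtracking may become slow for lengths in the hundreds. Already for moderate $p$ (roughly $p \ge 100$), the composition trick can instead be applied to small known base lengths: for instance $[23\cdot 13, 26\cdot 13]$ from $m = 13$, and the intervals coming from $m \in \{17,18,19\}$ and $m \ge 23$. The direct search is then needed only for the lengths below about $300$ that these intervals leave uncovered. Anything still missing would be handled by a randomized search over $h(0)$ with the Crochemore check, which should find solutions quickly for such lengths. Finally, the lengths excluded in the statement ($p \le 12$ and $p \in \{14,15,16,20,21,22\}$) are exactly those for which this search is expected to find nothing, so no claim is made there.
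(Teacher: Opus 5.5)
The paper gives no proof of this statement; it imports it from Currie's work. Your route is a genuinely different, self-contained one, and its central idea is sound. Each $h_i$ commutes with $\pi_3$, so composing a circular square-free $g$ of length $m$ with $h$ along the $m$-periodic guiding sequence gives circular uniform morphisms of every length in $[23m,26m]$. Everything beyond a finite range then follows by strong induction: for $p\ge 529$ take $m=\lfloor p/23\rfloor$, which satisfies $23\le m<p$ and $23m\le p<23m+23\le 26m$.

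However, your proof that $H$ is square-free rests on a false claim: not every finite ternary square-free word is a factor of an infinite one. For example, $0102010$ has no square-free right extension, since appending $0$, $1$, $2$ creates $00$, $(01)^2$, $(0102)^2$. The repair uses a tool you already cite. $H$ is uniform, so by item 2 of Theorem~\ref{thm:crochemore} it suffices to check $H(w)$ for $|w|=3$. By Lemma~\ref{lemma:palindromes_recurrence}, every infinite ternary square-free word contains factors of both shapes $aba$ and $abc$, and letter permutations preserve square-freeness. Hence each such $w$ occurs in some infinite square-free word $x$, and $H(w)$ is a factor of $H(x)=h_\Gamma(g(x))$, which is square-free by Lemma~\ref{lemma:h_is_squarefree}.

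The covering bookkeeping is also off. Knowing all lengths in a window $[M,26M/23]$ only yields $[23M,26^2M/23]$, not all $p\ge 23M$. For instance, from $[23,26]$ you get $[529,676]$, while $677$ needs $m\in\{27,28,29\}$. What the induction actually needs is every length in $[23,528]$, as you finally say.

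Your claim that direct search is needed only below about $300$ is also wrong. Lengths in $[339,390]$ and $[495,528]$ could only arise from $m\in\{14,15,16\}$ or $m\in\{20,21,22\}$, which are precisely the excluded lengths, so no composition with $h$ reaches them. Altogether you must exhibit circular square-free morphisms directly for the $366$ lengths in $\{13,17,18,19\}\cup[23,298]\cup[339,390]\cup[495,528]$.

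This finite verification is the real content of the statement; it is exactly what the citation to Currie supplies. Until it is carried out, with the resulting words checked via Theorem~\ref{thm:crochemore}, your argument is a reduction to a computation rather than a proof, and ``should find solutions quickly'' is not evidence. Note also that left-to-right backtracking prunes poorly here. Squares across the junctions $u\pi_3(u)$ and $u\pi_3^2(u)$ involve the unfinished suffix of $u=g(0)$.
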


We can therefore apply Proposition \ref{prop:circular_morphism_sf_sfmodp} with $k = 1$ and $\alpha = 0$ and with the morphisms from Proposition \ref{prop:currie2021infinite} to obtain the following family of pairs of integers $p$, $q$ for which there exists an infinite ternary square-free word that is square-free modulo $p$ and $q$.

\begin{corollary}\label{coro:construction_from_cyclic_morphisms} 
    For all integers $p$, $q$ such that $p \in \{13, 17, 18, 19\} \cup \mathbb{N}_{\ge 23}$ and $q \ge 19 p$, there exists an infinite ternary square-free word $w$ such that $\subsequence{w}{p}$ and $\subsequence{w}{q}$ are square-free.
\end{corollary}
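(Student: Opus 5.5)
This follows by directly combining Proposition \ref{prop:currie2021infinite} with Proposition \ref{prop:circular_morphism_sf_sfmodp}, taking $k=1$ and $\alpha=0$. Fix $p \in \{13,17,18,19\}\cup\mathbb{N}_{\ge 23}$. Proposition \ref{prop:currie2021infinite} gives a square-free circular morphism $g$ whose images have length $p$, so $g$ is $kp$-uniform with $k=1$. The only remaining hypothesis is the fourth one: we need $\subsequence{g(t)}{p}$ to be square-free for every infinite ternary square-free word $t$.

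To check this, observe that since $g$ is $p$-uniform, $\subsequence{g(t)}{p} = g(t_0)_0\,g(t_1)_0\,g(t_2)_0\cdots$. Circularity gives $g(\pi_3(i)) = \pi_3(g(i))$. Hence the map $i \mapsto g(i)_0$ is $i \mapsto \pi_3^{j}(i)$, where $j$ is defined by $g(0)_0 = \pi_3^{j}(0)$. This map is a permutation of $\Sigma_3$. So $\subsequence{g(t)}{p}$ is the image of $t$ under a letter permutation, and is therefore square-free because $t$ is. This is the remark made just before Proposition \ref{prop:currie2021infinite}.

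With all four hypotheses verified, Proposition \ref{prop:circular_morphism_sf_sfmodp} yields, for every $q \ge 19kp = 19p$, an infinite ternary square-free word $w$ with $\subsequence{w}{p}$ and $\subsequence{w}{q}$ square-free. Here $w$ is taken to be $g(w_0)$ for a suitable square-free preimage $w_0$, obtained through Theorem \ref{thm:borne_19_square_free_from_forced_letters}. No coprimality assumption is needed, because the proof of Proposition \ref{prop:circular_morphism_sf_sfmodp} uses only the gap bound $q \ge 19kp$.

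I do not expect a real obstacle, since the statement is an immediate instantiation. The only point needing care is the observation that circularity forces the first letters of the images to form a permutation of the alphabet.
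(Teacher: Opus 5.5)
Your proposal is correct and follows the paper's route. You instantiate Proposition \ref{prop:circular_morphism_sf_sfmodp} with $k=1$, $\alpha=0$ and Currie's circular morphisms from Proposition \ref{prop:currie2021infinite}, and you use the fact that for a $p$-uniform circular morphism $g$, $\subsequence{g(t)}{p}$ is $t$ up to a letter permutation; your check that $i \mapsto g(i)_0$ equals $\pi_3^{j}$ spells out the detail the paper only states in passing.
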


We provide in Table \ref{fig:morphisms_cone_remaining_cases} (also in \cite[table\_\ref{fig:morphisms_cone_remaining_cases}\_morphisms.txt]{code}) morphisms for all remaining cases other than $p=6$. 
More precisely, Table \ref{fig:morphisms_cone_remaining_cases} contains for all $p$ not covered by Corollary \ref{coro:construction_from_cyclic_morphisms} and different from $6$ the morphism, the parameters $k$ and $\alpha$ used in Proposition \ref{prop:circular_morphism_sf_sfmodp}, and the upper bound on the smallest value of $q$ from which there exist infinite ternary square-free words that are square-free modulo $p$ and square-free modulo $q$. This leads to the following Proposition.
\begin{proposition}\label{prop:sparse_p_pairs}
    For every $p \in \{3, 4, 5, 7, 8, 9, 10, 11, 12, 14, 15, 16, 20, 21, 22\}$, there exists $l$ such that for every $q \geq l$, there exists an infinite ternary square-free word that is square-free modulo $p$ and square-free modulo $q$.
\end{proposition}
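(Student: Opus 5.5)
The plan is to apply Proposition \ref{prop:circular_morphism_sf_sfmodp} once for each listed value of $p$, so the whole proof reduces to exhibiting, for each $p \in \{3, 4, 5, 7, 8, 9, 10, 11, 12, 14, 15, 16, 20, 21, 22\}$, a morphism $g$ together with parameters $k \ge 1$ and $\alpha \ge 0$ that satisfy its four hypotheses. Once such a triple $(g,k,\alpha)$ is in hand, Proposition \ref{prop:circular_morphism_sf_sfmodp} yields the statement with $l = 19kp$. These are exactly the entries recorded in Table \ref{fig:morphisms_cone_remaining_cases}.

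For each $p$ I would search by computer for a circular $kp$-uniform morphism $g$ over $\Sigma_3$. Circularity means $g$ is determined by $g(0)$, with $g(1)=\pi_3(g(0))$ and $g(2)=\pi_3^2(g(0))$. I would first try $k=1$ and $\alpha=0$, where the mod-$p$ condition is automatic. Currie's result in Proposition \ref{prop:currie2021infinite} shows this fails for the listed $p$, so I would increase $k$ (and let $\alpha$ range over $\{0,\dots,kp-1\}$) until a candidate is found. The search enumerates square-free words $g(0)$ of length $kp$, keeps those whose three rotations form a square-free morphism, and then tests the mod-$p$ condition.

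Two conditions must be certified for a candidate.

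\textbf{Square-freeness of $g$.} For a uniform morphism I would invoke a standard finite test, for instance Crochemore's criterion. It says a uniform morphism over a ternary alphabet is square-free as soon as the images of all square-free words of length at most $3$ are square-free. This is a finite check.

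\textbf{Square-freeness of $\subsequence{s^\alpha(g(t))}{p}$ for every square-free $t$.} Because $|g(a)| = kp$, the subsequence modulo $p$ of $s^\alpha(g(t))$ is obtained by concatenating, block by block, the letters of $g(t_j)$ at positions $\equiv \alpha \pmod p$. Up to a fixed finite prefix effect when $\alpha \ge p$, it is the image of $t$ under the $k$-uniform morphism $g'$ defined by
$$g'(a) = g(a)_{\alpha'}\, g(a)_{\alpha'+p} \cdots g(a)_{\alpha'+(k-1)p}, \qquad \alpha' = \alpha \bmod p.$$
A shift of the block structure can also occur. To handle this cleanly, I would consider the morphism sending $ab$ to the relevant $k$ letters straddling the images of $a$ and $b$. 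Alternatively, I would choose $\alpha < p$, so that the subsequence is exactly $g'(t)$ followed by a suffix. Then $g'$ is again circular and uniform, and its square-freeness is once more a finite check by the same criterion.

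The main obstacle is the existence part: actually finding, for each of the fifteen values of $p$ (in particular the smallest, $p=3,4,5$), a circular square-free morphism whose extracted subsequence morphism $g'$ is also square-free. This must happen at a length $kp$ small enough for the search to terminate. I expect this to require a nontrivial $k$ and a careful choice of $\alpha$, with the search space pruned by building $g(0)$ incrementally and discarding prefixes that already create squares in $g$ or in $g'$. The verification itself is mechanical; the substance of the proof is the list of morphisms in Table \ref{fig:morphisms_cone_remaining_cases}, whose correctness is checked by the accompanying program.
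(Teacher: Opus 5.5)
Your plan matches the paper's argument. It applies Proposition~\ref{prop:circular_morphism_sf_sfmodp} to the circular $kp$-uniform morphisms of Table~\ref{fig:morphisms_cone_remaining_cases}, certifies both $g$ and the extracted morphism $g^{\alpha,p}$ (your $g'$) with Crochemore's criterion for uniform morphisms, and thereby gets $l = 19kp$; the table lists $20kp$, which is only a slightly weaker bound. One simplification: since $|g(a)| = kp$ is a multiple of $p$, positions congruent to $\alpha$ modulo $p$ never straddle block boundaries, so $\subsequence{s^\alpha(g(t))}{p}$ is always a suffix of $g'(t)$ with $\alpha' = \alpha \bmod p$, and your two-letter ``straddling'' morphism is unnecessary.
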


An implementation of an algorithm verifying that the morphisms from Table \ref{fig:morphisms_cone_remaining_cases} have the desired properties can be found in \cite[table\_\ref{fig:morphisms_cone_remaining_cases}.cpp]{code}. 
The verification uses the following characterization of square-free morphisms due to Crochemore.

\begin{theorem}[Crochemore, 1982~\cite{CROCHEMORE1982221}]\label{thm:crochemore}
    Let $g : \Sigma^* \rightarrow \Sigma^*$ be a morphism.
    \begin{enumerate}
        \item If $|\Sigma| = 3$ then $g$ is square-free if and only if for every square-free word $w$ over $\Sigma$ of length $5$, $g(w)$ is square-free.
        \item If $g$ is uniform then $g$ is square-free if and only if for every square-free word $w$ over $\Sigma$ of length $3$, $g(w)$ is square-free.
    \end{enumerate}
\end{theorem}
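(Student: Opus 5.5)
Since the statement is a classical result that the paper only cites, I would reconstruct Crochemore's argument rather than derive it from the earlier lemmas. In both items the forward implication is immediate: a square-free morphism maps every square-free word, in particular those of length $5$ (resp. $3$), to a square-free word. All the work is in the converse.

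For the converse of the first item, I would argue by minimal counterexample. Suppose some square-free $w$ has $g(w)$ containing a square $uu$, and choose $w$ of minimal length, so $|w|\ge 6$. Minimality lets me assume that $uu$ starts inside $g(w_0)$ and ends inside $g(w_{|w|-1})$.

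The first step is to extract structural properties of $g$ from the finite test. Since every square-free word of length $5$ maps to a square-free word, I expect to deduce the following:
\begin{itemize}
\item $g$ is nonerasing, and the images $g(a)$ are pairwise distinct;
\item no image $g(a)$ is a factor of another image $g(b)$ with $b\neq a$;
\item a weak synchronization property: if $g(a)$ occurs inside $g(xyz)$ for a square-free factor $xyz$, then the occurrence is the "natural" one.
\end{itemize}
Each of these would be shown by exhibiting a short square-free word whose image contains a square when the property fails. This is a finite case analysis over three letters.

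Next I would cut the square $uu$ at its midpoint. Write $w=w_0 x w_{|w|-1}$ and look at the images of $w$ that lie entirely in the first $u$ and those that lie entirely in the second $u$. Synchronization should force the decomposition of the second half into images to be a translate of the decomposition of the first half. Injectivity on letters then turns this into a square in $w$ itself, or into a square in a factor of length at most $5$ of $w$ when the boundary blocks are involved, and either way we get a contradiction.

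For the second item, uniformity (length $k$) simplifies the argument:
\begin{itemize}
\item If $|u|\equiv 0\pmod k$, the two halves are aligned block by block. Comparing full blocks via injectivity, and comparing the partial boundary blocks via the test on length-$3$ words, yields a square in $w$.
\item If $|u|\not\equiv 0\pmod k$, some full image $g(a)$ reappears at a shifted offset inside $g(bc)$. The length-$3$ test, which covers all words $bca$ and $abc$, should forbid this: such an occurrence would create a short square in the image of a square-free word of length $3$.
\end{itemize}

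The main obstacle is the synchronization step in the nonuniform ternary case. I need to show that length-$5$ test words suffice to rule out every misaligned occurrence of an image. I would first try Crochemore's approach: prove that the test forces $g$ to be "ps-free", meaning no image is a prefix–suffix concatenation of other images in an unintended way. I would then handle the remaining short cases by explicit enumeration of square-free ternary words.
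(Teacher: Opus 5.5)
\textbf{Gap in the first item.} The paper does not prove this statement; it is quoted from Crochemore. Your sketch therefore has to stand on its own, and for the first item it does not. The properties you plan to extract from the length-$5$ test are false in general, even for square-free morphisms. These are: no image is a factor of another image, and every occurrence of $g(a)$ inside $g(xyz)$ is the natural one.

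Here is a counterexample. Take $f(a)=1$, $f(b)=012$, $f(c)=3$ over $\{0,1,2,3\}$.
\begin{itemize}
\item In $f(w)$ with $w$ square-free, the letters $0,2,3$ occur only at fixed places of fixed blocks.
\item A $1$ is the middle letter of $f(b)$ iff it is preceded by $0$, iff it is followed by $2$.
\item Squares of period $1$ do not occur. Hence inside any square $uu$ the block structure is $|u|$-periodic, and a square in $f(w)$ yields a factor $vv$ or $bvbvb$ of $w$.
\end{itemize}
So $f$ is square-free. Now compose $f$ with any square-free morphism $h$ from four letters into $\Sigma_3$ (such morphisms are classical). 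This gives a square-free $g=h\circ f:\Sigma_3^*\to\Sigma_3^*$, which therefore passes the length-$5$ test. Yet $g(a)=h(1)$ is an internal factor of $g(b)=h(0)h(1)h(2)$.

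Consequently, the step where synchronization ``forces the decomposition of the second half to be a translate of the first'' cannot be derived. The copy of an image lying in one half of the square may sit strictly inside a single longer image of the other half.

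Short test words do give weaker facts:
\begin{itemize}
\item ps-freeness: if $g(a)=ps$ with $p$ a prefix of $g(b)$, $s$ a suffix of $g(c)$, and $a\notin\{b,c\}$, then $g(cab)$ contains $(sp)^2$;
\item a copy of $g(a)$ straddling the boundary of two consecutive images produces a square in the image of a square-free word of length at most $3$.
\end{itemize}
None of these excludes nested copies. Handling nested copies is the real content of the ternary theorem, and it is exactly where $|\Sigma|=3$ and the constant $5$ must be used. Your proposal contains no argument for this case.

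\textbf{The uniform item.} Your plan here is essentially sound, precisely because equal lengths and pairwise distinct images make nesting impossible. Two corrections are needed.
\begin{itemize}
\item Consider a misaligned occurrence $g(b)=xp$, $g(a)=ps$, $g(c)=sy$ with $p,s$ nonempty. It is refuted by $g(bac)$, which contains $pp$. When $a=b$, use $g(ac)$, which contains $xx$; when $a=c$, use $g(ba)$, which contains $yy$. The words $abc$ and $bca$ you name need not give a square.
\item You should check that some full block lies entirely in one half of the square. This holds as soon as the minimal counterexample has length at least $4$, since at most one of the full blocks is cut by the midpoint.
\end{itemize}
Your handling of the aligned case by length-$3$ words is right. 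Let $S$ be the common suffix of the first block and the block cut by the midpoint, and $P$ the common prefix of the latter and the last block. Then the image of these three letters contains $(SP)^2$, unless two consecutive ones among them coincide, in which case $w$ itself contains a square.
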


If $g : \Sigma_3^* \rightarrow \Sigma_3^*$ is a $kp$-uniform morphism for some integers $k$, $p \geq 1$, then let $g^{\alpha, p} : \Sigma_3^* \rightarrow \Sigma_3^*$ be the morphism such that $g^{\alpha, p}(a) = \subsequence{s^\alpha(g(a))}{p}$ for every $a \in \Sigma_3$. Observe that for every $\alpha \in \{0, \dots, p - 1\}$, and for every ternary word $t$, $\subsequence{s^\alpha(g(t))}{p} = g^{\alpha, p}(t)$, so to verify that $g$ satisfies all the hypotheses of Proposition \ref{prop:circular_morphism_sf_sfmodp}, it is sufficient to use Theorem \ref{thm:crochemore} on both $g$ and $g^{\alpha, p}$.

\section{Large $q$ and $p=6$}\label{sec:p_equal_6}
In the previous section, we proved that for every $p\not\in\{2,6\}$ and every large enough $q$, there exists an infinite ternary square-free word that is square-free modulo $p$ and square-free modulo $q$. When $p=2$, it is not possible to construct an infinite ternary square-free word that is also square-free modulo $2$.
This section is thus dedicated to the remaining case $p = 6$. 

Let $R_{01}$, $R_{02}$, $R_{10}$, $R_{12}$, $R_{20}$, $R_{21}$, be ternary words defined as follows :

\begin{center}
\begin{tabular}{ c c}
 $R_{01} \coloneqq\textbf{0}12102$ & $R_{02} \coloneqq\textbf{0}12021$ \\ 
 $R_{10} \coloneqq\textbf{1}20102$ & $R_{12} \coloneqq\textbf{1}20210$ \\
 $R_{20} \coloneqq\textbf{2}01021$ & $R_{21} \coloneqq\textbf{2}01210$  
\end{tabular}
\end{center}

Observe that for every $i$, $j$, $R_{ij} = iabcde$ where $a = \pi_3(i)$, $b = \pi_3(a)$, $c = \pi_3(d)$, and $d = \pi_3(e)$ and this observation, together with the fact that $R_{ij}j$ must be square-free, uniquely determines each of the $R_{ij}$.
For every ternary square-free word $t$, we define the \emph{completion} of $t$ by $\mathcal{R}$ as 
$$\mathcal{R}(t) = \prod_{i \ge 0}^{|t| - 1}R_{t_it_{i + 1}}$$
and this definition naturally extends to infinite words.
For instance, $\mathcal{R}(012) = \textbf{0}12102\textbf{1}20210$.
The rule set $\mathcal{R}$ can be seen as a functional finite-state transducer as shown in Figure \ref{fig:6-completion_transducer}. This notion of rule set is also closely related to the notion of two-blocks substitutions as studied in \cite{DEKKING2023102536}. 
\begin{figure}[H]
    \centering
    \includegraphics[scale=0.5]{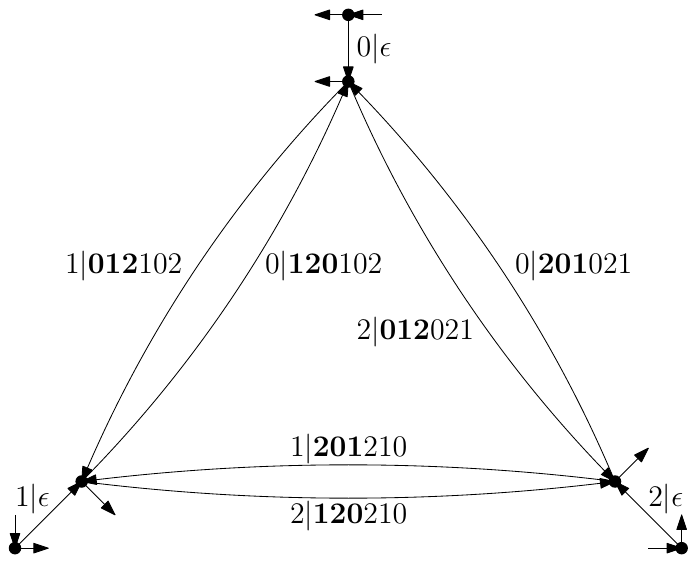}
    \caption{$\mathcal{R}$ represented as a finite-state transducer.}
    \label{fig:6-completion_transducer}
\end{figure}

Notice that $\subsequence{\mathcal{R}(t)}{6} = t'$ where $t'$ is the prefix of size $|t| - 1$ of $t$. Hence, if $t$ is square-free, then $\mathcal{R}(t)$ is square-free modulo $6$. Proposition \ref{prop:6-compl-sqfree} also states that applying $\mathcal{R}$ to square-free words only produces square-free words, thus, with $\mathcal{R}$, we can construct words that are square-free and square-free modulo $6$.
\begin{proposition}\label{prop:6-compl-sqfree}
    For every (infinite) ternary square-free word $t$, $\mathcal{R}(t)$ is an (infinite) ternary square-free word.
\end{proposition}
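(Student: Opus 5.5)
Since $\mathcal{R}$ is a transducer that reads pairs of consecutive letters rather than a morphism, I would prove Proposition~\ref{prop:6-compl-sqfree} by hand instead of through Theorem~\ref{thm:crochemore}. The plan is a synchronization argument followed by a finite check on short squares.

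It suffices to treat finite square-free words $t$: every factor of $\mathcal{R}(t)$ for infinite $t$ lies in $\mathcal{R}(t')$ for some finite prefix $t'$. Write $W=\mathcal{R}(t)=R_{t_0t_1}R_{t_1t_2}\cdots$, a concatenation of blocks of length $6$. Block $k$ begins with $t_k$, and the next block begins with $t_{k+1}$.

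\textbf{Step 1 (synchronization).} I would show that block boundaries can be recognized locally.
\begin{itemize}
\item The factors of length $2$ inside a block $iabcde$ are $i\pi_3(i)$, $\pi_3(i)\pi_3^2(i)$, $b c$, $\pi_3^2(e)\pi_3(e)$ and $\pi_3(e)e$. In particular, the prefix of each block increases under $\pi_3$ and the suffix decreases under $\pi_3$.
\item The boundary factor $e\,j$ is governed by $R_{ij}j$ being square-free.
\end{itemize}
By inspecting the six blocks, I would find a short factor, of length at most about $5$, that occurs in $W$ only at positions congruent to a fixed residue modulo $6$. A natural candidate is the descending-then-ascending factor around a boundary, $\pi_3(e)\,e\,j\,\pi_3(j)$. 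Concretely, I would enumerate $\mathcal{R}(v)$ for all square-free $v$ of length $4$ and confirm that every factor of some fixed length $L$ determines its own offset modulo $6$.

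\textbf{Step 2 (long squares).} Suppose $W$ contains a square $uu$ with $|u|\ge L$. By Step~1, the two halves have the same offset modulo $6$, so $|u|=6m$ with $m\ge1$. After shifting to a block boundary, I would then need to deal with the partial blocks at both ends of $uu$. Equal blocks starting at a boundary position encode equal pairs $t_kt_{k+1}$, because $R_{ij}$ determines both $i$ and $j$: $i$ is its first letter, and $j$ is determined by $e$ through the rules above. So, after trimming, $uu$ contains a factor of the form $R_{t_k t_{k+1}}\cdots R_{t_{k+m-1}t_{k+m}}$ repeated twice. This forces
\[
t_k\cdots t_{k+m-1}=t_{k+m}\cdots t_{k+2m-1}.
\]

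The boundary trimming is the delicate part. If $uu$ starts in the middle of a block, its first (incomplete) block still has to match a full-length counterpart in the second half. The suffix $cde$ of a block determines $e$, and hence the set of possible $j$ values. Using this, I would show that a square in $W$ can be pushed to one that starts at a block boundary, possibly at the cost of one letter of $t$. This yields a square in $t$, which is a contradiction. It is the same argument as the standard proof that a synchronizing uniform morphism is square-free, adapted to the fact that the blocks here are indexed by pairs of letters.

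\textbf{Step 3 (short squares).} For $|u|<L$, or for $m$ small, the square fits in $\mathcal{R}(v)$ for a square-free $v$ of bounded length, roughly $\lceil 2L/6\rceil+2$. These cases I would finish by exhaustive computer enumeration, in the same way the paper verifies its other finite claims. I expect the main obstacle to be Step~1 together with the boundary trimming in Step~2. If the clean synchronization fails, I would instead verify by computer that $\mathcal{R}(v)$ is square-free for every square-free $v$ of length up to some $K$, and then use a Crochemore-type reduction (Theorem~\ref{thm:crochemore}), reproved for $2$-block substitutions, to justify that this bounded check suffices.
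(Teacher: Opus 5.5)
Your plan is correct and follows the same skeleton as the paper's proof. Periods $\Delta\ge 10$ with $\Delta\not\equiv 0\pmod 6$ are excluded by synchronization: the paper shows by hand that a block-initial $012$, together with four letters of context on one side, cannot reappear at offset $1$ or $5$, which is in effect your Step~1 with $L=10$; your suggested length-$4$ candidate would not suffice on its own, since $1020$ occurs at offset $3$ in $R_{10}R_{01}$ and at offset $4$ in $R_{12}R_{20}$. Periods $\Delta\le 9$ are then a finite check.

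The boundary trimming you flag as delicate is unnecessary. Every block $R_{ij}$ begins with $i$, so when $|u|=6m$ the letters of $uu$ at positions $\equiv 0\pmod 6$ directly form a square of length $2m$ in $t$ (the subsequence modulo $6$ of $\mathcal{R}(t)$ is a prefix of $t$). This is exactly how the paper disposes of the case $\Delta\equiv 0\pmod 6$.
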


\begin{proof}
    Assume for the sake of contradiction that $\mathcal{R}$ does not preserve square-freeness. So let $t$ be a finite square-free word such that $\mathcal{R}(t)$ contains a square $uu$ of period $\Delta$ for some $\Delta \geq 1$. A simple inspection of $\mathcal{R}$ shows that $t$ must be of length at least $2$ so there exists a non-empty ternary square-free word $t' \in \Sigma_3^+$ and $x \in \Sigma_3$ such that $t = t'x$.
    Assume that $\Delta \ge 10$. Hence, the first occurrence of $u$ contains at some position $d\in\{0,1,2,3,4,5\}$ the beginning of a block $R_{ij}$ for some $i$, $j \in \{0, 1, 2\}$ which is a factor $abc$ with $b = \pi_3(a)$ and $c = \pi_3(b)$.

    Since $\pi_3\circ\mathcal{R} = \mathcal{R}\circ\pi_3$, we can assume without loss of generality that $abc = 012$. As already noted, $\subsequence{\mathcal{R}(t)}{6} = t'$, so since $t$ is square-free, then so is $\subsequence{\mathcal{R}(t)}{6}$. If $\Delta \equiv 0 \pmod6$, then for all $i \in \{0, 1, 2, 3, 4, 5\}$ the subsequence congruent to $i$ modulo $6$ of $uu$ is a square and for one of these $i$ this sequence is a factor of $\subsequence{\mathcal{R}(t)}{6}$ which is absurd since $\subsequence{\mathcal{R}(t)}{6}$ is square-free.
    
    Therefore, we can assume that $\Delta \equiv \alpha\pmod6$ with $0 < \alpha < 6$. 
    Observe that the factor $012$ only appears at position $0$ of $R_{01}$, at position $0$ of $R_{02}$, at position $1$ of $R_{21}$, at position $5$ of $R_{21}R_{10}$, or at position $5$ of $R_{21}R_{12}$ (remember that by definition of $\mathcal{R}$, $R_{12}R_{10}$ or $R_{12}R_{12}$ cannot appear in the image of a word by $\mathcal{R}$). By hypothesis, the factor $012$ at position $d$ of the first period is created by either the prefix of size $3$ of $R_{01}$ or of $R_{02}$. Since $0 < \alpha < 6$, the factor $012$ at position $d$ of the second period must be created by either $R_{21}$, $R_{21}R_{10}$, or $R_{21}R_{12}$ . 
    \begin{itemize}
        \item If $d \geq 4$, then the factor $012$ at position $d$ of the first period must be preceded by $R_{10}$ or $R_{20}$, so $u$ contains either the factor $102012$ or the factor $1021012$ at position $d - 4$. If the factor $012$ at position $d$ of the second period is created by $R_{21}$, then similarly, it must be preceded by $R_{02}2$ or $R_{12}2$, so $u$ must contain either the factor $0212012$ or the factor $2102012$ at position $d - 4$ which is a contradiction. Otherwise, if the factor $012$ at position $d$ of the second period is created by $R_{21}R_{10}$ or $R_{21}R_{12}$, then $u$ must contain either the factor $0121012$ or the factor $2021012$ at position $d - 4$ which is a contradiction.
        \item If $d < 4$, then the factor $012$ at position $d$ of the first period must be followed by either $102$ or $021$ depending on whether $012$ is created by $R_{01}$ or $R_{02}$, so since $\Delta \geq 10$ and $d < 4$, $u$ must contain either the factor $0121021$ or the factor $0120212$ at position $d$. If the factor $012$ at position $d$ of the second period is created by $R_{21}$, then it is followed by either $10R_{10}$ or $10R_{12}$. In both cases, $u$ must contain the factor $0121012$ at position $d$ which is a contradiction. Otherwise, if the factor $012$ at position $d$ of the second period is created by $R_{21}R_{10}$ or $R_{21}R_{12}$, then $u$ must contain either the factor $0120102$ or the factor $0120210$ at position $d$ which is a contradiction.
    \end{itemize}
    Finally, if $\Delta \leq 9$, the square $uu$ in $\mathcal{R}(t)$ must start at position $d \leq 5$ in the image of a square-free word of length $5$ under $\mathcal{R}$. To verify that no such word exists\footnote{It can also easily be done with a simple computer assisted verification.}, it is sufficient to look at the image of a square-free word of length $4$ starting with $01$ or $02$ under $\mathcal{R}$ and observe, for each of the following words, for each period $\Delta \in \{1, 2, 3, 4, 5, 6, 7, 8, 9\}$ and for each position $d \in \{0, 1, 2, 3, 4, 5\}$, that the factor of length $\min(\Delta, 5)$ starting at position $d$ is always different from the factor of the same length starting at position $d + \Delta$:
    \begin{align*}
        &\textbf{0}12102\textbf{1}20102\textbf{0}12021\textbf{2}, \textbf{0}12102\textbf{1}20210\textbf{2}01021\textbf{0},
    \textbf{0}12102\textbf{1}20210\textbf{2}01210\textbf{1},\\
        &\textbf{0}12021\textbf{2}01021\textbf{0}12102\textbf{1},
    \textbf{0}12021\textbf{2}01210\textbf{1}20102\textbf{0},
    \textbf{0}12021\textbf{2}01210\textbf{1}20210\textbf{2}. \qedhere
    \end{align*}
\end{proof}

In order to construct words that are square-free, square-free modulo $6$, and square-free modulo $q$, we would like to shift some patterns to fulfill constraints as previously. However, contractions on a word $t$ shift patterns in $\mathcal{R}(t)$ by distances that are multiple of $6$. Therefore, we need some recurrence result on the subsequences that are congruent to $\alpha$ modulo $6$ for every $\alpha \in \{0, 1, 2, 3, 4, 5\}$.

\begin{lemma}\label{lemma:factors_of_sf_words_of_length_8}
    For every ternary square-free word $w$ of length $8$, $w$ has at least $5$ different factors of length $2$.
\end{lemma}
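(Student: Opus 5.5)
The statement is finite, so the plan is a structural case analysis, with exhaustive enumeration as a fallback. A square-free ternary word of length $8$ has $7$ factors of length $2$, each of the form $xy$ with $x \neq y$. There are only $6$ such factors, so at least $5$ distinct ones must appear unless many repetitions occur. I will argue by contradiction: suppose $w$ has at most $4$ distinct factors of length $2$.

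First I will look at the \emph{missing} factors. Deleting any $2$ of the $6$ non-square pairs leaves a directed graph $G$ on $\{0,1,2\}$ with $4$ arcs. The word $w$ must then be a walk of length $7$ in $G$ that is square-free. Because letters play symmetric roles, and the mirror image preserves both square-freeness and the number of distinct factors, I only need to check a few orbits of removed pairs. Up to symmetry there are three kinds of removed pair:
\begin{itemize}
\item a $2$-cycle $\{xy, yx\}$;
\item a path $\{xy, yz\}$, or the variants with a common head or a common tail;
\item the pattern $\{xy, zy\}$ or $\{xy, xz\}$, which are the same as the previous kind up to reversal.
\end{itemize}

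Then I will handle each orbit. If $\{01,10\}$ is removed, the remaining arcs are $02,20,12,21$. Every walk then alternates between $2$ and a letter of $\{0,1\}$, so $w = 2a2b2c\ldots$ or $a2b2\ldots$. Among any three consecutive choices from $\{0,1\}$ two must be adjacent-equal or form $a\,b\,a$. This gives a square $a2a2$ or $2a2a$, or, with $a2b2a2b$, a square of period $4$. Length $8$ forces this: the letters of $\{0,1\}$ at alternate positions form a binary sequence of length at least $4$, and every binary word of length $4$ contains a square ($aa$ or $abab$). Because of the interleaving with $2$, this yields a square in $w$. If instead a path such as $\{01,12\}$ is removed, the remaining arcs are $02,10,20,21$. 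From $0$ the only move is to $2$, and from $1$ the only move is to $0$. The walks are therefore forced up to choices at $2$, and I will check directly that every walk of length $8$ contains $2020$, $2121$, or a longer forced repetition such as $021021$. The case of two arcs sharing a head, $\{01,21\}$, leaves $1$ with in-degree only from nothing except its own arcs, so $1$ never appears after position $0$. Then $w$ is essentially binary of length at least $7$, which contains a square.

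The main obstacle is keeping this case analysis honest without missing orbits. The safest route is the one the paper uses elsewhere: enumerate all square-free ternary words of length $8$ starting with $01$ (there are only a few dozen) and count their distinct length-$2$ factors directly. I would present the orbit argument above as the explanation and point to the exhaustive check, which is trivial by computer or even by hand, for completeness.
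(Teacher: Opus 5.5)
Your proof is correct, but it takes a different route from the paper. The paper argues by plain enumeration. It draws the tree of square-free words starting with $01$, stopping a branch once five distinct factors of length $2$ have appeared. It then observes that every branch reaches five factors except $0102010$, which cannot be extended to a square-free word of length $8$.

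You use pigeonhole instead. At most four factors means two of the six arcs $xy$ with $x\neq y$ are unused, so $w$ is an $8$-letter walk in a $4$-arc digraph. Up to renaming letters and mirror image, the $15$ pairs of missing arcs form three orbits. Your bullet list garbles this: paths form an orbit of their own, and it is common head and common tail that are exchanged by reversal. Still, your representatives $\{01,10\}$, $\{01,12\}$, $\{01,21\}$ cover all $3+6+3+3$ pairs, and the two-cycle and common-head cases are fully argued.

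The path case is only announced, and the square $2121$ you list cannot even occur there, since $12$ is removed. You should write this case out, for example as follows. Every $0$ is followed by $2$, every $1$ by $0$, and every $2$ other than the first letter is preceded by $0$. So after each $2$ the walk reads a block $20$ or $210$. Equal consecutive blocks give $2020$ or $210210$, and a block $20$ with letters on both sides gives $0202$. Checking the few possible ends against $021021$ and $102102$ then shows that the longest square-free walk is $2021020$, of length $7$.

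What your route buys is an explanation rather than a search. It identifies the extremal $7$-letter words with only four factors, namely $0102010$ and $2021020$ up to symmetry. After renaming, the latter is the prefix $0102101$ of the paper's longest branch $01021012$. This shows that length $8$ is sharp, and it does not depend on a correctly drawn figure. The paper's tree, in exchange, is shorter to state and trivially mechanized.
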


\begin{proof}
    We can explore all the possible values for $w$, assuming $w_0w_1 = 01$. This leads to the following tree of possibilities. The tree is oriented with all arcs pointing away from the root.
    \begin{center}
    \includegraphics[scale = 0.65]{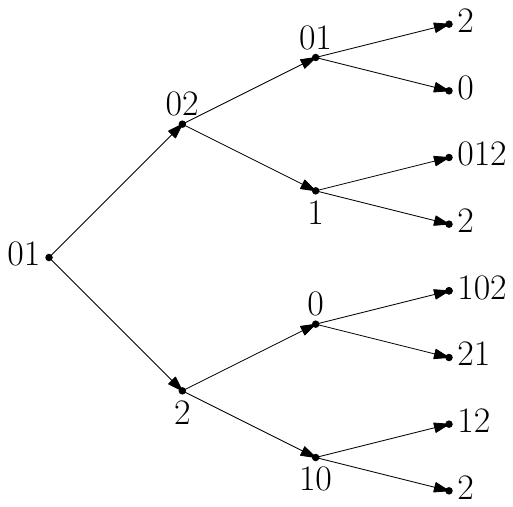}
    \end{center}
    A root-to-leaf oriented path labeled with the letters $a$, $b$, $c$, $d$ represents the word $abcd$. Observe that every root-to-leaf oriented path contains at least $5$ different factors of length $2$ except for the path $0102010$ but this word cannot be a factor of a square-free word of length $8$. Moreover, every square-free word of length $8$ starting with $01$ must have a prefix that is a root-to-leaf oriented path in the tree of possibilities since the tree explores all possibilities to construct a square-free word starting with $01$. Finally, the longest root-to-leaf oriented path is $01021012$ of length $8$.
\end{proof}

\begin{corollary}\label{cor:u_v_size_2_6_recurrent}
    For every pair of ternary square-free words $u$ and $v$ of length $2$ with $u \neq v$, the pattern $\{u, v\}$ is $(6, h_{26})$-recurrent.
\end{corollary}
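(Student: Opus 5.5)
We deduce the corollary directly from Lemma \ref{lemma:factors_of_sf_words_of_length_8} by a counting argument. Recall what has to be shown. Fix an infinite ternary square-free word $t$ and an index $i\ge 0$. We need some $j$ with $i\le j\le i+6$ such that the factor of length $2$ of $h_{26}(t)$ starting at $j$ is $u$ or $v$. Equivalently, the window $h_{26}(t)_i\dots h_{26}(t)_{i+7}$ of length $8$ must contain $u$ or $v$ as a factor.

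First, by Lemma \ref{lemma:h_is_squarefree}, applied with the constant guiding sequence equal to $26$, the word $h_{26}(t)$ is square-free. Hence every factor $x$ of length $8$ of $h_{26}(t)$ is a ternary square-free word of length $8$.

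Next, count the possible factors of length $2$. A ternary square-free word of length $2$ is one of the $6$ words $ab$ with $a\neq b$. The factors of length $2$ of $x$ are all square-free, so they lie in this set of size $6$.

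By Lemma \ref{lemma:factors_of_sf_words_of_length_8}, $x$ has at least $5$ distinct factors of length $2$. So at most one square-free word of length $2$ is missing from $x$. Since $u\neq v$ are both square-free of length $2$, at least one of them occurs in $x$. It occurs at some offset $0\le j-i\le 6$, because an occurrence of a length-$2$ factor inside a length-$8$ window starts at offset at most $6$.

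This gives the required $j\le i+6$, so $\{u,v\}$ is $(6,h_{26})$-recurrent. There is no real obstacle here. The only points to check are that $h_{26}(t)$ is square-free (so Lemma \ref{lemma:factors_of_sf_words_of_length_8} applies to its windows) and that the window length $8$ matches the recurrence bound $\Delta=6$.
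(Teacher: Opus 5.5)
Your proposal is correct and is exactly the argument the paper intends: the corollary is stated with no separate proof, as an immediate consequence of Lemma~\ref{lemma:factors_of_sf_words_of_length_8}. The reasoning is that $h_{26}(t)$ is square-free by Lemma~\ref{lemma:h_is_squarefree}, so every length-$8$ window contains at least $5$ of the $6$ square-free words of length $2$, and hence contains one of $u\neq v$ at an offset of at most $6$.
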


\begin{lemma}\label{lemma:shift_in_6_completions}
Let $t$ be an infinite ternary square-free word, and let $\Gamma = (\gamma_i)_{i \ge 0}$ be a guiding sequence. For every integer $M \ge 0$, $M' \ge M + 341$ and for every letter $a \in \Sigma$, 
there exists a guiding sequence $\Gamma'$ such that
    \begin{itemize}
        \item for every integer $i \le M$, $\mathcal{R}(h_{\Gamma'}(t))_i = \mathcal{R}(h_\Gamma(t))_i$, and
        \item $\mathcal{R}(h_{\Gamma'}(t))_{M'} = a$. 
    \end{itemize}
\end{lemma}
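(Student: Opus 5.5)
The plan is to reduce the statement to Lemma~\ref{lemma:contraction_for_Delta_recurrent_patterns}, applied to the word $x = h_\Gamma(t)$ rather than to $\mathcal{R}(x)$. The reduction rests on two facts: a letter of $\mathcal{R}(x)$ is determined by one length-$2$ factor of $x$, and the relevant set of such factors is $(6,h_{26})$-recurrent by Corollary~\ref{cor:u_v_size_2_6_recurrent}.

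\textbf{Step 1: locating the constraint in $x$.} Write $M' = 6j + \alpha$ with $0 \le \alpha \le 5$. Since $\mathcal{R}(x) = \prod_i R_{x_ix_{i+1}}$ and every block has length $6$, we have $\mathcal{R}(x)_{M'} = (R_{x_jx_{j+1}})_\alpha$. So the requirement $\mathcal{R}(x)_{M'} = a$ is exactly that $x$ contains the pattern $S_{\alpha,a} = \{uv : (R_{uv})_\alpha = a\}$ at position $j$. I will then check from the table of the $R_{ij}$ that each $S_{\alpha,a}$ contains at least two distinct square-free words of length $2$:
\begin{itemize}
\item for $\alpha \in \{0,1,2\}$, the letter depends only on $u$ (it is $u$, $\pi_3(u)$ or $\pi_3^2(u)$), so both choices of $v$ work;
\item for $\alpha \in \{3,4,5\}$, the suffixes $R_{uv}[3..5]$ are $102,021,102,210,021,210$ for $uv = 01,02,10,12,20,21$, so each of the three suffixes appears twice and each letter at each offset is realized by two distinct pairs.
\end{itemize}
Restricting to two distinct elements $u \neq v$ of $S_{\alpha,a}$, Corollary~\ref{cor:u_v_size_2_6_recurrent} shows that $\{u,v\}$ is $(6,h_{26})$-recurrent, hence so is $S_{\alpha,a}$.

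\textbf{Step 2: the prefix to preserve.} The prefix $\mathcal{R}(x)_0\dots\mathcal{R}(x)_M$ depends only on $x_0\dots x_N$, where:
\begin{itemize}
\item $N = \lfloor M/6 \rfloor$ if $M \equiv 0 \pmod 6$, since then position $M$ is the first letter $x_{\lfloor M/6\rfloor}$ of its block;
\item $N = \lfloor M/6\rfloor + 1$ otherwise, since the last block touched may need the next letter.
\end{itemize}
We apply Lemma~\ref{lemma:contraction_for_Delta_recurrent_patterns} with this $N$, with $N' = j$, $\Delta = 6$ and $\mathcal{P} = S_{\alpha,a}$. It produces $\Gamma'$ that keeps $x_0\dots x_N$ unchanged and places $S_{\alpha,a}$ at position $j$. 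This yields both required properties of $\mathcal{R}(h_{\Gamma'}(t))$. Moreover $h_{\Gamma'}(t)$ is still square-free by Lemma~\ref{lemma:h_is_squarefree}, so $\mathcal{R}$ applies by Proposition~\ref{prop:6-compl-sqfree}.

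\textbf{Step 3: the arithmetic.} The lemma needs $j \ge N + 4 + 26\lceil 6/3\rceil = N + 56$. Since $341 = 6\cdot 56 + 5$:
\begin{itemize}
\item if $M \equiv 0 \pmod 6$, then $j \ge \lfloor (M+341)/6\rfloor = \lfloor M/6\rfloor + 56 = N + 56$;
\item if $M \not\equiv 0 \pmod 6$, the extra $5$ together with $M \bmod 6 \ge 1$ carries over a multiple of $6$, so $j \ge \lfloor M/6\rfloor + 57 = N + 56$.
\end{itemize}
The main obstacle I expect is this off-by-one bookkeeping between positions in $\mathcal{R}(x)$ and in $x$, which is exactly what makes $341$ tight. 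If it fails by one, I would refine $N$: when $M \bmod 6 \in \{1,2\}$ the last block's letters depend only on $x_{\lfloor M/6\rfloor}$, so the smaller $N$ suffices in those cases too. The recurrence check in Step 1 is routine inspection of the table.
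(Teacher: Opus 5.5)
Your proposal is correct and is essentially the paper's proof. Your case-defined $N$ is exactly $\lceil M/6\rceil$, and the paper likewise encodes $\mathcal{R}(x)_{M'}=a$ as the two-element pattern $\{u,v\}$ at position $\lfloor M'/6\rfloor$, then applies Lemma~\ref{lemma:contraction_for_Delta_recurrent_patterns} with $\Delta=6$ and checks $\lfloor M'/6\rfloor\ge\lceil M/6\rceil+56$ as you do; your arithmetic already closes, so the fallback refinement of $N$ is not needed.
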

\begin{proof}
    By inspection of the $R_{ij}$, for every $\alpha \in [0, 5]$, there exist exactly two words $u$, $v$ with 
    \[u,v \in \{01, 02, 10, 12, 20, 21\}\]
    such that $(R_u)_\alpha = a$ and $(R_v)_\alpha = a$. Therefore for every infinite ternary square-free word $w$, $\mathcal{R}(w)_{6M'' + \alpha} = a$ if and only if $w$ contains the pattern $\mathcal{P} = \{u, v\}$ at position $M''$.
    We can thus reformulate our Lemma to ask for a guiding sequence $\Gamma'$ such that for every integer $i \le M$, $\mathcal{R}(h_\Gamma(t))_i = \mathcal{R}(h_{\Gamma'}(t))_i$ and for the pattern $\mathcal{P}$ to occur at position $M''$ in $h_{\Gamma'}(t)$ where 
    $M' = 6M'' + \alpha$ with $\alpha \in [0, 5]$. By Corollary  \ref{cor:u_v_size_2_6_recurrent}, $\mathcal{P}$ is a $(6, h_{26})$-recurrent pattern therefore we can apply Lemma \ref{lemma:contraction_for_Delta_recurrent_patterns} on $h_\Gamma(t)$ with $N = \lceil M/6\rceil$, $N' = M''$, $\mathcal{P} = \{u, v\}$ and $\Delta = 6$ to construct $\Gamma'$ and we indeed have that $N' \ge N + 4 + 26\lceil\Delta/3\rceil$ since by hypothesis, $M' \ge M + 341$, so $6M''+\alpha \ge  M + 341$ and therefore $M'' + \alpha/6 \ge (M + 341)/6 \ge \lceil M/6\rceil + 56$ so since $M$ and $M''$ are integers, $M'' \ge \lceil M/6\rceil + 56$
    and $4 + 26\lceil\Delta/3\rceil = 56$ as desired.
\end{proof}

\begin{proposition}\label{prop:p=6}
    For every integer $q \ge 341$, there exists an infinite ternary square-free word that is square-free modulo $6$ and square-free modulo $q$.
\end{proposition}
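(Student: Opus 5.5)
The plan is to build the word as $w=\mathcal{R}(h_\Gamma(t))$ for a fixed infinite ternary square-free word $t$ and a guiding sequence $\Gamma$ that we repeatedly modify. By Lemma \ref{lemma:h_is_squarefree}, $h_\Gamma(t)$ is square-free for every guiding sequence $\Gamma$. Proposition \ref{prop:6-compl-sqfree} then shows that $w$ is square-free. Moreover $\subsequence{w}{6}=h_\Gamma(t)$, so $w$ is also square-free modulo $6$. Hence all the work goes into making $\subsequence{w}{q}$ square-free. To this end, fix an arbitrary infinite ternary square-free word $s$ and aim for $w_{iq}=s_i$ for every $i\ge 0$; then $\subsequence{w}{q}=s$ is square-free. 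Note that $q$ is not assumed coprime to $6$ here. This causes no problem, because the constraints sit at the positions $iq$ of $w$, and their residues modulo $6$ are handled uniformly by Lemma \ref{lemma:shift_in_6_completions}.

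The construction proceeds by induction on the number of satisfied constraints. Start with $\Gamma$ constant equal to $26$, and pick $t$ (or permute letters using circularity) so that $w_0=s_0$. Suppose $\Gamma$ satisfies $w_{jq}=s_j$ for all $j\le n$. Apply Lemma \ref{lemma:shift_in_6_completions} with $M=nq$, $M'=(n+1)q$ and $a=s_{n+1}$. This is allowed since $M'-M=q\ge 341$. The lemma returns a guiding sequence $\Gamma'$ such that $\mathcal{R}(h_{\Gamma'}(t))$ agrees with $\mathcal{R}(h_\Gamma(t))$ on all positions $\le nq$, so the first $n+1$ constraints still hold, and such that position $(n+1)q$ now carries $s_{n+1}$. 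Because the prefix up to position $nq$ is frozen from step $n$ onward, the successive words converge to an infinite word $w$. Every finite prefix of $w$ is a prefix of some $\mathcal{R}(h_{\Gamma^{(n)}}(t))$, which is square-free and square-free modulo $6$. Therefore $w$ is square-free, square-free modulo $6$, and satisfies $\subsequence{w}{q}=s$.

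The main obstacle is the limit step. We must check that the limit object is actually of the form $\mathcal{R}(h_{\Gamma^\infty}(t))$, or at least that square-freeness passes to the limit. The simplest route is to argue via prefixes as above, since square-freeness of an infinite word and of its subsequences are properties of finite prefixes. To make the frozen prefixes increase in a clean way, I would also check that the guiding-sequence entries modified in each step lie beyond those fixed earlier. In that case $\Gamma$ itself stabilizes coordinatewise, and $w=\mathcal{R}(h_{\Gamma^\infty}(t))$ directly. The remaining content is just the index bookkeeping already packaged in Lemma \ref{lemma:shift_in_6_completions}.
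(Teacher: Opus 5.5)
Your proof is correct and follows essentially the same route as the paper: take $w=\mathcal{R}(h_\Gamma(t))$ with $\Gamma$ constant equal to $26$ and $t_0=s_0$, then repeatedly apply Lemma~\ref{lemma:shift_in_6_completions} with $M=nq$, $M'=(n+1)q$ to force $w_{(n+1)q}=s_{n+1}$ while preserving the prefix up to $nq$. Your explicit handling of the limit step, via frozen prefixes or the coordinatewise stabilization of $\Gamma$, fills in a point the paper leaves implicit.
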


\begin{proof}
Let $s$ and $t$ be any infinite ternary square-free words such that $s_0 = t_0$ and let $\Gamma = (\gamma_i)_{i \ge 0}$ be the guiding sequence of constant value $26$. We will modify $\Gamma$ so that $\subsequence{\mathcal{R}(h_\Gamma(t))}{q} = s$. Notice that since $t$ is square-free, by Proposition \ref{prop:6-compl-sqfree}, $\mathcal{R}(h_\Gamma(t))$ is square-free and square-free modulo $6$. By definition of $s$ and $t$, there exists $n \ge 0$ such that $n$ is a multiple of $q$ and the prefix of size $n/q$ of $\subsequence{\mathcal{R}(h_\Gamma(t))}{q}$ is the prefix of size $n/q$ of $s$. To increase the value of $n$, we need to modify $\Gamma$ so that the prefix of length $n$ of $\mathcal{R}(h_\Gamma(t))$ remains the same and $\mathcal{R}(h_\Gamma(t))_{n + q} = s_{n/q + 1}$. Since by hypothesis $q \ge 341$, we can apply Lemma \ref{lemma:shift_in_6_completions} with $M = n$ and $M' = n + q$.
\end{proof}

We can then conclude this Section by combining Corollary \ref{coro:construction_from_cyclic_morphisms}, Proposition \ref{prop:sparse_p_pairs} and Proposition \ref{prop:p=6} to get the following Theorem.

\begin{theorem}\label{thm:psmall_qlarge}
    For every integer $p \ge 3$, there exists $l$ such that for every $q \geq l$, there exists an infinite ternary square-free word that is square-free modulo $p$ and square-free modulo $q$.
\end{theorem}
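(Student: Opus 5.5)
The plan is a case split on $p$ that covers every $p\ge 3$, invoking in each case the earlier result that handles it.

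The integers $p \ge 3$ fall into exactly three families:
\begin{itemize}
\item $p \in \{13,17,18,19\}\cup\mathbb{N}_{\ge 23}$;
\item $p \in \{3,4,5,7,8,9,10,11,12,14,15,16,20,21,22\}$;
\item $p = 6$.
\end{itemize}
I will check this by listing $3,\dots,22$: the values $13,17,18,19$ lie in the first family, $6$ is the third, and all the others appear in the second. Since the statement asks only for $q \ge l$ with no coprimality assumption, each earlier result can be applied as stated.

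For $p$ in the first family, I take $l = 19p$ and apply Corollary \ref{coro:construction_from_cyclic_morphisms}. This corollary rests on Proposition \ref{prop:circular_morphism_sf_sfmodp} with $k=1$, $\alpha=0$ and Currie's circular morphisms from Proposition \ref{prop:currie2021infinite}. For any $q \ge 19p$ it gives an infinite ternary square-free word $w$ with $\subsequence{w}{p}$ and $\subsequence{w}{q}$ square-free.

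For $p$ in the second family, Proposition \ref{prop:sparse_p_pairs} directly provides a threshold $l$. Its proof is Proposition \ref{prop:circular_morphism_sf_sfmodp} applied to the morphisms of Table \ref{fig:morphisms_cone_remaining_cases}, which gives $l = 19kp$ with the tabulated $k$.

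For $p=6$, I take $l = 341$ and apply Proposition \ref{prop:p=6}.

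In every case the word produced is infinite, ternary and square-free, and it is square-free modulo $p$ and modulo $q$ for all $q\ge l$, which is the statement. The only real obstacle is making sure the three families leave no gap; that check is the finite enumeration of $3\le p\le 22$ described above. All the substantive work has already been done in the cited results.
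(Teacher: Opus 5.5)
Your proposal is correct and is exactly the paper's argument: the theorem is obtained by combining Corollary~\ref{coro:construction_from_cyclic_morphisms}, Proposition~\ref{prop:sparse_p_pairs} and Proposition~\ref{prop:p=6}, whose ranges of $p$ cover all of $\mathbb{N}_{\ge 3}$ as your enumeration confirms. One minor remark: Table~\ref{fig:morphisms_cone_remaining_cases} actually lists the slightly larger thresholds $20kp$ rather than the $19kp$ you read off Proposition~\ref{prop:circular_morphism_sf_sfmodp}, but either choice of $l$ suffices for the statement.
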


Notice that for every $p \geq 3$, Corollary \ref{coro:construction_from_cyclic_morphisms}, Proposition \ref{prop:sparse_p_pairs} or Proposition \ref{prop:p=6} provide an explicit value for $l$.

\section{Some of the remaining cases}\label{sec:remaining_cases}

Theorem \ref{thm:pq_large} and Theorem \ref{thm:psmall_qlarge} leave only finitely many pairs of relatively prime integers $(p, q)$ with $p$, $q \ge 3$ for which we do not know whether there exist infinite ternary square-free words that are square-free modulo $p$ and $q$. Using a computer, we verified that the number of pairs of relatively prime integers that remain unresolved is exactly $1238408$. To start addressing these remaining cases, we solve the problem for all pairs of integers $p$, $q \leq 20$ other than the pair $(5, 8)$ (and its symmetric pair $(8, 5)$).

\begin{proposition}
    Let $p$, $q \le 20$ be such that $p \le q$ and $(p, q) \neq (5,8)$ and let $S$ be the set 
    \begin{align*}
        &\{(3, 4), (3, 5), (3, 7), (3, 8), (3,9), (3,10), (4,5), (4,7), (4,9), (4,10), (4,14), (6,7)\} \\
        &\cup \{(2, i), i \ge 1\} \cup \{(i, 2i) \mid i \ge 1\} \cup \{(2i, 3i) \mid i \ge 1\}.
    \end{align*}
    There exists an infinite ternary square-free word that is square-free modulo $p$ and square-free modulo $q$ if and only if $(p, q) \not\in S$.
\end{proposition}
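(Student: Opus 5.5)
The proof has two directions: a negative one, showing that no suitable word exists for $(p,q)\in S$, and a positive one, exhibiting a word for every other pair with $p\le q\le 20$.

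\emph{Negative direction.} The infinite families are handled by structural arguments.

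For $(2,i)$, Harju's result already rules out square-freeness modulo $2$.

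For $(i,2i)$, note that $\subsequence{w}{2i}=\subsequence{\subsequence{w}{i}}{2}$. If $u=\subsequence{w}{i}$ were square-free, then $u$ would be a ternary square-free word whose subsequence modulo $2$ is also square-free, which Harju showed impossible.

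For $(2i,3i)$, set $u=\subsequence{w}{i}$. The family then reduces to $(2,3)$ applied to $u$ (now without requiring $u$ itself to be square-free), and $(2,3)$ is already covered by the $(2,\cdot)$ family applied to $u$. So I would check that no infinite ternary word has both its mod-$2$ and mod-$3$ subsequences square-free. Concretely, the letters at positions $0,2,4,\dots$ and $0,3,6,\dots$ share positions $0,6,12,\dots$, and a backtracking search should show that every such word is finite. I expect this to be a finite computer check of bounded depth.

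For the twelve sporadic pairs $(3,4),\dots,(6,7)$, I would run a backtracking search. The search extends a word letter by letter and prunes as soon as $w$, $\subsequence{w}{p}$, or $\subsequence{w}{q}$ ends with a square, and it terminates with a finite maximal length. This certifies nonexistence, in the same computer-assisted style as the earlier lemmas \cite{code}.

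\emph{Positive direction.} For every remaining pair with $p\le q\le 20$, I would first check whether it is already covered by Corollary~\ref{coro:construction_from_cyclic_morphisms}, Proposition~\ref{prop:sparse_p_pairs}, or Proposition~\ref{prop:p=6}. Almost none will be, since those results need $q\ge 19p$ or large thresholds. So, following Currie et al.'s treatment of $(3,11)$ and $(5,6)$, I would search for a uniform morphism $g$ (possibly $g=f\circ\varphi$ with a fixed-point construction) with images of length $L$ a multiple of $\lcm(p,q)$ or chosen suitably, such that $g(t)$, $\subsequence{g(t)}{p}$ and $\subsequence{g(t)}{q}$ are square-free for every square-free $t$.

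The first step is to run backtracking to find long words valid for $(p,q)$, which suggests candidate image blocks. Next, I would impose a circular or uniform structure so that each subsequence $\subsequence{s^\alpha(g(t))}{m}$ is itself the image of $t$ under a uniform morphism $g^{\alpha,m}$. Finally, I would verify square-freeness of $g$ and of the derived morphisms $g^{\alpha,p}$ and $g^{\alpha,q}$ via Crochemore's test (Theorem~\ref{thm:crochemore}) on square-free words of length $3$.

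A subtle point is that when $L$ is not a multiple of $m$, the subsequence modulo $m$ cycles through residues. I would handle this by taking $L$ to be a multiple of $\lcm(p,q)$, or by treating the subsequence as the image under a morphism of a longer block structure, namely blocks of $m$ consecutive images.

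\emph{Main obstacle.} The hard step is the positive direction for each of the roughly one hundred pairs: finding morphisms with manageable image lengths. If uniform morphisms of length $\lcm(p,q)$ are too long to find, I would instead first try to verify that the search tree for $(p,q)$ admits a cycle in a finite automaton of constrained prefixes. Failing that, I would try a transducer in the spirit of $\mathcal{R}$ from Section~\ref{sec:p_equal_6}.
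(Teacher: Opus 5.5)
Your proposal is correct and follows essentially the paper's route. Positive pairs get explicit computer-found morphisms, checked with Crochemore's criterion on $g$ and on the derived subsequence morphisms. The sporadic negative pairs get a terminating backtracking search, and the families $(i,2i)$ and $(2i,3i)$ are handled by the subsampling reduction $u=\subsequence{w}{i}$. One correction: your aside that the $(2,3)$ case for $u$ is ``already covered by the $(2,\cdot)$ family'' is wrong, because $u=\subsequence{w}{i}$ need not be square-free and Harju's mod-$2$ result does not apply to it. The separate fact you then propose to check is therefore genuinely needed: no infinite ternary word has both its mod-$2$ and mod-$3$ subsequences square-free. This is exactly the fact the paper invokes for the $(2i,3i)$ family.
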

For the positive pairs, we found explicit morphisms that construct, from any ternary square-free word, a longer ternary square-free word that is square-free modulo $p$ and $q$. These morphisms can be found in \cite[table\_\ref{table:pq_leq_20}\_morphisms.txt]{code}, and the algorithm that verifies that these morphisms are square-free, square-free modulo $p$, and square-free modulo $q$ can be found in \cite[table\_\ref{table:pq_leq_20}\_blue.cpp]{code}.

For a ternary square-free word $w$ of length at least $3$, the \emph{Pansiot code} of $w$ is a binary word $c$ with $c_i = 1$ if $w_i = w_{i + 2}$ and $c_i = 0$ otherwise. To find a morphism for a given positive pair $(p,q)$, we first look for a suitable
set of Pansiot codes of length $\ell=\lcm(p,q)$ that could start at positions~$0\pmod{\ell}$ in a large ternary square-free word square-free modulo $p$ and square-free modulo $q$.
We then iterate the following steps:
\begin{itemize}
    \item We construct a large ternary square-free word $w$ square-free modulo $p$ and modulo $q$ such that the Pansiot code of each factor of length $\ell$ and at a position congruent to $0$ modulo $\ell$ of $w$ is in our set of suitable Pansiot codes.
    \item Then we forbid the Pansiot code of length $\ell$ that appears the least at a position congruent to $0$ modulo $\ell$ in $w$.
\end{itemize}
We continue until the set of surviving Pansiot codes gets reasonably small, but remains large enough to construct a
long ternary square-free word that is square-free modulo $p$ and modulo $q$.
We then try to find a morphism that uses only this set of Pansiot codes.

We proved that some pairs are negative using an exhaustive search algorithm that looks for the longest possible ternary square-free word that is also square-free modulo $p$ and $q$ in lexicographic order. If the search terminates, then no such infinite word exists. An implementation of this backtracking algorithm can be found in
\cite[table\_\ref{table:pq_leq_20}\_red.cpp]{code}.

In addition, we can eliminate all the pairs where $p$ or $q$ is equal to $2$. We can also eliminate many pairs of non-coprime integers with the following observations.
\begin{observation}\label{prop:non-square-free-words}
    For every integer $k$, $p$, $q \geq 1$, if there exists an infinite ternary square-free word $w'$ such that $\subsequence{w'}{kp}$ and $\subsequence{w'}{kq}$ are square-free, then there exists an infinite ternary word $w$ such that $\subsequence{w}{p}$ and $\subsequence{w}{q}$ are square-free.
\end{observation}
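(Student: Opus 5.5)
The plan is to take $w = \subsequence{w'}{k}$, the subsequence of $w'$ at positions divisible by $k$. Note that the conclusion only asks for $\subsequence{w}{p}$ and $\subsequence{w}{q}$ to be square-free. It does not ask for $w$ itself to be square-free, and indeed $\subsequence{w'}{k}$ need not be square-free. So the whole argument reduces to identifying the subsequences of $w$ with subsequences of $w'$.

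The key step is the identity $\subsequence{\subsequence{w'}{k}}{p} = \subsequence{w'}{kp}$. To check it, observe that the $i$-th letter of $\subsequence{w'}{k}$ is $w'_{ik}$. Taking every $p$-th letter of this sequence gives the letters $w'_{(jp)k} = w'_{j(kp)}$ for $j \ge 0$, which is exactly the $j$-th letter of $\subsequence{w'}{kp}$. Hence $\subsequence{w}{p} = \subsequence{w'}{kp}$, and the same computation with $q$ in place of $p$ gives $\subsequence{w}{q} = \subsequence{w'}{kq}$.

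By hypothesis both $\subsequence{w'}{kp}$ and $\subsequence{w'}{kq}$ are square-free. It follows that $\subsequence{w}{p}$ and $\subsequence{w}{q}$ are square-free. Since $w$ is an infinite ternary word, this is the statement.

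There is no real obstacle; the only point requiring care is the phrasing of the conclusion. The hypothesis that $w'$ is square-free is not needed at all, and $w$ is not claimed to be square-free. The observation is therefore used contrapositively. If no infinite ternary word, square-free or not, has square-free subsequences modulo $p$ and $q$, then no square-free word is square-free modulo $kp$ and $kq$. This is how pairs such as $(2i,3i)$ and $(i,2i)$ are excluded, from the corresponding negative results for $(2,3)$ and for modulus $2$ at the level of arbitrary words.
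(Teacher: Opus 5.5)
Your proof is correct and is exactly the paper's argument: set $w_i = w'_{ki}$, so that $\subsequence{w}{p} = \subsequence{w'}{kp}$ and $\subsequence{w}{q} = \subsequence{w'}{kq}$. Your remarks also match the paper: the square-freeness of $w'$ is never used, and the observation is applied contrapositively to exclude the families $(i,2i)$ and $(2i,3i)$.
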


Indeed, consider the word $w$ such that $w_i = w'_{ki}$. Then $\subsequence{w}{p} = \subsequence{w'}{kp}$ and $\subsequence{w}{q} = \subsequence{w'}{kq}$, so $w$ is square-free modulo $p$ and $q$. Since there is no infinite ternary word that is square-free and square-free modulo $2$, and no infinite ternary word whose subsequences modulo $2$ and $3$ are both square-free, it follows that for every integer $k \geq 1$, there is no infinite ternary square-free word that is square-free modulo $k$ and $2k$, nor any such word that is square-free modulo $2k$ and $3k$.

For $p$, $q \leq 20$, both the negative pairs obtained from the backtracking algorithm and the positive pairs obtained with explicit morphisms are presented in Table \ref{table:pq_leq_20}. We conjecture that for every pair of relatively prime integers $p$ and $q$ such that $p$, $q \geq 3$ and $p + q \geq 14$, there exists an infinite ternary square-free word that is square-free modulo $p$ and $q$.

\newcolumntype{C}[1]{>{\arraybackslash}m{#1}}
\definecolor{myblue}{RGB}{48,154,235}
\definecolor{myred}{RGB}{245,0,0}
\definecolor{myblack}{RGB}{0,0,0}

\begin{table}[H]
\begin{center}
\scalebox{0.8}{
\begin{tabular}{|c|C{0.25cm}|C{0.25cm}|C{0.25cm}|C{0.25cm}|C{0.25cm}|C{0.25cm}|C{0.25cm}|C{0.25cm}|C{0.25cm}|C{0.25cm}|C{0.25cm}|C{0.25cm}|C{0.25cm}|C{0.25cm}|C{0.25cm}|C{0.25cm}|C{0.25cm}|C{0.25cm}|C{0.25cm}|C{0.25cm}|C{0.25cm}|C{0.25cm}|C{0.25cm}|}
\hline
\rule{0pt}{0.5cm}20 & \cellcolor{myblue} & \cellcolor{myred} & \cellcolor{myblue} & \cellcolor{myblue} & \cellcolor{myblue} & \cellcolor{myblue} & \cellcolor{myblue} & \cellcolor{myblue} & \cellcolor{myblue} & \cellcolor{myred} 2 & \cellcolor{myblue} & \cellcolor{myblue} & \cellcolor{myblue} & \cellcolor{myblue} & \cellcolor{myblue} & \cellcolor{myblue} & \cellcolor{myblue} & \cellcolor{myblue} & \cellcolor{myblue} & \cellcolor{myblue}\\ \hline
\rule{0pt}{0.5cm}19 & \cellcolor{myblue} & \cellcolor{myred} & \cellcolor{myblue} & \cellcolor{myblue} & \cellcolor{myblue} & \cellcolor{myblue} & \cellcolor{myblue} & \cellcolor{myblue} & \cellcolor{myblue} & \cellcolor{myblue} & \cellcolor{myblue} & \cellcolor{myblue} & \cellcolor{myblue} & \cellcolor{myblue} & \cellcolor{myblue} & \cellcolor{myblue} & \cellcolor{myblue} & \cellcolor{myblue} & \cellcolor{myblue} & \cellcolor{myblue}\\ \hline
\rule{0pt}{0.5cm}18 & \cellcolor{myblue} & \cellcolor{myred} & \cellcolor{myblue} & \cellcolor{myblue} & \cellcolor{myblue} & \cellcolor{myblue} & \cellcolor{myblue} & \cellcolor{myblue} & \cellcolor{myred} 2 & \cellcolor{myblue} & \cellcolor{myblue} & \cellcolor{myred} 3 & \cellcolor{myblue} & \cellcolor{myblue} & \cellcolor{myblue} & \cellcolor{myblue} & \cellcolor{myblue} & \cellcolor{myblue} & \cellcolor{myblue} & \cellcolor{myblue}\\ \hline
\rule{0pt}{0.5cm}17 & \cellcolor{myblue} & \cellcolor{myred} & \cellcolor{myblue} & \cellcolor{myblue} & \cellcolor{myblue} & \cellcolor{myblue} & \cellcolor{myblue} & \cellcolor{myblue} & \cellcolor{myblue} & \cellcolor{myblue} & \cellcolor{myblue} & \cellcolor{myblue} & \cellcolor{myblue} & \cellcolor{myblue} & \cellcolor{myblue} & \cellcolor{myblue} & \cellcolor{myblue} & \cellcolor{myblue} & \cellcolor{myblue} & \cellcolor{myblue}\\ \hline
\rule{0pt}{0.5cm}16 & \cellcolor{myblue} & \cellcolor{myred} & \cellcolor{myblue} & \cellcolor{myblue} & \cellcolor{myblue} & \cellcolor{myblue} & \cellcolor{myblue} & \cellcolor{myred} 2 & \cellcolor{myblue} & \cellcolor{myblue} & \cellcolor{myblue} & \cellcolor{myblue} & \cellcolor{myblue} & \cellcolor{myblue} & \cellcolor{myblue} & \cellcolor{myblue} & \cellcolor{myblue} & \cellcolor{myblue} & \cellcolor{myblue} & \cellcolor{myblue} \\ \hline
\rule{0pt}{0.5cm}15 & \cellcolor{myblue} & \cellcolor{myred} & \cellcolor{myblue} & \cellcolor{myblue} & \cellcolor{myblue} & \cellcolor{myblue} & \cellcolor{myblue} & \cellcolor{myblue} & \cellcolor{myblue} & \cellcolor{myred} 3 & \cellcolor{myblue} & \cellcolor{myblue} & \cellcolor{myblue} & \cellcolor{myblue} & \cellcolor{myblue} & \cellcolor{myblue} & \cellcolor{myblue} & \cellcolor{myblue} & \cellcolor{myblue} & \cellcolor{myblue} \\ \hline
\rule{0pt}{0.5cm}14 & \cellcolor{myblue} & \cellcolor{myred} & \cellcolor{myblue} & \cellcolor{myred} & \cellcolor{myblue} & \cellcolor{myblue} & \cellcolor{myred} 2 & \cellcolor{myblue} & \cellcolor{myblue} & \cellcolor{myblue} & \cellcolor{myblue} & \cellcolor{myblue} & \cellcolor{myblue} & \cellcolor{myblue} & \cellcolor{myblue} & \cellcolor{myblue} & \cellcolor{myblue} & \cellcolor{myblue} & \cellcolor{myblue} & \cellcolor{myblue}\\ \hline
\rule{0pt}{0.5cm}13 & \cellcolor{myblue} & \cellcolor{myred} & \cellcolor{myblue} & \cellcolor{myblue} & \cellcolor{myblue} & \cellcolor{myblue} & \cellcolor{myblue} & \cellcolor{myblue} & \cellcolor{myblue} & \cellcolor{myblue} & \cellcolor{myblue} & \cellcolor{myblue} & \cellcolor{myblue} & \cellcolor{myblue} & \cellcolor{myblue} & \cellcolor{myblue} & \cellcolor{myblue} & \cellcolor{myblue} & \cellcolor{myblue} & \cellcolor{myblue}\\ \hline
\rule{0pt}{0.5cm}12 & \cellcolor{myblue} & \cellcolor{myred} & \cellcolor{myblue} & \cellcolor{myblue} & \cellcolor{myblue} & \cellcolor{myred} 2 & \cellcolor{myblue} & \cellcolor{myred} 3 & \cellcolor{myblue} & \cellcolor{myblue} & \cellcolor{myblue} & \cellcolor{myblue} & \cellcolor{myblue} & \cellcolor{myblue} & \cellcolor{myblue} & \cellcolor{myblue} & \cellcolor{myblue} & \cellcolor{myred} 3 & \cellcolor{myblue} & \cellcolor{myblue} \\ \hline
\rule{0pt}{0.5cm}11 & \cellcolor{myblue} & \cellcolor{myred} & \cellcolor{myblue} & \cellcolor{myblue} & \cellcolor{myblue} & \cellcolor{myblue} & \cellcolor{myblue} & \cellcolor{myblue} & \cellcolor{myblue} & \cellcolor{myblue} & \cellcolor{myblue} & \cellcolor{myblue} & \cellcolor{myblue} & \cellcolor{myblue} & \cellcolor{myblue} & \cellcolor{myblue} & \cellcolor{myblue} & \cellcolor{myblue} & \cellcolor{myblue} & \cellcolor{myblue} \\ \hline
\rule{0pt}{0.5cm}10 & \cellcolor{myblue} & \cellcolor{myred} & \cellcolor{myred} & \cellcolor{myred} & \cellcolor{myred} 2 & \cellcolor{myblue} & \cellcolor{myblue} & \cellcolor{myblue} & \cellcolor{myblue} & \cellcolor{myblue} & \cellcolor{myblue} & \cellcolor{myblue} & \cellcolor{myblue} & \cellcolor{myblue} & \cellcolor{myred} 3 & \cellcolor{myblue} & \cellcolor{myblue} & \cellcolor{myblue} & \cellcolor{myblue} & \cellcolor{myred} 2 \\ \hline
\rule{0pt}{0.5cm}9 & \cellcolor{myblue} & \cellcolor{myred} & \cellcolor{myblue} & \cellcolor{myred} & \cellcolor{myblue} & \cellcolor{myred} 3 & \cellcolor{myblue} & \cellcolor{myblue} & \cellcolor{myblue} & \cellcolor{myblue} & \cellcolor{myblue} & \cellcolor{myblue} & \cellcolor{myblue} & \cellcolor{myblue} & \cellcolor{myblue} & \cellcolor{myblue} & \cellcolor{myblue} & \cellcolor{myred} 2 & \cellcolor{myblue} & \cellcolor{myblue} \\ \hline
\rule{0pt}{0.5cm}8 & \cellcolor{myblue} & \cellcolor{myred} & \cellcolor{myred} & \cellcolor{myred} 2 & ? & \cellcolor{myblue} & \cellcolor{myblue} & \cellcolor{myblue} & \cellcolor{myblue} & \cellcolor{myblue} & \cellcolor{myblue} & \cellcolor{myred} 3 & \cellcolor{myblue} & \cellcolor{myblue} & \cellcolor{myblue} & \cellcolor{myred} 2 & \cellcolor{myblue} & \cellcolor{myblue} & \cellcolor{myblue} & \cellcolor{myblue} \\ \hline
\rule{0pt}{0.5cm}7 & \cellcolor{myblue} & \cellcolor{myred} & \cellcolor{myred} & \cellcolor{myred} & \cellcolor{myblue} & \cellcolor{myred} & \cellcolor{myblue} & \cellcolor{myblue} & \cellcolor{myblue} & \cellcolor{myblue} & \cellcolor{myblue} & \cellcolor{myblue} & \cellcolor{myblue} & \cellcolor{myred} 2 & \cellcolor{myblue} & \cellcolor{myblue} & \cellcolor{myblue} & \cellcolor{myblue} & \cellcolor{myblue} & \cellcolor{myblue} \\ \hline
\rule{0pt}{0.5cm}6 & \cellcolor{myblue} & \cellcolor{myred} & \cellcolor{myred} 2 & \cellcolor{myred} 3 & \cellcolor{myblue} & \cellcolor{myblue} & \cellcolor{myred} & \cellcolor{myblue} & \cellcolor{myred} 3 & \cellcolor{myblue} & \cellcolor{myblue} & \cellcolor{myred} 2 & \cellcolor{myblue} & \cellcolor{myblue} & \cellcolor{myblue} & \cellcolor{myblue} & \cellcolor{myblue} & \cellcolor{myblue} & \cellcolor{myblue} & \cellcolor{myblue} \\ \hline
\rule{0pt}{0.5cm}5 & \cellcolor{myblue} & \cellcolor{myred} & \cellcolor{myred} & \cellcolor{myred} & \cellcolor{myblue} & \cellcolor{myblue} & \cellcolor{myblue} & ? & \cellcolor{myblue} & \cellcolor{myred} 2 & \cellcolor{myblue} & \cellcolor{myblue} &   \cellcolor{myblue} & \cellcolor{myblue} & \cellcolor{myblue} & \cellcolor{myblue} & \cellcolor{myblue} & \cellcolor{myblue} & \cellcolor{myblue} & \cellcolor{myblue} \\ \hline
\rule{0pt}{0.5cm}4 & \cellcolor{myblue} & \cellcolor{myred} & \cellcolor{myred} & \cellcolor{myblue} & \cellcolor{myred} & \cellcolor{myred} 3 & \cellcolor{myred} & \cellcolor{myred} 2 & \cellcolor{myred} & \cellcolor{myred} & \cellcolor{myblue} & \cellcolor{myblue} & \cellcolor{myblue} & \cellcolor{myred} & \cellcolor{myblue} & \cellcolor{myblue} & \cellcolor{myblue} & \cellcolor{myblue} & \cellcolor{myblue} & \cellcolor{myblue} \\ \hline
\rule{0pt}{0.5cm}3 & \cellcolor{myblue} & \cellcolor{myred} & \cellcolor{myblue} & \cellcolor{myred} & \cellcolor{myred} & \cellcolor{myred} 2 & \cellcolor{myred} & \cellcolor{myred} & \cellcolor{myblue} & \cellcolor{myred} & \cellcolor{myblue} & \cellcolor{myblue} & \cellcolor{myblue} & \cellcolor{myblue} & \cellcolor{myblue} & \cellcolor{myblue} & \cellcolor{myblue} & \cellcolor{myblue} & \cellcolor{myblue} & \cellcolor{myblue} \\ \hline
\rule{0pt}{0.5cm}2 & \cellcolor{myred} & \cellcolor{myred} & \cellcolor{myred} & \cellcolor{myred} & \cellcolor{myred} & \cellcolor{myred} & \cellcolor{myred} & \cellcolor{myred} & \cellcolor{myred} & \cellcolor{myred} & \cellcolor{myred} & \cellcolor{myred} & \cellcolor{myred} & \cellcolor{myred} & \cellcolor{myred} & \cellcolor{myred} & \cellcolor{myred} & \cellcolor{myred} & \cellcolor{myred} & \cellcolor{myred} \\ \hline
\rule{0pt}{0.5cm}1 & \cellcolor{myblue} & \cellcolor{myred} & \cellcolor{myblue} & \cellcolor{myblue} & \cellcolor{myblue} & \cellcolor{myblue} & \cellcolor{myblue} & \cellcolor{myblue} & \cellcolor{myblue} & \cellcolor{myblue} & \cellcolor{myblue} & \cellcolor{myblue} & \cellcolor{myblue} & \cellcolor{myblue} & \cellcolor{myblue} & \cellcolor{myblue} & \cellcolor{myblue} & \cellcolor{myblue} & \cellcolor{myblue} & \cellcolor{myblue} \\ \hline
\rule{0pt}{0.5cm}\slashbox{$p$}{$q$} &  1 & 2 & 3 & 4 & 5 & 6 & 7 & 8 & 9 & 10 & 11 & 12 & 13 & 14 & 15 & 16 & 17 & 18 & 19 & 20\\ \hline
\end{tabular}
}
\caption{Every blue cell corresponds to a positive pair. If $p\ge 3$ and $q\ge 3$, then we provide an explicit morphism constructing such words. Every red cell corresponds to a negative pair. A red cell with a $2$ means it lies on the $(t, 2t)$ line or on the $(2t, t)$ line and a red cell with a $3$ means it lies on the $(2t, 3t)$ line or on the $(3t, 2t)$ line. For the other red cells, the backtracking algorithm terminates.}
\label{table:pq_leq_20}
\end{center}
\end{table}

\section{Further remarks and open problems}\label{sec:further_remarks}

\paragraph{Non-coprime pairs}
In this work, we proved that for all but finitely many pairs of relatively prime integers $p$, $q \geq 3$, there exists an infinite ternary square-free word that is square-free modulo $p$ and $q$. In this setting, the two subsequences start at position $0$. For pairs of relatively prime integers, the starting positions do not matter as every relative difference between the two sequences will appear anyway. However, if we consider the set of all pairs of integers, then the starting positions of the subsequences matter. A problem more general than the one we studied is therefore the following.

\begin{question}
For which triples $(p, q, s)$ do there exist infinite ternary square-free words whose subsequences at positions congruent to $0$ modulo $p$ and to $s$ modulo $q$ are square-free?
\end{question}

In the simpler case $s = 0$, we already know that there are infinitely many negative pairs $(p, q)$, since the families $(t, 2t)_{t \geq 1}$ and $(2t, 3t)_{t \geq 1}$ consist entirely of negative pairs.
Moreover, in the proof of Theorem \ref{thm:psmall_qlarge}, we never use the hypothesis that $p$ and $q$ are relatively prime integers, so Theorem \ref{thm:psmall_qlarge} provides infinitely many positive pairs. In fact, in the proof of Theorem \ref{thm:psmall_qlarge} we do not need $s = 0$, so the same proof technique should provide infinitely many positive pairs for every possible value of $s$. 

\paragraph{k-uplets}

Another natural generalization of our problem is to consider more than two square-free subsequences.

\begin{question}
Let $k \geq 3$ be an integer. For which $k$-tuples of integers $(p_1, \dots, p_k)$ does there exist an infinite ternary square-free word that is square-free modulo $p_i$ for every $i \in \{1, \dots, k\}$?
\end{question}

In all our constructions, we start by choosing an infinite ternary square-free word $t$ that will be the subsequence modulo $q$ for some integer $q$ of an infinite ternary square-free word $w$. If $t$ is square-free modulo $q'$ as well, then $w$ will be square-free, square-free modulo $q$, and square-free modulo $qq'$. Thus, if we allow $p_1, \dots, p_k$ not to be pairwise relatively prime, then we can easily construct positive answers to this question by iterating our constructions. If we restrict ourselves to $k$-tuples of relatively prime integers, a more in-depth use of the technique from Theorem \ref{thm:pq_large} might work for small values of $k$.

A similar problem consists in considering an infinite number of subsequences. Let $(p_i)_{i \ge 0}$ be an increasing sequence of positive integers. Are there infinite ternary square-free words that are square-free modulo $p_i$ for every $i \geq 0$ ?
As previously observed, by chaining our constructions, we can construct such words. The difficulty seems to arise when we require that $p_i$ and $p_j$ be relatively prime for every $i > j \ge 0$.
\begin{question}
For which increasing sequences $(p_i)_{i \ge 0}$ of relatively prime integers do there exist infinite ternary square-free words $w$ such that for every $i \geq 0$, $\subsequence{w}{p_i}$ is square-free?
\end{question}

We already know that no such sequence exists with $p_0 = 2$. If there exists an increasing sequence $(p_i)_{i \ge 0}$ of relatively prime integers such that there is an infinite ternary square-free word that is square-free modulo $p_i$ for every $i$, what is the smallest possible value of $p_0$? Moreover, if the constraints are sparse, then the gaps between them should allow us to construct such words as we did in Section \ref{chap:pqlarge}.
\begin{question}
Do there exist integers $p$ and $k$ such that for every increasing sequence $(p_i)_{i \ge 0}$ of relatively prime integers with $p_0 = p$ and $p_{i+1} > kp_i$ for every $i \ge 0$, there exists an infinite ternary square-free word $w$ such that for every $i \geq 0$, $\subsequence{w}{p_i}$ is square-free?
\end{question}

\paragraph{All starting positions}
Square-free circular morphisms with images of size $p$ allow us to construct infinite ternary square-free words $w$ such that for every $\alpha \in \{0, \dots, p - 1\}$, the subsequence congruent to $\alpha$ modulo $p$ of $w$ is square-free. Such morphisms exist for every $p \in \{13, 17, 18, 19\} \cup \mathbb{N}_{\geq 23}$ by Proposition \ref{prop:currie2021infinite}. We can thus ask for the following strengthening.
\begin{question}
 For a given pair of relatively prime integers $(p, q)$, does there exist an infinite ternary square-free word $w$ such that for every $\alpha \in \{0, \dots, p - 1\}$, the subsequence congruent to $\alpha$ modulo $p$ of $w$ is square-free and the subsequence modulo $q$ of $w$ is square-free ?
\end{question}

The aforementioned morphisms together with Proposition \ref{prop:circular_morphism_sf_sfmodp} actually give positive answers to the previous question with the pairs $(p, q)$ with $p \in \{13, 17, 18, 19\} \cup \mathbb{N}_{\geq 23}$ and $q \ge 19p$.

\paragraph{Growth rate}

In Theorems \ref{thm:pq_large} and \ref{thm:psmall_qlarge}, the construction starts by choosing the word that will be the subsequence modulo $q$. It is well known that the language $\mathcal{L}$ of ternary square-free words is exponential in the sense that its complexity $C_\mathcal{L}(n) = f(n)\gamma^n$ for some sub-exponential function $f$ and some constant $\gamma$. Moreover, it is known that $$1.3017597< \lim\limits_{n \rightarrow \infty} C_\mathcal{L}(n)^{1/n} < 1.3017619$$  as given in \cite{SHUR2012187}. This means that if $\mathcal{L}_{p, q}$ is the language of ternary words that are square-free, square-free modulo $p$, and square-free modulo $q$ for a pair $(p, q)$ covered by Theorem \ref{thm:pq_large} or Theorem \ref{thm:psmall_qlarge}, then, $C_{\mathcal{L}_{p, q}}(n) \ge (\gamma^{1/q})^n$ and so, $\mathcal{L}_{p, q}$ is an exponential language. However, this bound is most likely not tight.
Consider instead the language $\mathcal{F}_{p, q}$ of factors of $\mathcal{L}_{p, q}$. By definition, this language is factor-closed and it is a classical consequence that $\lim\limits_{n \rightarrow \infty} C_{\mathcal{F}_{p, q}}(n)^{1/n}
= \inf\limits_{n}C_{\mathcal{F}_{p, q}}(n)^{1/n}$ is defined. Note that for all positive $p$, $q$, and $n$,
$\frac{C_{\mathcal{F}_{p, q}}(n)}{pq}\le C_{\mathcal{L}_{p, q}}(n)\le C_{\mathcal{F}_{p, q}}(n)$, hence the growth rate $\lim\limits_{n \rightarrow \infty} C_{\mathcal{L}_{p, q}}(n)^{1/n}=\lim\limits_{n \rightarrow \infty} C_{\mathcal{F}_{p, q}}(n)^{1/n}$ is also defined. This raises the following questions :

\begin{question}
Is it true that if there exists an infinite ternary square-free word that is square-free modulo $p$ and $q$, then $C_{\mathcal{L}_{p, q}}$ is an exponential language ? More precisely, determine the value of $\lim\limits_{n \rightarrow \infty} C_{\mathcal{L}_{p, q}}(n)^{1/n}$, for all $p$ and $q$. \end{question}

\bibliographystyle{plain}
\bibliography{biblio} 

@article{ArithmeticProgressions,
	author = {Currie, James and Harju, Tero and Ochem, Pascal and Rampersad, Narad},
	title = {Some further results on squarefree arithmetic progressions in infinite words},
	journal = {Theoret. Comput. Sci.},
	volume = {799},
	pages = {140--148},
	year = {2019},
	month = dec,
	issn = {0304-3975},
	publisher = {Elsevier},
	doi = {10.1016/j.tcs.2019.10.006}
}

@article{rosenfeld2020far,
  title={How far away must forced letters be so that squares are still avoidable?},
  author={Rosenfeld, Matthieu},
  journal={Mathematics of Computation},
  volume={89},
  number={326},
  pages={3057--3071},
  year={2020}
}

@article{currie2012infinite,
    title={Infinite ternary square-free words concatenated from permutations of a single word},
    author={Currie, James},
    journal = {Theoretical Computer Science},
    volume = {482},
    pages = {1-8},
    year = {2013},
}

@article{thue1906uber,
  title={Uber unendliche zeichenreihen},
  author={Thue, Axel},
  journal={Norske Vid Selsk. Skr. I Mat-Nat Kl.(Christiana)},
  volume={7},
  pages={1--22},
  year={1906}
}

@article{SHUR2012187,
title = {Growth properties of power-free languages},
journal = {Computer Science Review},
volume = {6},
number = {5},
pages = {187-208},
year = {2012},
issn = {1574-0137},
doi = {https://doi.org/10.1016/j.cosrev.2012.09.001},
url = {https://www.sciencedirect.com/science/article/pii/S1574013712000330},
author = {Shur, Arseny}
}

@article{HARJU201995,
title = {On square-free arithmetic progressions in infinite words},
journal = {Theoretical Computer Science},
volume = {770},
pages = {95-100},
year = {2019},
issn = {0304-3975},
doi = {https://doi.org/10.1016/j.tcs.2018.09.032},
url = {https://www.sciencedirect.com/science/article/pii/S0304397518306030},
author = {Harju, Tero}
}

@article{CROCHEMORE1982221,
title = {Sharp characterizations of squarefree morphisms},
journal = {Theoretical Computer Science},
volume = {18},
number = {2},
pages = {221-226},
year = {1982},
issn = {0304-3975},
doi = {https://doi.org/10.1016/0304-3975(82)90023-8},
url = {https://www.sciencedirect.com/science/article/pii/0304397582900238},
author = {Crochemore, Maxime}
}

@article{DEKKING2023102536,
title = {Two-block substitutions and morphic words},
journal = {Advances in Applied Mathematics},
volume = {148},
pages = {102536},
year = {2023},
issn = {0196-8858},
doi = {https://doi.org/10.1016/j.aam.2023.102536},
url = {https://www.sciencedirect.com/science/article/pii/S0196885823000544},
author = {Michel Dekking and Michael Keane}
}

@article{HARJU_palindromes,
title = {A note on short palindromes in square-free words},
journal = {Theoretical Computer Science},
volume = {562},
pages = {658-659},
year = {2015},
issn = {0304-3975},
doi = {https://doi.org/10.1016/j.tcs.2014.10.040},
url = {https://www.sciencedirect.com/science/article/pii/S0304397514008329},
author = {Tero Harju and Mike Müller}
}

@article{CURRIE_palindromes,
title = {Palindrome positions in ternary square-free words},
journal = {Theoretical Computer Science},
volume = {396},
number = {1},
pages = {254-257},
year = {2008},
issn = {0304-3975},
doi = {https://doi.org/10.1016/j.tcs.2007.09.015},
url = {https://www.sciencedirect.com/science/article/pii/S0304397507006883},
author = {James D. Currie}
}

@article{BRESAR_nonrepetitive_coloring_in_trees,
title = {Nonrepetitive colorings of trees},
journal = {Discrete Mathematics},
volume = {307},
number = {2},
pages = {163-172},
year = {2007},
issn = {0012-365X},
doi = {https://doi.org/10.1016/j.disc.2006.06.017},
url = {https://www.sciencedirect.com/science/article/pii/S0012365X06004778},
author = {B. Brešar and J. Grytczuk and S. Klavžar and S. Niwczyk and I. Peterin}
}

@misc{code,
  title = {Pairs of square free arithmetic progressions in infinite words https://github.com/ThomasDelepine/Pairs-of-square-free-arithmetic-progressions-in-infinite-words},
  author = {Thomas Del\'epine and Pascal Ochem and Matthieu Rosenfeld},
  year = {2026},
  howpublished = {GitHub repository},
  url = {https://github.com/ThomasDelepine/Pairs-of-square-free-arithmetic-progressions-in-infinite-words},
  urldate = {2026-01-27}
}

\appendix
\section{Appendix}
\begin{table}[H]
    \footnotesize
    \centering
    \begin{tabular}{ |C{0.25cm}|p{12cm}|C{0.25cm}|C{0.25cm}|p{0.8cm}| }
 \hline
 $p$ & $h(0)$ & $k$ & $\alpha$ & $q \ge$\\
 \hline
 $3$  & 012021201210201021012102012021012102120102012102120210 & $18$ & $1$ & $1080$\\
 \hline
 $4$  & 01202120102101201021201210120210201202101210201210120210121020120210 & $17$ & $0$ & $1360$\\
 \hline
 $5$  & 01210201021012021201020121012021012102010212021012102012021201210 & $13$ & $0$ & $1300$\\
 \hline
 $7$  & 01202102010210120102120121020102120102012021020121012021012102120102-
 10120102120121020120210 & $13$ & $0$ & $1820$\\
 \hline
 $8$  & 01202120102012102010212012101201020120212010201210120210201202120102-
        101210201021201210201021012102120210 & $13$ & $0$ & $2080$\\
 \hline
 $9$  & 01202102010210121020102120210120102101202120121012010210121020120210-
        1201020120210121020120212010210121021201020120210 & $13$ & $0$ & $2340$\\
 \hline
 $10$  & 01202120121020120210201021201210201202120102120210201021201210201202-
        10201021202102012021201020120210201021201210201202120102120210
 & $13$ & $0$ & $2600$\\
 \hline
 $11$  & 01202102010212021020121021201020120212012102010212010201210212021020-
         12021201210201202102012102120102012102010212012102120210201202120102-
         0120210
 & $13$ & $0$ & $2860$\\
 \hline
 $12$  & 01202120121012021012102120121020120210121021202101202120121020120210-
         20121012021012102012021201210120210121021201210120212012102012021201-
         21012021012102120210
 & $13$ & $0$ & $3120$\\
 \hline
 $14$  & 01202120121020120210121021202101202120121012021012102012021020121021-
         20210120212012101202101210212021020120210121020121012021201210201202-
         1201210120210121021201210201202102012102120210
 & $13$ & $0$ & $3640$\\
 \hline
 $15$  & 01210212012102012021020121012021012102120121012021012102012101202102-
         01210212012101202120121021202101210212012101202101210201202102012101-
         20210121020120212012102120210121020121012021020121021201210
 & $13$ & $0$ & $3900$\\
 \hline
 $16$  & 01210212012101201020121020102101201020121012010210121020121012010201-
         21020102120121012010210121021201020121012010210121020102101201020121-
         02010210121021201020121020102120121021201020121012010212012101201020-
         1210
 & $13$ & $0$ & $4160$\\
 \hline
 $20$  & 01210212012101202120121020120210121021202101202120121012021012102012-
         02102012101202101210201210120210201202101210212012101202101210201202-
         12012101202101210212012101202102012021012102120121012021012102012021-
         20121012021020121021201210120212012102120210121021201210
 & $13$ & $0$ & $5200$\\
 \hline
 $21$  & 01210212012101202101210201202102012101202120121020120210201210212012-
         10120210121020120210201210120210201202101210201202102012101202101210-
         20121012021201210201202101210212012101202101210201202102012101202101-
         210201202120121012021012102120121012021201210212021012102012021201210
 & $13$ & $0$ & $5460$\\
 \hline
 $22$  & 01210212012101202120121020120210121021201210201202102012101202101210-
         20120210201210212021012021201210120210121020120210201210120210121020-
         12021201210120210121020121012021020120210121020120212012101202101210-
         20120210201210212012101202101210201210120210201210212021012102120121-
         02012021201210
 & $13$ & $0$ & $5720$\\
 \hline
\end{tabular}
    \caption{Morphisms for Proposition \ref{prop:sparse_p_pairs}. Each line 
    contains: $p$, the morphism, the parameters $k$ and $\alpha$ of Proposition \ref{prop:sparse_p_pairs}, and the value of $q$ such that for all $q\geq Q$ there exists an infinite ternary square-free word that is square-free modulo $p$ and $q$.}
    \label{fig:morphisms_cone_remaining_cases}
\end{table}

\end{document}